\theoremstyle{plain}
\newtheorem{thm}{Theorem}[section]		
\newtheorem{prop}[thm]{Proposition}
\newtheorem{cor}[thm]{Corollary}
\newtheorem{lem}[thm]{Lemma}
\theoremstyle{definition}
\newtheorem{df}{Definition}[section]
\newtheorem{exam}{Example}[section]
\theoremstyle{remark}
\newtheorem{rmk}{Remark}[section]
\newtheorem*{ac}{Acknowledgements}
\newcommand{\nn}{\mathbb{N}}
\newcommand{\zz}{\mathbb{Z}}
\newcommand{\qq}{\mathbb{Q}}
\newcommand{\rr}{\mathbb{R}}
\DeclareMathOperator{\card}{card}
\DeclareMathOperator{\pc}{\mathscr{P}}
\DeclareMathOperator{\nai}{INT}
\DeclareMathOperator{\seq}{Seq}
\DeclareMathOperator{\cycl}{Cycl}
\DeclareMathOperator{\conv}{\mathcal{CC}}
\DeclareMathOperator{\cm}{Comp}
\newcommand{\met}[1]{\mathrm{Met}(#1)}
\newcommand{\metdis}{\mathcal{D}}
\newcommand{\yopset}{\mathrm{PM}}
\newcommand{\yolset}[1]{\mathrm{L}_{#1}}
\newcommand{\yoeset}[1]{\mathrm{E}_{#1}}
\newcommand{\yodset}[1]{B(#1)}
\newcommand{\yokset}[1]{K(#1)}
\begin{document}

\title[An interpolation of metrics and spaces of metrics]
{An Interpolation of Metrics and Spaces of Metrics}

\author[Yoshito Ishiki]
{Yoshito Ishiki}

\address[Yoshito Ishiki]{
Department of Mathematical Sciences
\endgraf
Tokyo Metropolitan University
\endgraf
Minami-osawa Hachioji Tokyo 192-0397
\endgraf
Japan}

\email{ishiki-yoshito@tmu.ac.jp}

\date{\today}
\subjclass[2020]{Primary 54E35; Secondary 54E52}
\keywords{Metric space, Baire space. Space of metrics}

\begin{abstract}
As a generalization of Hausdorff's extension theorem of metrics, we prove  an interpolation  theorem of a family of  metrics defined on closed subsets of metrizable spaces. As an application, we investigate typicality of subsets of moduli spaces of metrics. We observe that various  sets of all metrics with properties appearing  in metric geometry are dense intersections of countable open subsets  in spaces of metrics on metrizable spaces. For instance, our study is applicable to the set of all non-doubling metrics and the set of all non-uniformly disconnected metrics. 
\end{abstract}
\maketitle

\section{Introduction}
\subsection{Backgrounds}
In 1930, 
Felix 
Hausdorff 
\cite{Ha1930} 
proved the extension theorem stating  that for every metrizable space
 $X$, 
 for every closed subset 
 $A$ 
 of 
 $X$ 
 and for every 
metric 
$d$ 
on 
$A$
generating the same topology of 
$A$, 
there exists 
a metric 
$D$ 
on 
$X$ 
such that 
$D$ generates the same topology of 
$X$ and 
$D|_{A^2}=d$ 
(see Theorem 
\ref{thm:Hausdorff}). 
Motivated by this result, the author thought that Hausdorff's theorem suggested the study of spaces of metrics and tried to investigate their topological properties.

For a metrizable space 
$X$, 
we denote by 
$\met{X}$
 the set of all metrics on 
 $X$ 
generating 
the same topology of 
$X$. 
We define a function 
$\metdis_{X}: \met{X}\times \met{X}\to [0, \infty]$
 by 
 \[
\metdis_{X}(d, e)=\sup_{(x, y)\in X^2}|d(x, y)-e(x, y)|. 
 \]
The function 
$\metdis_{X}$
 is a metric on 
 $\met{X}$ 
 valued in 
 $[0, \infty]$. 
Throughout this paper, 
we consider that 
$\met{X}$
 is  always equipped with the topology generated by 
$\metdis_{X}$. 
To investigate 
topological properties of 
$\met{X}$, 
we
first  generalize the Hausdorff extension theorem to an interpolation theorem  of metrics with an approximation by 
$\metdis_{X}$
(Theorem
 \ref{thm:interpolate} or 
\ref{thm:ext241009}). 
As applications of it, 
for a
certain 
 property 
$\mathcal{P}$ on metric spaces, 
we prove that 
the set set 
$\{\, d\in \met{X}\mid\text{$(X, d)$ satisfies $\mathcal{P}$}\, \}$
is generic in 
$\met{X}$
(see Theorems \ref{thm:trans} and \ref{thm:loctrans}). 
This result 
can be considered as an analogue of
 Banach's famous result, which states 
  that 
in the continuous  function space on $[0, 1]$, 
a generic function is nowhere differentiable. 

Since a metric on 
$X$ 
determines the geometry on
 $X$, 
 studying the moduli space 
 $\met{X}$ 
 of metrics is equivalent to investigating the moduli of geometries that can be expanded   on the space
  $X$.

\subsection{Main results}

\subsubsection{Interpolation of metrics}
A family 
$\{S_{i}\}_{i\in I}$ 
of subsets of a topological space 
$X$ 
is said to be 
\emph{discrete} if for every 
$x\in X$
 there exists a neighborhood of
  $x$ intersecting at most single member of 
 $\{S_{i}\}_{i\in I}$. 

Our  first main results is the following interpolation theorem: 
\begin{thm}\label{thm:interpolate}
Let 
$X$ 
be a metrizable space, and let 
$\{A_{i}\}_{i\in I}$ 
be a discrete family of closed subsets of 
$X$. 
Then for every metric  
$d\in \met{X}$,   
and for every 
family 
$\{e_{i}\}_{i\in I}$ 
of metrics with 
$e_{i}\in \met{A_{i}}$, 
 there exists a metric 
 $m\in \met{X}$ 
 satisfying the following: 
\begin{enumerate}
\item for every 
$i\in I$ 
we have 
$m|_{A_{i}^2}=e_{i}$;
\item 
$\metdis_{X}(m, d)= \sup_{i\in I}\metdis_{A_{i}}(e_{A_{i}}, d|_{A_{i}^2})$. 
\end{enumerate}
Moreover, if 
$X$ 
is completely metrizable,  
and if each 
$e_{i}\in \met{A_{i}}$ 
is  a complete metric, then we can choose 
$m\in \met{X}$ 
as a complete one. 
\end{thm}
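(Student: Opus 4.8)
Write $\delta=\sup_{i\in I}\mathcal{D}_{A_i}(e_i,d|_{A_i^2})$. Condition (1) already forces $\mathcal{D}_X(m,d)\ge\delta$ (restrict to any $A_i$), and if $\delta=\infty$ then (1) alone makes $\mathcal{D}_X(m,d)=\infty$, so the whole content is to produce, assuming $\delta<\infty$, a metric $m\in\met(X)$ realising (1) together with the reverse bound $\mathcal{D}_X(m,d)\le\delta$, and to do so with a complete $m$ when the data are complete. The plan is to split the argument into a \emph{gluing} step, which packages the family $\{e_i\}$ into a single metric on the closed set $A=\bigcup_{i}A_i$, and a \emph{controlled extension} step, which pushes that metric out to all of $X$ without enlarging the $\mathcal{D}$-distance to $d$.

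For the gluing step I would first record that discreteness makes $A$ closed, the $A_i$ pairwise disjoint, and each $A_i$ clopen in $A$; thus $A$ is the topological sum of the $A_i$, and for every fixed $x\in A_i$ one has $d(x,A\setminus A_i)>0$. On $A$ I define $e$ to be the metric closure $\cl(\gamma_0)$ of the symmetric function $\gamma_0$ given by $e_i$ on each $A_i^2$ and by $d(\cdot,\cdot)+\delta$ across two distinct components. The point is that $\cl(\gamma_0)$ restricts to exactly $e_i$: any chain joining two points of $A_i$ and leaving $A_i$ must cross the boundary at least twice, contributing $2\delta$, which dominates the at-most-$\delta$ gap between $d$ and $e_i$, so no excursion beats the direct $e_i$-distance. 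A bookkeeping of the same chains shows $d-\delta\le e\le d+\delta$ on all of $A^2$, whence $\mathcal{D}_A(e,d|_{A^2})=\delta$; and the inequality $d(x,A\setminus A_i)>0$ gives that $e$ generates the sum topology. When $d$ and all $e_i$ are complete, $e$ is complete because an $e$-Cauchy sequence is eventually trapped in a single component.

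The controlled extension step is the crux, and I would isolate it as a lemma: for a closed $A\subseteq X$, $d\in\met(X)$ and $e\in\met(A)$ with $\mathcal{D}_A(e,d|_{A^2})=\delta<\infty$, there is $m\in\met(X)$ with $m|_{A^2}=e$ and $\mathcal{D}_X(m,d)=\delta$. Theorem \ref{thm:Hausdorff} already supplies \emph{some} extension but gives no control on $\mathcal{D}_X$, so I would instead take $m=\cl(\gamma)$ for a ceiling $\gamma$ that equals $e$ on $A^2$ and, on pairs meeting $X\setminus A$, equals $d$ plus a correction term. The correction must interpolate between its prescribed boundary values $e-d\in[-\delta,\delta]$ along $A$ and genuinely small values on short edges lying off $A$; I would build it from the distance functions $r(z)=d(z,A)$ so that it stays in $[-\delta,\delta]$, is continuous with vanishing diagonal, and is large enough that no chain through the exterior can undercut $e$ between two points of $A$. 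Granting such a $\gamma$, the closure $m$ is automatically a pseudometric with $m\le\gamma$, hence $m\le d+\delta$; the matching lower bound $m\ge d-\delta$ and the identity $m|_{A^2}=e$ follow from the chain estimates, continuity of the correction (with $r>0$ off $A$) yields that $m$ generates the topology of $d$, and $m$ is therefore the desired metric.

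The single genuine obstacle is the design of this correction: one must reconcile, simultaneously and uniformly near the boundary of $A$, the exact pinning $m|_{A^2}=e$, the \emph{tight} bound $\mathcal{D}_X(m,d)=\delta$ (a crude $+\delta$ penalty on every exterior edge only yields $2\delta$ and, worse, separates $A$ from its exterior at scale $\delta$, destroying the topology), and the reproduction of the original topology (which forces the correction to die at small scales). I expect this to be where the real work lies, and I would handle completeness last: given an $m$-Cauchy sequence, one shows it is $d$-Cauchy by treating separately the tail that stays within a fixed neighbourhood of $A$ (using completeness of $e$ and the pinning) and the tail bounded away from $A$ (using completeness of $d$ together with the small-scale vanishing of the correction), so that $m$ inherits completeness from $d$ and $e$.
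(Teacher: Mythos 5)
Your two-step architecture---glue the $e_i$ into a single metric $e$ on $A=\bigcup_{i\in I}A_i$ with $\mathcal{D}_A(e,d|_{A^2})=\delta$, then extend $e$ over $X$ with exact $\mathcal{D}_X$-control---is a legitimate reduction, and the gluing half can be made to work; it parallels the paper's own amalgamation machinery (Propositions \ref{prop:GH}--\ref{prop:amal2} and Lemmas \ref{lem:am}, \ref{lem:key}). Be aware, though, that your bookkeeping as stated is too weak: counting $+\delta$ per crossing and $-\delta$ per within-component edge only yields chain cost $\ge d(x,y)-\delta\ge e_i(x,y)-2\delta$ for $x,y\in A_i$; to recover $m|_{A_i^2}=e_i$ one must argue per excursion, using that the portion of a chain strictly between the two crossings undercuts $d$ by at most $\delta$ \emph{in total}, so each excursion costs at least $d(a,b)+\delta\ge e_i(a,b)$ and can be replaced by the direct edge.

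The genuine gap is the ``controlled extension'' lemma (which is the paper's Corollary \ref{cor:ext}---derived there \emph{from} Theorem \ref{thm:interpolate}, not used to prove it): you list its desiderata, correctly identify the tension between pinning, tightness, and topology, and then stop, conceding that this is ``where the real work lies.'' That is precisely the hard content of the theorem, and your proposed device---a correction built from $r(z)=d(z,A)$---cannot supply it. For the ceiling $\gamma$ to induce the original topology and satisfy $\gamma=e$ on $A^2$, continuity forces $\gamma(x,u)\to e(x,a)$, i.e. the correction $c(x,u)\to e(x,a)-d(x,a)$, as an exterior point $u$ approaches $a\in A$; so near $A$ the correction must know \emph{which} part of $A$ the point $u$ is close to and what $e(x,\cdot)-d(x,\cdot)$ does there. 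A correction depending only on the scalars $d(x,u)$, $r(x)$, $r(u)$ assigns equal values to $u$ near $a$ and $u'$ near $a'$ whenever these scalars agree, yet if $e-d$ is nonconstant along $A$ (say $e(x,a)-d(x,a)=\delta$ while $e(x,a')-d(x,a')=-\delta$), pinning plus topology force different limits---a contradiction. In other words, the correction is itself a continuous extension problem for \emph{function-valued} boundary data, and some selection or simultaneous-extension theorem is unavoidable; this is exactly what the paper deploys, embedding the amalgam $(Z,h)$ into a Banach space by Theorem \ref{thm:kur}, extending the boundary map over $X$ inside the balls $B(H(x),\eta/2)$ via the Michael selection theorem \ref{thm:Michael} together with the partition of unity of Corollary \ref{cor:pot2}, and restoring injectivity and the topology with the auxiliary coordinate $l=\min\{r,\eta/2\}$ (with $r$ from Theorem \ref{thm:Hausdorff}), so that $m(x,y)=\|E(x)-E(y)\|$. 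Note finally that even your claimed lower bound $m\ge d-\delta$ does not ``follow from the chain estimates'': with a correction merely valued in $[-\delta,\delta]$, an $n$-edge chain can a priori undercut $d$ by $n\delta$, whereas the paper gets the two-sided bound for free from $|m(x,y)-d(x,y)|\le\|E(x)-K(x)\|+\|E(y)-K(y)\|\le\eta$.
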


\begin{rmk}
For every  metrizable space 
$X$, 
and for every closed subset 
$A$ 
of 
$X$, 
Nguyen Van Khue and Nguyen To Nhu 
\cite{NN1981} 
constructed  
a Lipschitz metric extensor from 
$(\met{A}, \metdis_{A})$ 
into 
$(\met{X}, \metdis_{X})$, 
and a monotone continuous 
metric  extensor from 
$\met{A}$ 
into 
$\met{X}$;
moreover, 
if 
$X$ 
is completely metrizable, 
then each of these metric extensors maps any complete metric in 
$\met{A}$ 
into  a complete metric in 
$\met{X}$.  
To obtain 
such metric extensors, they used the Dugundji extension theorem concerning locally convex topological linear spaces. 
For more information of simultaneous extensions of 
metrics, see  
\cite{MR1751152}, 
\cite{MR3712976}, 
 \cite{MR1455501}, 
\cite{MR2054807}, 
\cite{MR2761694},
\cite{MR2857823}, 
\cite{MR2957499}, 
and 
\cite{MR2857999}. 
It seems difficult to prove 
Theorem 
\ref{thm:interpolate}
using these extension result because 
they do not preserve a given metric 
$d$. 
\end{rmk}

A central idea of the proof of Theorem \ref{thm:interpolate}
is a correspondence between a metric on a metrizable space and a topological embedding from a metrizable space into 
a Banach space. 
A metric 
$d$ 
 on a metrizable space 
 $X$ 
 induces a topological embedding 
from 
$X$ 
into a Banach space such as
 the Kuratowski embedding  
 (see Theorem \ref{thm:kur}). 
Conversely, a topological embedding 
$F$ 
from a metrizable space 
$X$ 
into a Banach space 
$V$ 
with norm 
$\|\cdot\|_V$ 
induces a metric 
$m\in \met{X}$ 
on 
$X$ 
defined by 
$m(x, y)=\|F(x)-F(y)\|_{V}$. 
In the proof of Theorem 
\ref{thm:interpolate}, 
we utilize this correspondence to translate
 the statement of Theorem 
 \ref{thm:interpolate} into an approximation  problem on 
 topological embeddings into a Banach space. 
We then  resolve such a problem by using  the Michael continuous selection theorem 
(see Theorem \ref{thm:Michael}), 
and by using a similar method to 
Kuratowski 
\cite{Ku1938} 
(see also 
\cite{Hausdorff1938}
 and
  \cite{MR0049543})
of converting a continuous function
 into a topological embedding by extending a codomain.

\subsubsection{Moduli spaces of metrics}
Theorem \ref{thm:interpolate} enables 
us to investigate  dense 
$G_{\delta}$ 
subsets in the topology of the space 
$(\met{X}, \metdis_{X})$ 
for a metrizable space 
$X$. 
To  describe our second result precisely, 
we define  a class of geometric properties that unifies  various properties appearing  
 in metric geometry. Due to this purpose, 
 the definition may seem somewhat complicated at first glance.

Let  
$\mathcal{P}^*(\nn)$ 
be  the set of all non-empty subsets of  
$\nn$. 
For a  topological space 
$T$, 
we denote by 
$\mathcal{F}(T)$ 
the set of all closed subsets of 
$T$. 
For a subset 
$W\in \mathcal{P}^*(\nn)$, 
and for a  set 
$S$, 
we denote by 
$\seq(W, S)$ 
the set of all finite injective sequences 
$\{a_i\}_{i=1}^n$ 
in  
$S$ with 
$n\in W$.

\begin{df}\label{def:transp}
Let 
$Q$ 
be an at most countable set, 
$P$ 
a topological space. 
Let 
$F: Q\to \mathcal{F}(P)$ 
and 
$G: Q\to \mathcal{P}^*(\nn)$ 
be maps.
Let 
$Z$ 
be a set.  
Let 
$\phi$ 
be a correspondence 
assigning a pair 
$(q, X)$  of 
$q\in Q$ 
and a metrizable space 
$X$ 
to a map 
$\phi^{q, X}:\seq(G(q), X)\times Z\times \met{X} \to P$. 
We say that 
a sextuple 
$(Q, P, F, G, Z, \phi)$ 
is 
a \emph{transmissible paremeter}
if 
for every metrizable space 
$X$, 
for every $q\in Q$, 
and
 for every 
 $z\in Z$  
 the following are satisfied:
\begin{enumerate}[label=\textup{(TP\arabic*)}]

	\item\label{item:tp1}
	for every 
	$a\in \seq(G(q), X)$, 
	and for every 
	$z\in Z$, 
	the map 
	$\phi^{q, X}(a, z):\met{X}\to P$ 
	defined by 
	$\phi^{q, X}(a, z)(d)=\phi^{q, X}(a, z, d)$ 
	is continuous;

	\item\label{item:tp2}
	 for every  
	 $d\in \met{X}$, 
	if  $S$ is a subset of $X$ and 
	$a\in \seq(G(q), S)$, 
	 then we have 
	$\phi^{q, X}(a, z, d)=\phi^{q, S}(a, z, d|_{S^2})$. 
	\end{enumerate}
\end{df}
The diameter 
$\phi^{q, X}(a, z, X)=\delta_{d}(\{a_{i}\}_{i=1}^{n})$ is a typical example 
of  a map appearing in  
the condition \ref{item:tp1}.

We introduce a property determined by a transmissible parameter. 
\begin{df}
Let 
$\mathfrak{G}=(Q, P, F, G, Z, \phi)$ 
be a transmissible parameter. 
We say that a metric space 
$(X, d)$ 
satisfies 
the 
\emph{$\mathfrak{G}$-transmissible  property} 
if 
there exists  
$q\in Q$ 
such that 
for every 
$a\in \seq(G(q), X)$ 
and for every 
$z\in Z$ 
we have 
$\phi^{q, X}(a, z, d)\in F(q)$. 
We say that 
$(X, d)$ 
satisfies the 
\emph{anti-$\mathfrak{G}$-transmissible property} 
if 
$(X, d)$  
satisfies the negation of the 
$\mathfrak{G}$-transmissible property; 
namely, 
for every 
$q\in Q$ 
there exist 
$a\in \seq(G(q), X)$ 
and 
$z\in Z$ 
with 
$\phi^{q, X}(a, z, d)\in X\setminus F(q)$. 
A property on metric spaces is  a 
\emph{transmissible property} (resp.~\emph{anti-transmissible property})
if it is equivalent to a 
$\mathfrak{G}$-transmissible property 
(resp.~anti-$\mathfrak{G}$-transmissible property) 
for some
transmissible parameter 
$\mathfrak{G}$. 
\end{df}

The class of transmissible properties contains various properties appeared in metric geometry. 
\begin{exam}\label{exam:prop}
The following properties on metric spaces are transmissible properties  
(see Section \ref{sec:trans}). 
\begin{enumerate}[label=\textup{(\arabic*)}]
\item\label{item:doubling1010}
 the doubling property; 

\item  
the uniform disconnectedness; 

\item 
satisfying the ultratriangle inequality; 

\item  
satisfying the Ptolemy inequality; 

\item\label{item:gcyc1010}
 the Gromov 
$\cycl_{m}(0)$  condition;

\item the Gromov hyperbolicity. 
\end{enumerate}
\end{exam}

To state  our second result, we need a more additional condition for  transmissible properties. 
\begin{df}
Let 
$\mathfrak{G}=(Q, P, F, G, Z, \phi)$ 
be a transmissible parameter. 
We say that 
$\mathfrak{G}$ 
is 
\emph{singular} 
if for each 
$q\in Q$ 
and for every 
$\epsilon\in (0, \infty)$
there exist 
$z\in Z$ 
and  a finite metrizable  space 
$(L, d_{L})$ 
such that 
\begin{enumerate}

	\item 
	$\delta_{d_{L}}(L)\le \epsilon$, 
	where 
	$\delta_{d_{L}}(L)$ 
	stands for the diameter of $L$; 
	
	\item 
	$\card(L)\in G(q)$, 
	where 
	$\card$ 
	stands for the cardinality; 
	
	\item 
	$\phi^{q, L}(L, z, d_{L})\in X\setminus F(q)$. 
\end{enumerate}
\end{df}
As we will  see in 
Lemma \ref{lem:sing}, 
the singularity of  a transmissible parameter
$\mathfrak{G}$  is 
equivalent  to the existence of 
a special countable metric subspace  that 
satisfies  the anti-$\mathfrak{G}$-transmissible property. 

Note that not all transmissible parameters are singular;
especially, 
the Gromov hyperbolicity 
does not have
 a singular transmissible  parameter (see Proposition \ref{prop:Gro}).

Due to Theorem 
\ref{thm:interpolate}, 
we obtain the second main result on dense 
$G_{\delta}$ 
subsets in spaces of metrics:
\begin{thm}\label{thm:trans}
Let 
$\mathfrak{G}$ 
be a singular transmissible parameter. 
For every non-discrete metrizable space $X$, 
the set of all 
$d\in \met{X}$ 
for which 
$(X, d)$ 
satisfies the
anti-$\mathfrak{G}$-transmissible property
 is dense 
 $G_{\delta}$ in $\met{X}$.  
\end{thm}

\begin{rmk}
Theorem 
\ref{thm:trans} 
holds true for the 
space 
$\cm(X)$ 
of all complete metrics in 
$\met{X}$ 
(see Theorems \ref{thm:cm1}). 
\end{rmk}

We can apply Theorem 
\ref{thm:trans} 
to the properties
\ref{item:doubling1010}--\ref{item:gcyc1010}
mentioned  in Example \ref{exam:prop}. 
Therefore we conclude
 that
  the set of all metrics not satisfying these properties 
are dense 
$G_{\delta}$ 
in spaces of metrics 
(see 
Theorem \ref{thm:dandud:241018} 
and Corollary \ref{cor:inqopen}). 
We also conclude that the set of all metrics with rich pseudo-cones is 
dense 
$G_{\delta}$ 
in spaces of metrics 
(see Theorem \ref{thm:richcones}).

Our third result is based on the fact that for 
every 
second-countable locally compact space 
$X$, 
the space 
$\met{X}$ 
is  Baire 
(see Lemma 
\ref{lem:Baire}). 
For a property 
$P$ 
on metric spaces, 
we say that a metric space 
$(X, d)$ 
satisfies the 
\emph{local $P$} 
if 
every non-empty open metric  subspace of 
$X$ 
satisfies the property 
$P$.

As a local version of Theorem 
\ref{thm:trans}, 
we obtain the following:
\begin{thm}\label{thm:loctrans}
Let 
$X$ 
be a second-countable, 
locally compact locally non-discrete space. 
Then for every singular transmissible parameter 
$\mathfrak{G}$, 
the set of all metrics 
$d\in \met{X}$ 
for which 
$(X, d)$ 
satisfies 
the local 
anti-$\mathfrak{G}$-transmissible property
 is dense 
 $G_{\delta}$ 
 in 
 $\met{X}$. 
\end{thm}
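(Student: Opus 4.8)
The plan is to reduce the quantifier ``for every non-empty open subspace'' to a countable one by passing to a countable base, to exhibit the resulting set as a countable intersection of dense open subsets of $\met(X)$, and then to invoke the Baire property of $\met(X)$ (Lemma \ref{lem:Baire}), which is available because $X$ is second countable and locally compact. Throughout I fix a countable base $\{U_n\}_{n\in\nn}$ consisting of non-empty open subsets of $X$; since $X$ is locally non-discrete, i.e.\ has no isolated point, each $U_n$ is a non-discrete metrizable space.

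The key step is a base reduction: a metric $d$ satisfies the local anti-$\mathfrak{G}$-transmissible property if and only if $(U_n, d|_{U_n^2})$ satisfies the anti-$\mathfrak{G}$-transmissible property for every $n$. The nontrivial implication rests on the fact that the anti-$\mathfrak{G}$ property is inherited by larger subspaces. Indeed, if $U_n\subseteq V$ and, for a given $q$, the subspace $(U_n,d|_{U_n^2})$ admits a witness $a\in\seq(G(q),U_n)$ and $z\in Z$ with $\phi^{q,U_n}(a,z,d|_{U_n^2})\in P\setminus F(q)$, then applying (TP2) to the subset $U_n$ of the space $V$ yields $\phi^{q,V}(a,z,d|_{V^2})=\phi^{q,U_n}(a,z,d|_{U_n^2})\in P\setminus F(q)$, so the same $a,z$ witness anti-$\mathfrak{G}$ for $V$. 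As every non-empty open $V$ contains some basic $U_n$, it suffices to check the members of the base.

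Next, for each $n$ and each $q\in Q$ I set
\[
G_{n,q}=\bigcup_{z\in Z}\ \bigcup_{a\in\seq(G(q),U_n)}\bigl\{\,d\in\met(X): \phi^{q,X}(a,z,d)\in P\setminus F(q)\,\bigr\}.
\]
By (TP2) (with $U_n\subseteq X$) a metric lies in $G_{n,q}$ exactly when $(U_n,d|_{U_n^2})$ admits a $q$-witness, so the local anti-$\mathfrak{G}$ set equals the countable intersection $\bigcap_n\bigcap_{q\in Q}G_{n,q}$. Each $G_{n,q}$ is open, since $F(q)$ is closed in $P$ and, for fixed $a,z$, the map $d\mapsto\phi^{q,X}(a,z,d)$ is continuous by (TP1), making every set in the union the preimage of an open set. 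For density of $G_{n,q}$ I repeat the density argument from the proof of Theorem \ref{thm:trans}, localized to $U_n$: given $d$ and $\epsilon>0$, singularity of $\mathfrak{G}$ furnishes $z\in Z$ and a finite metric space $(R,d_R)$ of diameter at most $\epsilon/2$, with $\card(R)\in G(q)$, realizing a value in $P\setminus F(q)$; since $U_n$ is non-discrete I choose a finite (hence closed in $X$) set $A\subseteq U_n$ of cardinality $\card(R)$ whose $d$-diameter is at most $\epsilon/2$, transport $d_R$ to a metric $e\in\met(A)$ along a bijection $R\to A$, and apply Theorem \ref{thm:interpolate} to the single closed set $A$ to obtain $m\in\met(X)$ with $m|_{A^2}=e$ and $\mathcal{D}_X(m,d)=\mathcal{D}_A(e,d|_{A^2})\le\epsilon/2<\epsilon$. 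Enumerating $A$ as a sequence $a\in\seq(G(q),U_n)$, (TP2) gives $\phi^{q,X}(a,z,m)=\phi^{q,A}(a,z,e)$, which, exactly as in the proof of Theorem \ref{thm:trans}, lands in $P\setminus F(q)$; hence $m\in G_{n,q}$.

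Since $\met(X)$ is a Baire space by Lemma \ref{lem:Baire}, the countable intersection $\bigcap_n\bigcap_{q\in Q}G_{n,q}$ of dense open sets is a dense $G_\delta$ subset, giving the conclusion. I expect the principal obstacle to be the base reduction itself: the local property quantifies over the uncountable family of all open subspaces, and it is the subspace-compatibility axiom (TP2) that collapses this to a countable condition, by propagating a witness from a small open set to every larger one. The accompanying technical point is that each basic open set must be non-discrete in order to house the small singular witnesses, which is precisely where local non-discreteness of $X$ enters.
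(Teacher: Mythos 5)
Your proof is correct, and its skeleton coincides with the paper's: reduce the quantifier over all non-empty open sets to a countable base using (TP2) (this is the paper's Lemma \ref{lem:here}/Corollary \ref{cor:anti}), write the local anti-$\mathfrak{G}$ set as a countable intersection of sets that are open by (TP1) (Proposition \ref{prop:open} and Corollary \ref{cor:open}), prove density, and finish with the Baire property of $\met(X)$ from Lemma \ref{lem:Baire}. The one genuine difference is the density step. The paper never shows that each individual $G_{n,q}$ is dense: for each basic open set $U_i$ it fixes an $(\omega_0+1)$-subspace $R_i\subset U_i$ and quotes Proposition \ref{prop:dense}, so that the single dense set $T(X,R_i,\mathfrak{G})$ sits inside the whole intersection $\bigcap_{q\in Q}G_{i,q}$ (in your notation); the witnesses for \emph{all} $q\in Q$ are packed simultaneously into one small $(\omega_0+1)$-space by Lemma \ref{lem:sing}, and Baire is invoked only for the intersection over the base index $i$. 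You instead establish density of each $G_{n,q}$ separately, using a finite singular witness for the single parameter $q$ transported onto a small finite subset of $U_n$ via Corollary \ref{cor:ext}, and you let the Baire property absorb both intersections (over $n$ and over $q$). This is legitimate here and slightly more elementary, since it bypasses Lemma \ref{lem:sing} and Proposition \ref{prop:dense} entirely; but note that this finer-grained strategy could not replace the paper's argument for the global Theorem \ref{thm:trans}, where $\met(X)$ is not known to be Baire and one genuinely needs a single dense set contained in the intersection over $q$ --- which is exactly what the $(\omega_0+1)$-construction supplies. One shared caveat: transferring the singular witness value from $(R,d_R)$ to its isometric copy $(A,e)$ uses isometry-invariance of $\phi$, which is not formally a consequence of (TP1)--(TP2); the paper's own proof of Proposition \ref{prop:dense} makes the identical implicit step (when it pulls back the metric of $L$ along the homeomorphism $\tau$), so this is not a gap of your argument relative to the paper's.
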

Recall 
 that 
all second-countable locally compact spaces are metrizable,  
which is a consequence of 
the Urysohn metrization theorem.

\subsubsection{Organization}
This paper is organized  as follows. 
In Section 
\ref{sec:pre}, 
we review 
the basic and  classical  theorems 
on topological spaces 
and metric spaces. 
Section \ref{sec:inter} is devoted to
the  proof of  Theorem 
\ref{thm:interpolate}. 
In Section 
\ref{sec:trans}, 
we prove Theorem 
\ref{thm:trans} 
and show that 
various properties in metric geometry are transmissible properties. 
 Section 
\ref{sec:loctrans}
contains the proof that 
$\met{X}$ is Baire when 
$X$ is second-countable and locally compact. 
We also show  Theorem 
\ref{thm:loctrans}.

\begin{ac}
The author would like to thank Professor Koichi Nagano for his advice and constant encouragement. 
This work was supported by JSPS KAKENHI Grant Number 18J21300, 
and partially supported by 
JSPS 
KAKENHI Grant Number 
JP24KJ0182. 
The author also would like to 
thank the referee
 for variable and helpful comments to 
improve the present paper. 
\end{ac}

\section{Preliminaries}\label{sec:pre}
In this paper, the symbol 
$\nn$ 
stands for the set of all positive integers.
 
Let 
$(X, d)$ 
be a metric space. 
Let 
$A$ 
be a subset of
 $X$.  
We denote by 
$\delta_{d}(A)$ 
the diameter of 
$A$. 
We denote by 
$B(x, r)$ 
(resp.~$U(x, r)$) 
the closed 
(resp.~open) ball centered at 
$x$ 
with radius 
$r$. 
\subsection{The Hausdorff extension theorem}
The following celebrated theorem was first proven by Hausdorff 
\cite{Ha1930}
(cf. \cite{Hausdorff1938}, 
\cite{MR0024609}, 
\cite{MR0230285}, 
and 
\cite{MR321026}). 
The latter part on complete
metrics 
was added by 
Bacon 
\cite{MR0049543}. 
\begin{thm}\label{thm:Hausdorff}
Let 
$X$ be a metrizable space,  
and let 
$A$ 
be a closed subset of 
$X$. 
Then for every 
$d\in \met{A}$ 
there exists 
$D\in \met{X}$ 
with 
$D|_{A^2}=d$. 
Moreover, 
if 
$X$ is completely metrizable,  
and if 
$d\in \met{A}$ 
is  a complete metric on 
$A$, 
then we can choose 
$D\in \met{X}$ 
as a complete metric on 
$X$. 
\end{thm}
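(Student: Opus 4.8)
The plan is to prove the Hausdorff extension theorem, Theorem \ref{thm:Hausdorff}, by transporting the extension problem from metrics to topological embeddings into a Banach space, which is precisely the correspondence the author advertises for the more general Theorem \ref{thm:interpolate}. Given $d\in\met(A)$, I would first apply the Kuratowski embedding (Theorem \ref{thm:kur}) to realize $(A,d)$ as an isometric, and in particular topological, embedding $f\colon A\to V$ into a Banach space $V$, so that $d(x,y)=\|f(x)-f(y)\|_V$ for all $x,y\in A$. The metric extension problem then becomes the problem of extending the embedding $f$ to a continuous map $F\colon X\to V'$, for a possibly enlarged Banach space $V'$, in such a way that $F$ is a topological embedding on all of $X$ while still agreeing with $f$ on $A$; the induced metric $D(x,y)=\|F(x)-F(y)\|_{V'}$ then lies in $\met(X)$ and restricts to $d$ on $A^2$.

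The main steps are as follows. First I would use the Tietze–Dugundji extension theorem to extend $f\colon A\to V$ to a continuous map $g\colon X\to V$; since $A$ is closed in the metrizable (hence normal, in fact) space $X$ and $V$ is a locally convex space, such an extension exists with values in the convex hull of $f(A)$, which guarantees in particular that $g|_A=f$. The map $g$ need not be injective away from $A$, so the second step repairs this by adjoining extra coordinates: following the Kuratowski–Arens device of "converting a continuous function into a topological embedding by extending the codomain," I would pick any $h\in\met(X)$ extending $d$ by the version of Hausdorff's theorem we are allowed to cite, or more self-containedly build an auxiliary embedding $k\colon X\to W$ into a second Banach space $W$ coming from a compatible metric on $X$, and then set $F=(g,k)\colon X\to V\oplus W$. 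Because $k$ is already a topological embedding of $X$, the product map $F$ is a topological embedding as well, while the first coordinate ensures $F|_A=f$. To force the restriction $D|_{A^2}=d$ exactly rather than merely topologically, I would take the norm on $V\oplus W$ so that the $W$-coordinate contributes nothing on $A$, for example by arranging $k$ to vanish on $A$ (replacing $k$ by $k-(\text{continuous extension of }k|_A)$ using Dugundji again), so that for $x,y\in A$ one has $F(x)-F(y)=(f(x)-f(y),0)$ and hence $D(x,y)=\|f(x)-f(y)\|_V=d(x,y)$.

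The hard part will be the completeness clause: when $X$ is completely metrizable and $d$ is a complete metric on $A$, I must produce a complete extension $D$. The obstacle is that the naive embedding $F$ above need not induce a complete metric, since completeness of $\|F(x)-F(y)\|$ is equivalent to $F(X)$ being closed in the Banach space, which the Dugundji extension does not control. My plan is to handle this by choosing the auxiliary coordinate more carefully: I would incorporate a coordinate that diverges as $x$ approaches any "missing" point of a completion, a standard trick realizing $X$ as a closed (equivalently $G_\delta$, since $X$ is completely metrizable) subset of a suitable space, for instance adding the term $1/d_X(x, X\setminus U)$-type functions along a countable system of open sets whose complements exhaust the boundary behaviour, and then verify that any $D$-Cauchy sequence, being Cauchy in the complete ambient structure, must converge to a point of $X$ rather than escape. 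I expect that reconciling the exact equality $D|_{A^2}=d$ with the completeness-forcing coordinates will require ensuring those extra coordinates vanish identically on a neighbourhood structure of $A$ or at least cancel on $A^2$, which is the delicate bookkeeping point of the argument; since Theorem \ref{thm:interpolate} is stated as a strict generalization, I anticipate the author deduces the completeness half either by a direct closedness argument or by citing the completeness refinement already present in the embedding machinery.
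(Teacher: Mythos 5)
There is a genuine gap, and before naming it, one contextual point: the paper does not prove Theorem \ref{thm:Hausdorff} at all --- it is quoted as a classical result of Hausdorff \cite{Haus1} (with alternative proofs in \cite{Bing}, \cite{Arens}, \cite{Bacon}, \cite{Tor}), and the paper's own machinery cannot be its source, since the proof of Theorem \ref{thm:interpolate} invokes Theorem \ref{thm:Hausdorff} as an ingredient (for the case $\eta=\infty$, to produce the metric $r\in\met(X)$ with $r|_{A_i^2}=e_i$ from which $l=\min\{r, \eta/2\}$ is built, and again for the completeness clause). Consequently your first variant --- ``pick any $h\in\met(X)$ extending $d$ by the version of Hausdorff's theorem we are allowed to cite'' --- is circular in this setting; only your ``self-contained'' variant is admissible, and that is exactly where the proof breaks.

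The gap is the auxiliary coordinate. If you keep $k\colon X\to W$ as the Kuratowski embedding of some $\rho\in\met(X)$, then $F=(g,k)$ is indeed a topological embedding, but for $x,y\in A$ the induced distance is $\max\{d(x,y),\rho(x,y)\}$ (or $d(x,y)+\rho(x,y)$ for the sum norm), and $\rho|_{A^2}$ is only topologically, not uniformly, comparable with $d$; so the exact equality $D|_{A^2}=d$ fails. If instead you replace $k$ by $k'=k-E(k|_A)$, where $E(k|_A)$ is a Dugundji extension, then $k'$ vanishes on $A$ but is no longer an embedding --- it is a difference of two continuous maps, and nothing prevents $k'(y)=0$ at points $y\notin A$; since $g$ carries no injectivity information off $A$ either (Dugundji only constrains its values to lie in the convex hull of $f(A)$), the combined map $F=(g,k')$ can fail to be injective, let alone a topological embedding, so the resulting $D$ need not be a metric in $\met(X)$. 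Escaping this dilemma requires an auxiliary coordinate that is simultaneously an embedding of $X$ and contributes at most $d$ on $A^2$, i.e.\ a metric $l\in\met(X)$ with $l|_{A^2}\le d$ --- which is precisely how the paper's proof of Theorem \ref{thm:interpolate} proceeds, but there such an $l$ is manufactured \emph{from} the Hausdorff theorem. Producing it from scratch is the real content of the theorem: naive shortcut formulas such as $l(x,y)=\min\{\rho(x,y),\inf_{a,b\in A}(\rho(x,a)+d(a,b)+\rho(b,y))\}$ violate the triangle inequality exactly because $d$ and $\rho|_{A^2}$ are incomparable, and none of the tools you invoke (Kuratowski embedding, Dugundji extension, product norms) supplies a substitute; this is what the arguments of Hausdorff, Bing, or Toru\'nczyk actually do. The completeness clause inherits the same problem, since your sketch for it sits on top of the broken main construction.
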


\subsection{The Michael continuous selection theorem}
Let 
$V$ 
be a Banach space. 
We denote by 
$\conv(V)$ 
the set of all non-empty closed convex subsets  of 
$V$. 
For a topological space 
$X$ 
we say that a map 
$\phi\colon  X\to \conv(V)$ 
is 
\emph{lower semi-continuous} 
if for every open subset 
$O$ 
of 
$V$ 
the set 
$\{\, x\in X\mid \phi(x)\cap O\neq \emptyset\, \}$ 
is open in $X$. 

The following  theorem is known as one of the Michael continuous selection theorems proven in 
\cite{MR77107}.

\begin{thm}\label{thm:Michael}
Let
 $X$ 
 be a paracompact space, 
and 
$A$ 
 a closed subsets of 
 $X$, 
$V$  a Banach space, 
and 
$\phi\colon X\to \conv(V)$ 
be a lower semi-continuous map. 
If  a continuous map 
$f\colon A\to B$ 
satisfies 
$f(x)\in \phi(x)$ 
for all 
$x\in A$, 
then there exists a continuous map 
$F\colon X\to V$ with 
$F|_A=f$ 
such that 
for every 
$x\in X$ 
we have 
$F(x)\in \phi(x)$. 
\end{thm}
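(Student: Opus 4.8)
The plan is to follow Michael's original argument \cite{Mi}, first producing a continuous selection with no constraint from $A$, and then forcing the boundary condition $F|_A = f$ by replacing $\phi$ with a suitable modified multifunction. (Here I read the hypothesis as $f \colon A \to V$, so that $f$ and the desired $F$ take values in the same Banach space.) For $v \in V$ and $S \subseteq V$ I abbreviate $\operatorname{dist}(v, S) = \inf_{w \in S} \|w - v\|_V$. The heart of the matter is the following \emph{approximate selection} principle: if $X$ is paracompact and $\psi$ is a lower semi-continuous multifunction on $X$ whose values $\psi(x)$ are nonempty convex (not necessarily closed) subsets of $V$, then for every $\epsilon \in (0, \infty)$ there is a continuous $g \colon X \to V$ with $\operatorname{dist}(g(x), \psi(x)) < \epsilon$ for all $x \in X$.

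Proving this principle is the step I expect to be the main obstacle, since it is the only place where paracompactness is used. For each $v \in V$ set $O_v = \{\, x \in X \mid \psi(x) \cap U(v, \epsilon) \neq \emptyset \,\}$; lower semi-continuity makes every $O_v$ open, and because each $\psi(x)$ is nonempty the family $\{O_v\}_{v \in V}$ covers $X$. By paracompactness I would choose a locally finite partition of unity $\{p_\lambda\}_{\lambda \in \Lambda}$ subordinate to this cover, pick for each $\lambda$ a point $v_\lambda$ with $\supp p_\lambda \subseteq O_{v_\lambda}$, and define $g(x) = \sum_{\lambda} p_\lambda(x)\, v_\lambda$, a locally finite and hence continuous sum. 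Fixing $x$, only finitely many $p_\lambda(x)$ are positive, and for each such $\lambda$ we have $\operatorname{dist}(v_\lambda, \psi(x)) < \epsilon$, so we may pick $w_\lambda \in \psi(x)$ with $\|v_\lambda - w_\lambda\|_V < \epsilon$; then $\sum_\lambda p_\lambda(x)\, w_\lambda$ lies in $\psi(x)$ by convexity, and $\|g(x) - \sum_\lambda p_\lambda(x) w_\lambda\|_V \le \sum_\lambda p_\lambda(x)\,\|v_\lambda - w_\lambda\|_V < \epsilon$, which gives $\operatorname{dist}(g(x), \psi(x)) < \epsilon$.

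With the principle in hand I would build an exact selection of $\phi$ by uniform approximation. Take $g_1$ with $\operatorname{dist}(g_1(x), \phi(x)) < 2^{-1}$; inductively, given a continuous $g_n$ with $\operatorname{dist}(g_n(x), \phi(x)) < 2^{-n}$, set $\psi_n(x) = \phi(x) \cap U(g_n(x), 2^{-n})$, which has nonempty convex values. A short argument shows $\psi_n$ is again lower semi-continuous: at a point $x_0$ with $\psi_n(x_0) \cap O \neq \emptyset$ one selects $w$ in this intersection and a ball $U(w, \rho) \subseteq O$ whose closure stays inside $U(g_n(x_0), 2^{-n})$; continuity of $g_n$ keeps $U(w, \rho)$ inside $U(g_n(x), 2^{-n})$ for $x$ near $x_0$, while lower semi-continuity of $\phi$ supplies points of $\phi(x)$ in $U(w, \rho)$ nearby, so $\psi_n(x) \cap O \neq \emptyset$ on a neighborhood of $x_0$. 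Applying the approximate selection principle to $\psi_n$ with $\epsilon = 2^{-(n+1)}$ yields a continuous $g_{n+1}$ with $\operatorname{dist}(g_{n+1}(x), \psi_n(x)) < 2^{-(n+1)}$; since $\psi_n(x) \subseteq \phi(x)$ this preserves $\operatorname{dist}(g_{n+1}(x), \phi(x)) < 2^{-(n+1)}$, and since $\psi_n(x) \subseteq U(g_n(x), 2^{-n})$ it forces $\|g_{n+1}(x) - g_n(x)\|_V < 2^{-n} + 2^{-(n+1)}$ uniformly in $x$. Thus $\{g_n\}$ is uniformly Cauchy; completeness of $V$ provides a continuous uniform limit $F$, and because each $\phi(x)$ is closed and $\operatorname{dist}(g_n(x), \phi(x)) \to 0$, we get $F(x) \in \phi(x)$ for all $x$. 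This settles the selection statement when $A = \emptyset$.

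Finally I would produce the prescribed values. Define $\Phi \colon X \to \conv(V)$ by $\Phi(x) = \{f(x)\}$ for $x \in A$ and $\Phi(x) = \phi(x)$ for $x \in X \setminus A$; since $f(x) \in \phi(x)$ on $A$, the values of $\Phi$ are nonempty closed convex subsets of $V$. The one thing to verify is that $\Phi$ is lower semi-continuous: on the open set $X \setminus A$ this is inherited from $\phi$, while at a point $x_0 \in A$ with $\Phi(x_0) \cap O \neq \emptyset$ (so $f(x_0) \in O$) I would intersect the open set $\{\, x \mid \phi(x) \cap O \neq \emptyset \,\}$, which contains $x_0$ because $f(x_0) \in \phi(x_0) \cap O$, with an open set $G$ satisfying $A \cap G = f^{-1}(O)$ near $x_0$; on this neighborhood every $y$ satisfies $\Phi(y) \cap O \neq \emptyset$, witnessed by $f(y)$ when $y \in A$ and by a point of $\phi(y)$ otherwise. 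Running the selection construction just established on $\Phi$ yields a continuous $F$ with $F(x) \in \Phi(x)$ for every $x$; this means $F(x) = f(x)$ on $A$, i.e.\ $F|_A = f$, and $F(x) \in \phi(x)$ everywhere, as required. The genuinely delicate points are the paracompactness-driven approximate selection and the lower semi-continuity of the truncated multifunctions $\psi_n$; the remaining estimates are routine bookkeeping with the series $\sum_n 2^{-n}$.
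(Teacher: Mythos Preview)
Your argument is a correct and complete rendition of Michael's original proof from \cite{Mi}: the approximate-selection step via a partition of unity, the iterative refinement through the truncated carriers $\psi_n$, and the final reduction to the unconstrained case by replacing $\phi$ with the singleton-valued $\Phi$ on $A$ are all handled properly. Note, however, that the paper does not supply its own proof of this theorem; it is quoted as a classical result and attributed to \cite{Mi}, so there is no in-paper argument to compare against.
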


By using  linear structure, 
we have the following:
\begin{prop}\label{prop:Hiso}
Let $V$ be a Banach space,  
and 
$x, y\in V$. 
Then for every 
$r\in (0, \infty)$ 
we have 
$\mathcal{H}(B(x, r), B(y, r))= \|x-y\|_V$, 
where 
$\mathcal{H}$ 
is the Hausdorff distance induced from the norm 
$\|\cdot\|_V$ 
of  
$V$. 
\end{prop}

\begin{cor}\label{cor:lsc}
Let 
$X$ 
be a topological space,  
$V$ 
be a Banach space, 
and let
$H\colon X\to V$ 
be a continuous map and 
$r\in (0, \infty)$. 
Then a map 
$\phi\colon X\to \conv(V)$ 
defined by 
$\phi(x)=B(H(x), r)$ 
is lower semi-continuous. 
\end{cor}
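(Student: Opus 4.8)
The plan is to verify the definition of lower semi-continuity directly, using Proposition~\ref{prop:Hiso} to control how the ball $B(H(x), r)$ moves as the base point $x$ varies. Fix an arbitrary open subset $O$ of $V$ and put $U=\{\,x\in X\mid \phi(x)\cap O\neq \emptyset\,\}$. To show $U$ is open it suffices to show that every point of $U$ is interior to $U$, so I fix $x_0\in U$ and look for a neighborhood of $x_0$ contained in $U$. Since $\phi(x_0)\cap O=B(H(x_0), r)\cap O\neq \emptyset$, I choose a point $p$ lying in this intersection; because $O$ is open, there is some $\eta\in (0, \infty)$ with $U(p, \eta)\subseteq O$.

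The central step is to transfer the openness of $O$ near $p$ into a constraint on $x$ by combining the continuity of $H$ with the Hausdorff-distance identity. By Proposition~\ref{prop:Hiso}, for every $x\in X$ we have $\mathcal{H}(B(H(x), r), B(H(x_0), r))=\|H(x)-H(x_0)\|_V$. By continuity of $H$, the set $W=\{\,x\in X\mid \|H(x)-H(x_0)\|_V<\eta\,\}$ is an open neighborhood of $x_0$. For each $x\in W$ the Hausdorff distance between $B(H(x), r)$ and $B(H(x_0), r)$ is then strictly less than $\eta$; since $p\in B(H(x_0), r)$, the definition of the Hausdorff distance (the directed distance from $B(H(x_0),r)$ to $B(H(x),r)$ is bounded by $\mathcal{H}$) gives a point $q\in B(H(x), r)$ with $\|p-q\|_V<\eta$. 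Hence $q\in U(p, \eta)\subseteq O$, so $q\in \phi(x)\cap O$ and therefore $x\in U$. This shows $W\subseteq U$, so $x_0$ is interior to $U$; as $x_0\in U$ was arbitrary, $U$ is open, and $\phi$ is lower semi-continuous.

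I do not anticipate any genuine obstacle here, since the essential geometric content has already been isolated in Proposition~\ref{prop:Hiso}: that result reduces the motion of the set-valued map $\phi$ to the motion of the center $H(x)$, after which lower semi-continuity follows purely formally from continuity of $H$. The only point requiring a word of care is the passage from $\mathcal{H}(A, B)<\eta$ together with $p\in B$ to the existence of $q\in A$ with $\|p-q\|_V<\eta$, which is immediate once one recalls that the Hausdorff distance dominates the distance from $p$ to $A$.
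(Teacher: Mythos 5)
Your proof is correct and takes essentially the same route as the paper: both arguments use Proposition~\ref{prop:Hiso} to equate the Hausdorff distance between the translated balls with $\|H(x)-H(x_0)\|_V$, then invoke continuity of $H$ and the definition of the Hausdorff distance to show every point of $\{\,x\in X\mid \phi(x)\cap O\neq\emptyset\,\}$ is interior. If anything, your formulation of the neighborhood as $W=\{\,x\in X\mid \|H(x)-H(x_0)\|_V<\eta\,\}=H^{-1}(U(H(x_0),\eta))$ is slightly cleaner than the paper's, which writes $U(a,\delta)$ even though $X$ is only assumed to be a topological space.
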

\begin{proof}
For every open subset 
$O$ 
of 
$V$,  
and for every point 
$a\in X$ 
with 
$\phi(a)\cap O\neq \emptyset$, 
choose  
$u\in \phi(a)\cap O$ 
and  
$l\in (0, \infty)$ 
with 
$U(u, l)\subset O$. 
By Proposition \ref{prop:Hiso}, 
we can take 
$\delta\in (0, \infty)$ 
such that for every 
 $x\in U(a, \delta)$ 
 we have 
 $\mathcal{H}(\phi(x), \phi(a))=\|H(x)-H(a)\|_V<l$. 
Then we have 
$\phi(x)\cap U(u, l)\neq \emptyset$, 
and hence  
$\phi(x)\cap O\neq \emptyset$. 
Therefore the set 
$\{\, x\in X\mid \phi(x)\cap O\neq \emptyset\, \}$ 
is open in $X$. 
\end{proof}

The following theorem is 
known as the Stone theorem proven in  
\cite{MR0026802}. 
\begin{thm}\label{thm:Stone}
All metrizable spaces are paracompact. 
\end{thm}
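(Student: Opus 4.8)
The plan is to prove A.\ H.\ Stone's theorem directly, following M.\ E.\ Rudin's short recursive construction rather than routing through full normality. Since paracompactness means that every open cover admits a locally finite open refinement, I would fix a metric $d$ compatible with the topology, take an arbitrary open cover $\{C_\alpha\}_{\alpha\in A}$ of $X$, and well-order the index set $A$. The goal is then to manufacture, by a single recursion over the dyadic scales $2^{-n}$ ($n\in\nn$), an open family indexed by pairs $(\alpha,n)$ that simultaneously refines the cover, covers $X$, and is locally finite.

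For the construction I would define sets $E_{\alpha,n}$ by recursion on $n$, letting $E_{\alpha,n}$ be the union of all open balls $U(x,2^{-n})$ whose center $x$ satisfies: (i) $\alpha$ is the least index with $x\in C_\alpha$; (ii) $x\notin E_{\beta,m}$ for every $m<n$ and every $\beta\in A$; and (iii) $B(x,3\cdot2^{-n})\subseteq C_\alpha$. Condition (iii) forces each $E_{\alpha,n}\subseteq C_\alpha$, so $\{E_{\alpha,n}\}$ refines $\{C_\alpha\}$, and each $E_{\alpha,n}$ is visibly open. Checking the covering is routine: given $p\in X$, let $\alpha$ be least with $p\in C_\alpha$ and pick $n$ with $B(p,3\cdot2^{-n})\subseteq C_\alpha$; then either $p$ already lies in some $E_{\beta,m}$ with $m<n$, or conditions (i)--(iii) hold for $p$ itself at stage $n$, so $p\in U(p,2^{-n})\subseteq E_{\alpha,n}$.

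The heart of the theorem, and the main obstacle, is local finiteness, for which I would extract two metric separation estimates from the defining conditions. First, if $x$ and $y$ are admissible centers for the same scale $n$ but distinct indices $\alpha<\beta$, then the minimality in (i) gives $y\notin C_\alpha$ while (iii) gives $B(x,3\cdot2^{-n})\subseteq C_\alpha$, whence $d(x,y)\ge3\cdot2^{-n}$; this spreads out the centers at each fixed scale. Second, if $x$ is admissible at scale $n$ and $y$ at a strictly larger scale $i>n$, then condition (ii) for $y$ gives $y\notin E_{\alpha,n}\supseteq U(x,2^{-n})$, so $d(x,y)\ge2^{-n}$; this separates centers across scales.

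To finish, given $p\in X$ I would choose $(\alpha,n)$ with $p\in E_{\alpha,n}$, then $j\in\nn$ with $U(p,2^{-j})\subseteq E_{\alpha,n}$, and take the neighborhood $V=U(p,2^{-(n+j)})$. The cross-scale estimate shows $V$ can meet $E_{\beta,i}$ only for the finitely many scales $i\le\max(n,j)$: for larger $i$, a point of $V\cap E_{\beta,i}$ would place its witnessing center within $U(p,2^{-j})\subseteq E_{\alpha,n}$, contradicting (ii) since $i>n$. The same-scale estimate, combined with the triangle inequality, shows that for each such scale $V$ meets $E_{\beta,i}$ for at most one $\beta$, the radius $2^{-(n+j)}$ having been calibrated so that two centers reachable from $V$ at a common scale $i\le\max(n,j)$ would lie strictly closer than $3\cdot2^{-i}$. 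Combining the two bounds, $V$ meets only finitely many $E_{\beta,i}$, which is exactly local finiteness. I expect the only delicate point to be this calibration of the radius of $V$, so that the same-scale estimate yields a strict contradiction with $d(x,y)\ge3\cdot2^{-i}$.
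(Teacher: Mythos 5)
Your proposal is correct, but note that the paper does not prove this statement at all: it quotes it as the Stone theorem with a citation to Stone's 1948 paper, where paracompactness of metric spaces is obtained by way of full normality. What you supply instead is M.~E.~Rudin's short direct construction, and your sketch does assemble into a complete proof. The two separation estimates are right: for same-scale centers with $\alpha<\beta$, condition (iii) for $x$ gives $B(x,3\cdot 2^{-n})\subseteq C_\alpha$ while minimality in (i) for $y$ gives $y\notin C_\alpha$, so $d(x,y)\ge 3\cdot 2^{-n}$; and for $i>n$, condition (ii) gives $y\notin E_{\alpha,n}\supseteq U(x,2^{-n})$, so $d(x,y)\ge 2^{-n}$. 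The calibration you flag as the delicate point does work out: with $V=U(p,2^{-(n+j)})$, if $V$ meets $E_{\beta,i}$ with $i>\max(n,j)$, the witnessing center $y$ satisfies $d(p,y)<2^{-(n+j)}+2^{-i}\le 2^{-(j+1)}+2^{-(j+1)}=2^{-j}$, so $y\in U(p,2^{-j})\subseteq E_{\alpha,n}$, contradicting (ii); and for a scale $i\le\max(n,j)$, since $n,j\ge 1$ one has $i\le n+j-1$, hence $2\cdot 2^{-(n+j)}\le 2^{-i}$, so two centers at scale $i$ reachable from $V$ lie within $2\cdot 2^{-i}+2\cdot 2^{-(n+j)}<3\cdot 2^{-i}$ of each other, contradicting the same-scale estimate; thus $V$ meets at most $\max(n,j)$ members of the refinement. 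As for what each route buys: the paper's citation keeps it short, since Stone's theorem is used only as a stepping stone to apply the Michael selection theorem to metrizable spaces, whereas your argument makes that step self-contained at the cost of a page and isolates the only set-theoretic input, namely a well-ordering of the index set.
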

By this theorem, 
we can apply Theorem 
\ref{thm:Michael} 
to all metrizable space.

\subsection{The Kuratowski embedding theorem}
For a metric space 
$(X, d)$, 
we denote by 
$C_b(X)$ 
the Banach space of all bounded continuous functions on
 $X$ 
equipped with the supremum norm. 
For 
$x\in X$, 
we denote by 
$d_{x}$ 
the function from 
$X$ 
to 
$\rr$ 
defined by 
$d_x(p)=d(x, p)$. 
The following theorem, 
which states that every metric space
is isometrically embeddable into Banach spaces,  
is known as the Kuratowski embedding theorem. 
The proof can be seen in, for example,  
\cite[Theorem 4.3.14]{MR1039321}. 

\begin{thm}\label{thm:kur}
Let 
$(X, d)$ 
be a metric space. 
Take 
$o\in X$. 
Then 
the  map 
$K: X\to C_{b}(X)$ 
defined  by 
$K(x)=d_{x}-d_{o}$ 
is an isometric embedding. 
Moreover, 
if 
$(X, d)$ 
is bounded, 
the map 
$L\colon X\to C_{b}(X)$ 
defined by 
$L(x)=d_{x}$ 
is an isometric embedding. 
\end{thm}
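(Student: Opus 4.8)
The plan is to verify the two required facts for the map $K$—that it takes values in $C_b(X)$, and that it preserves distances—and then to deduce the bounded case for $L$ from the very same computation. Since an isometry onto its image is automatically injective and a homeomorphism onto that image, establishing distance preservation is all that the phrase ``isometric embedding'' requires.

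First I would check that $K(x)=d_x-d_o$ genuinely belongs to $C_b(X)$ for each $x\in X$. Continuity is immediate, since each $d_y$ is $1$-Lipschitz by the triangle inequality and hence so is the difference $d_x-d_o$. For boundedness the key observation is that, although $d_x$ alone need not be bounded, the triangle inequality gives
\[
|(d_x-d_o)(p)|=|d(x,p)-d(o,p)|\le d(x,o)
\]
for every $p\in X$, so $d_x-d_o$ is bounded by $d(x,o)$. This cancellation is the one genuinely essential point of the argument: subtracting the base-point function $d_o$ is precisely what forces the image into $C_b(X)$ when $(X,d)$ is unbounded.

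Next I would compute the distance between images. Since $K(x)-K(y)=(d_x-d_o)-(d_y-d_o)=d_x-d_y$, the supremum norm is
\[
\|K(x)-K(y)\|_{C_b(X)}=\sup_{p\in X}|d(x,p)-d(y,p)|.
\]
The triangle inequality yields $|d(x,p)-d(y,p)|\le d(x,y)$ for every $p$, giving the upper bound $\le d(x,y)$, while evaluating the supremand at $p=y$ gives $|d(x,y)-0|=d(x,y)$, giving the matching lower bound. Hence $\|K(x)-K(y)\|_{C_b(X)}=d(x,y)$, so $K$ preserves distances, which completes the first assertion.

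Finally, for the bounded case, if $(X,d)$ has finite diameter then $d_x(p)=d(x,p)\le \delta_d(X)$ for all $p$, so $d_x$ is itself bounded and $L(x)=d_x\in C_b(X)$ is well-defined without any subtraction. The distance computation is identical: $L(x)-L(y)=d_x-d_y$, and the same two inequalities give $\|L(x)-L(y)\|_{C_b(X)}=d(x,y)$. The main obstacle is really only the boundedness check for $K$; once the cancellation $d_x-d_o$ is noticed, the rest is a direct application of the triangle inequality in both directions.
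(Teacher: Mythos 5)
Your proof is correct and is exactly the standard argument for the Kuratowski embedding; the paper itself gives no proof of this theorem, deferring to Engelking, and your computation (the reverse triangle inequality $|d(x,p)-d(o,p)|\le d(x,o)$ for boundedness, and the two-sided estimate with the supremum attained at $p=y$ for distance preservation) is precisely that standard one. One cosmetic slip: the difference $d_x-d_o$ of two $1$-Lipschitz functions is in general only $2$-Lipschitz, not $1$-Lipschitz, but continuity --- which is all you need there --- still follows.
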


\subsection{Baire spaces}
A topological space
 $X$ is said to be  
 \emph{Baire} if the intersections of 
countable dense open subsets of 
$X$ 
are dense in 
$X$.

The following  is 
 known as the Baire category theorem. 
\begin{thm}
Every completely metrizable space is a Baire space. 
\end{thm}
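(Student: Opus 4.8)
The plan is to unwind the definition of completely metrizable—fix a complete metric $d$ on $X$ inducing its topology—and then verify the Baire condition directly. So let $\{U_n\}_{n\in\nn}$ be a countable family of dense open subsets of $X$; I must show that $\bigcap_{n\in\nn} U_n$ meets every nonempty open set $W$, which is exactly density of the intersection. The engine of the argument is a nested-ball construction that converts density and openness into a decreasing sequence of closed balls, and then uses completeness to extract a point lying in all of them.

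First I would build the balls inductively. Since $U_1$ is dense and open, $W\cap U_1$ is nonempty and open, so I can choose $x_1$ and $r_1\in(0,1)$ with $B(x_1,r_1)\subset W\cap U_1$. Assuming $B(x_n,r_n)$ has been chosen, density and openness of $U_{n+1}$ make $U(x_n,r_n)\cap U_{n+1}$ nonempty and open, so I pick $x_{n+1}$ and $r_{n+1}\in(0,2^{-n})$ with $B(x_{n+1},r_{n+1})\subset U(x_n,r_n)\cap U_{n+1}$. This produces nested closed balls with radii tending to $0$ and with $B(x_{n+1},r_{n+1})\subset U(x_n,r_n)\subset B(x_n,r_n)$ for each $n$, together with the inclusions $B(x_n,r_n)\subset U_n$ and $B(x_1,r_1)\subset W$.

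Next I would check Cauchyness of the centers. For $m\ge n$ the chain of inclusions forces $x_m\in B(x_n,r_n)$, so $d(x_m,x_n)\le r_n\to 0$, and hence $\{x_n\}$ is Cauchy. By completeness of $d$ it converges to some $x\in X$. Because each $B(x_n,r_n)$ is closed and contains the tail $\{x_m : m\ge n\}$, the limit satisfies $x\in B(x_n,r_n)$ for every $n$. Therefore $x\in U_n$ for all $n$ and $x\in B(x_1,r_1)\subset W$, giving $x\in W\cap\bigcap_{n\in\nn}U_n$ and proving the intersection is dense.

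The argument is essentially classical, so I do not expect a genuine obstacle; the only delicate point is the bookkeeping in the last step, where one must have arranged the balls to be \emph{closed} and the radii to shrink to $0$, so that the completeness-produced limit is trapped inside every ball of the nested sequence. Getting these two features baked into the inductive construction is the one place to be careful.
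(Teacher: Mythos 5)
Your proof is correct: it is the classical nested-closed-ball argument, and every step checks out — the inductive choice of closed balls $B(x_{n+1},r_{n+1})\subset U(x_n,r_n)\cap U_{n+1}$ with radii $r_{n+1}<2^{-n}$, the Cauchy estimate $d(x_m,x_n)\le r_n$ for $m\ge n$, and the trapping of the limit point in every closed ball, hence in $W\cap\bigcap_{n\in\nn}U_n$. Note that the paper itself offers no proof of this statement: it records it as the classical Baire category theorem and immediately moves on to the consequence it actually needs (Lemma \ref{lem:gdelta}, that $G_\delta$ subsets of completely metrizable spaces are Baire), so your write-up simply supplies the standard argument the paper delegates to the literature.
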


Since 
$G_{\delta}$ 
subset of completely metrizable space is 
completely metrizable 
(see, e.g.  \cite[Theorem 24.12]{MR2048350}), 
we obtain the following:
\begin{lem}\label{lem:gdelta}
Every 
$G_{\delta}$ 
subset of a completely metrizable space is a Baire space. 
\end{lem}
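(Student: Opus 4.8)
The plan is to prove Lemma \ref{lem:gdelta} by combining the two results immediately preceding it in the text, namely the Baire category theorem (every completely metrizable space is Baire) and the stated fact that a $G_\delta$ subset of a completely metrizable space is itself completely metrizable. The logical chain is short: given a $G_\delta$ subset $Y$ of a completely metrizable space $X$, I would first invoke the cited result \cite[Theorem 24.12]{W} to conclude that $Y$, with its subspace topology, admits a complete metric inducing that topology. Having established that $Y$ is completely metrizable, I would then apply the Baire category theorem directly to $Y$ to conclude that $Y$ is a Baire space.

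More concretely, the first step is to let $Y=\bigcap_{n\in\nn}O_n$ where each $O_n$ is open in $X$, and record that $X$ is completely metrizable by hypothesis. The second step is to apply the quoted theorem on $G_\delta$ sets to produce a complete metric $\rho$ on $Y$ generating the subspace topology; this is where all the genuine content sits, but since it is an assumable external citation I would simply invoke it rather than reprove the classical Lavrentiev-type argument (intersecting $Y$ with the domain of convergence of the distance-to-complement functions $x\mapsto 1/\operatorname{dist}(x,X\setminus O_n)$). The third step is purely formal: a completely metrizable space is a Baire space, so $(Y,\rho)$ is Baire, and being Baire is a topological property, hence $Y$ with its original subspace topology is Baire.

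There is essentially no obstacle here, which is expected since this is a bookkeeping lemma assembling two cited facts into the exact form needed later (it will be used to transfer the $G_\delta$ density conclusions of Theorems \ref{thm:trans} and \ref{thm:loctrans} into genuine Baire-category statements on the spaces $\met(X)$ and $\cm(X)$). The only point requiring the slightest care is making sure the subspace topology on $Y$ coincides with the topology induced by the complete metric supplied by \cite{W} — but that coincidence is precisely the content of the quoted theorem, so no additional verification is needed. I would therefore keep the proof to two or three sentences and defer entirely to the two preceding results.
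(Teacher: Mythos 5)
Your proposal is correct and follows exactly the route the paper itself takes: the lemma is stated as an immediate consequence of the cited fact that a $G_{\delta}$ subset of a completely metrizable space is completely metrizable (\cite[Theorem 24.12]{W}) combined with the Baire category theorem stated just before it. Your additional remark that Baireness is a topological property, so the conclusion holds for the subspace topology, is a sensible (if routine) point of care that the paper leaves implicit.
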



\section{An interpolation Theorem of metrics}\label{sec:inter}

In this section, we prove Theorem 
\ref{thm:interpolate}.
\subsection{Amalgamation lemmas}
Let 
$X$ 
and 
$Y$ 
be sets,  
and let
 $\tau\colon  X\to Y$ 
 be a bijection. 
For a metric 
$d$ 
 on 
 $Y$,  
we denote by 
$\tau^{*}d$ 
the metric on 
$X$ 
defined by 
$(\tau^{*}d)(x, y)=d(\tau(x), \tau(y))$. 
In this case, 
 the map 
$\tau$ 
is an isometry between 
$(X, \tau^{*}d)$ 
and 
$(Y, d)$.

The following proposition can be considered as a specific case 
of the realization of  
the  Gromov--Hausdorff distance of two metric spaces. 
\begin{prop}\label{prop:GH}
Let 
$X$ 
be a metrizable space. 
For  
$r\in (0, \infty)$, 
let 
$d, e\in \met{X}$ 
with 
$\metdis_{X}(d, e)\le r$. 
Put 
$X_{0}=X$, 
and 
let 
$X_{1}$ 
be a set with 
$\card(X_1)=\card(X_0)$ 
and 
$X_{0}\cap X_{1}=\emptyset$. 
Take 
a bijection 
$\tau :X_{0}\to X_{1}$. 
Then there exists a metric 
$h\in \met{X_{0}\sqcup X_{1}}$ 
such that 
\begin{enumerate}
	\item $h|_{(X_{0})^2}=d$;
	\item $h|_{(X_{1})^2}=(\tau^{-1})^{*}e$; 
	\item for every 
	$x\in X_{0}$ 
	we have 
	$h(x, \tau(x))= r/2$. 
\end{enumerate}
\end{prop}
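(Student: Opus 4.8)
The plan is to realize $h$ by gluing the two copies along a ``bridge'' of width $r/2$. On each copy I keep the prescribed metric, setting $h|_{X_0^2}=d$ and $h|_{X_1^2}=(\tau^{-1})^*e$, so that conditions (1) and (2) hold by construction; for the cross distances I use the shortest-path formula
\[
h(x,\tau(w))=\inf_{z\in X_0}\left(d(x,z)+\frac{r}{2}+e(z,w)\right)\qquad(x,w\in X_0),
\]
together with $h(\tau(w),x)=h(x,\tau(w))$. Intuitively, a path from $x\in X_0$ to $\tau(w)\in X_1$ travels inside $X_0$ to some $z$, crosses the bridge from $z$ to $\tau(z)$ at cost $r/2$, and then travels inside $X_1$ to $\tau(w)$. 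Taking $z=x$ gives $h(x,\tau(x))\le r/2$, while every summand is at least $r/2$, so $h(x,\tau(x))=r/2$, which is condition (3). The infimum is finite (take $z=x$) and bounded below by $r/2>0$; hence $h$ is a well-defined, symmetric, nonnegative function vanishing exactly on the diagonal.

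The substantive step, and the main obstacle, is the triangle inequality, which I would prove by cases according to how the three points split between $X_0$ and $X_1$. When all three lie in one copy the inequality is simply that of $d$ or of $e$. The inequalities of ``mixed'' type in which a cross distance is bounded by a same-side distance plus a cross distance, such as $h(x,\tau(w))\le h(x,y)+h(y,\tau(w))$, reduce at once to the triangle inequality for $d$ (respectively $e$) inside the defining infimum. The delicate case is bounding a same-side distance by two cross distances, namely
\[
h(x,y)=d(x,y)\le h(x,\tau(w))+h(\tau(w),y)\qquad(x,y\in X_0).
\]
Expanding both infima and using $\inf_z f(z)+\inf_{z'}g(z')=\inf_{z,z'}\bigl(f(z)+g(z')\bigr)$, this amounts to $d(x,z)+e(z,w)+e(z',w)+d(z',y)+r\ge d(x,y)$ for all $z,z'\in X_0$. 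Applying $e(z,w)+e(z',w)\ge e(z,z')$ and then the compatibility hypothesis in the form $e(z,z')\ge d(z,z')-r$, the $+r$ contributed by the two bridge crossings of width $r/2$ exactly cancels the $-r$, leaving $d(x,z)+d(z,z')+d(z',y)\ge d(x,y)$. The symmetric configuration, with two points in $X_1$ and one in $X_0$, is handled identically using $d(z,z')\ge e(z,z')-r$. This is precisely where the assumption $\mathcal{D}_X(d,e)\le r$ is indispensable.

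Finally I would verify that $h$ induces the correct topology, i.e.\ that $h\in\met(X_0\sqcup X_1)$. Since every cross distance is at least $r/2$, for any point the open $h$-ball of radius $r/2$ lies entirely inside the copy containing that point; hence $X_0$ and $X_1$ are both $h$-open, the subspace $h$-topology on $X_0$ coincides with the $d$-topology, and that on $X_1$ with the $(\tau^{-1})^*e$-topology. As $d,e\in\met(X)$ both generate the topology of $X$ and $\tau$ transports it to $X_1$, the metric $h$ generates precisely the coproduct topology of $X_0\sqcup X_1$. Thus $h\in\met(X_0\sqcup X_1)$ satisfies (1)--(3), completing the proof.
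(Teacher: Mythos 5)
Your proof is correct and takes essentially the same route as the paper's: the identical bridge formula $\inf_{z\in X_0}(d(x,z)+r/2+e(z,w))$ for cross distances, the same case analysis for the triangle inequality, and the same use of $\mathcal{D}_X(d,e)\le r$ to cancel the two $r/2$ crossings in the delicate two-cross-distances case. Your closing paragraph checking that $h$ generates the coproduct topology (via cross distances being bounded below by $r/2$) is a welcome addition, since the paper's own proof stops at the triangle inequality and leaves that point implicit.
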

\begin{proof}
We define a symmetric function 
$h: (X_{0}\sqcup X_{1})^2\to [0, \infty)$ 
by
\begin{align*}
	h(x, y)=
		\begin{cases}
		d(x, y) & \text{if $x, y\in X_0$;}\\
		e(x, y) & \text{if $x, y\in X_1$;}\\
		 \inf_{a\in X_0}(d(x, a)+r/2+e(\tau(a), y)) & \text{if $(x, y)\in X_0\times X_1$. }
		\end{cases}
\end{align*}
Then $h$ is a metric as desired. 
For more details, we refer the readers to  \cite[Theorem 7.3.25]{MR1835418}. 
\end{proof}

The following   proposition
is  known as  an amalgamation of metrics.  
\begin{prop}\label{prop:amal1}
Let 
$(X, d_{X})$ 
and 
$(Y, d_{Y})$ 
be metric spaces, 
and let 
$Z=X\cap Y$. 
If 
$Z\neq \emptyset$ 
and 
$d_{X}|_{Z^2}=d_{Y}|_{Z^2}$, 
then there exists a metric
 $h$ on 
$X\cup Y$ 
such that 
\begin{enumerate}
	\item $h|_{X^2}=d_{X}$; 
	\item $h|_{Y^2}=d_{Y}$. 
\end{enumerate}
\end{prop}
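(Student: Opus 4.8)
The plan is to realize $h$ by interpolating distances across the shared part $Z$. First I would define $h$ on $(X\cup Y)^2$ by $h(p,q)=d_X(p,q)$ for $p,q\in X$, by $h(p,q)=d_Y(p,q)$ for $p,q\in Y$, and for a \emph{cross pair} $p\in X\setminus Y$, $q\in Y\setminus X$ (and symmetrically) by
\[
h(p,q)=\inf_{z\in Z}\bigl(d_X(p,z)+d_Y(z,q)\bigr),
\]
which makes sense because $Z\neq\emptyset$. The first point to settle is that the two diagonal clauses are consistent on $Z^2$, and this is exactly the hypothesis $d_X|_{Z^2}=d_Y|_{Z^2}$; conclusions (1) and (2) are then built into the definition. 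I would also record the compatibility remark that for $p\in X$ and $q\in Z$ the infimum formula already returns $d_X(p,q)$ (take $z=q$ for the upper bound, and use the $d_X$-triangle inequality together with $d_X|_{Z^2}=d_Y|_{Z^2}$ for the lower bound), so the formula agrees with the diagonal clauses wherever both could apply; this keeps the later estimates uniform.

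Symmetry and non-negativity are immediate, so the bulk of the routine work is the triangle inequality $h(a,b)\le h(a,c)+h(c,b)$, which I would dispatch by a case analysis on the locations of $a,b,c$ among $X\setminus Y$, $Y\setminus X$, and $Z$, exactly in the style of Proposition \ref{prop:GH}. If all three points lie in $X$, or all in $Y$, the inequality is inherited from $d_X$ or $d_Y$. If $a,b$ lie on opposite sides and $c\in Z$, one uses $c$ itself as a competitor in the infimum defining $h(a,b)$. If $c$ lies strictly on one side, one absorbs $d_X(c,z)$ (or $d_Y(c,z)$) into the infimum by the triangle inequality on that side and then passes to the infimum. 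The only genuinely two-parameter case is $a,b$ on the same side, say $a,b\in X\setminus Y$, with $c$ opposite; there I would estimate $d_X(a,b)\le d_X(a,z)+d_Y(z,z')+d_X(z',b)$ for $z,z'\in Z$ (rewriting the middle term via $d_X|_{Z^2}=d_Y|_{Z^2}$), route $d_Y(z,z')$ through $c$ by the $d_Y$-triangle inequality, and take the infima over $z$ and $z'$ separately — the same four-line estimate already carried out in Proposition \ref{prop:GH}.

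The hard part will be positive-definiteness: $h(p,q)>0$ for $p\neq q$. This is clear within $X$ and within $Y$, so it reduces to the cross pairs, where the infimum can a priori collapse to $0$: if $p\in X\setminus Y$ were a $d_X$-limit of points of $Z$ whose $d_Y$-images converged to $q\in Y\setminus X$, then $h(p,q)$ would vanish although $p\neq q$. The way I would prevent this — adding it as an explicit hypothesis if it is not already implicit — is to use that $Z=X\cap Y$ is closed in $X$, so that $d_X(p,Z)>0$ for each $p\in X\setminus Y=X\setminus Z$; then
\[
h(p,q)=\inf_{z\in Z}\bigl(d_X(p,z)+d_Y(z,q)\bigr)\ge \inf_{z\in Z}d_X(p,z)=d_X(p,Z)>0.
\]
This closedness is not a cosmetic technicality but a genuine necessity: taking $X=\{0\}\cup\{1/n:n\in\nn\}$ and $Y=\{\ast\}\cup\{1/n:n\in\nn\}$ with the shared set $Z=\{1/n:n\in\nn\}$, no amalgamated metric can exist, since the triangle inequality would force the distance between the distinct limit points $0$ and $\ast$ to be $0$. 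Under the closedness hypothesis, however, assembling the three ingredients above shows that $h$ is a metric on $X\cup Y$ restricting to $d_X$ on $X$ and to $d_Y$ on $Y$, which is the claim.
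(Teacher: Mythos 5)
Your construction is the same as the paper's: the identical three-clause definition of $h$ with cross distances $\inf_{z\in Z}\bigl(d_X(x,z)+d_Y(z,y)\bigr)$, the same use of $d_X|_{Z^2}=d_Y|_{Z^2}$ for consistency (your compatibility remark for pairs in $X\times Z$ is in fact the full justification of what the paper dismisses with one terse sentence), and the same case analysis for the triangle inequality --- your two-parameter case is literally the paper's first case. The genuinely new content is your discussion of positive-definiteness, and there you have caught a real defect: the paper's proof verifies only the triangle inequality and never checks that $h(x,y)>0$ for distinct cross pairs, and indeed this cannot be checked, because the proposition as printed is false. Your counterexample is valid: with $X=\{0\}\cup\{1/n : n\in\nn\}$ Euclidean and $Y=\{\ast\}\cup\{1/n : n\in\nn\}$ carrying the metric transported from $X$ by $0\mapsto\ast$, the hypotheses hold with $Z=\{1/n : n\in\nn\}$, yet any common extension $h$ would satisfy $h(0,\ast)\le h(0,1/n)+h(1/n,\ast)=2/n$ for every $n$, forcing $h(0,\ast)=0$ although $0\neq\ast$. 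So what the paper actually constructs is in general only a pseudo-metric.

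Your repair is also the right one. Closedness of $Z$ in $(X,d_X)$ is not automatic --- your counterexample is precisely a failure of it --- but it must be added as a hypothesis (closedness in just one of the two spaces suffices, by the symmetry of the formula), and it settles the only case in doubt: for $p\in X\setminus Z$ and $q\in Y\setminus Z$,
\[
h(p,q)\ge \inf_{z\in Z} d_X(p,z)=d_X(p,Z)>0.
\]
Moreover, the extra hypothesis costs the paper nothing where Proposition \ref{prop:amal1} is invoked: in Lemma \ref{lem:key} the intersection is $\coprod_{i\in I}A_i$, which is closed in $X$ because a discrete family of closed sets has closed union (Proposition \ref{prop:disclosed}). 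So your version of the statement, with closedness made explicit, is the one that should stand in place of Proposition \ref{prop:amal1}.
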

\begin{proof}
We define a symmetric function 
$h: (X\cup Y)^2\to [0, \infty)$ 
by
\begin{align*}
	h(x, y)=
		\begin{cases}
		d_X(x, y) & \text{if $x, y\in X$;}\\
		d_Y(x, y) & \text{if $x, y\in Y$;}\\
		\inf_{z\in Z}(d_X(x, z)+d_Y(z, y)) & \text{if $(x, y)\in X\times Y$. }
		\end{cases}
\end{align*}
Since 
$d_{X}|_{Z^2}=d_{Y}|_{Z^2}$, 
the function 
$h$
 is well-defined, and it becomes a desired metric. 
 The detailed proof can be seen in, for example, 
\cite{MR1918193}
 (cf. \cite{Ury1927} and \cite{MR105082}). 
\end{proof}

 For a mutually disjoint family 
$\{T_{i}\}_{i\in I}$ 
of  topological spaces, 
we consider that  the space 
$\coprod_{i\in I}T_{i}$ 
is always equipped
 with the direct sum topology.

\begin{prop}\label{prop:key}
Let 
$X$ 
be a metrizable space, 
and 
$\{A_{i}\}_{i\in I}$ 
be a discrete family of closed subsets of 
$X$. 
Take 
$d\in \met{X}$,  
and let 
$\{e_{i}\}_{i\in I}$ 
be a family of metrics with 
$e_{i} \in \met{A_{i}}$. 
Put  
$\eta=\sup_{i\in I}\metdis_{A_{i}}(e_{A_{i}}, d|_{(A_{i})^2})$
and assume that 
$\eta<\infty$. 
Let 
$\{B_{i}\}_{i\in I}$ 
be a mutually disjoint family of sets 
such that for all 
$i\in I$ 
we have 
$X\cap B_{i}=\emptyset$. 
Take 
$\tau\colon  \coprod_{i\in I}A_{i}\to \coprod_{i\in I}B_{i}$ 
be a bijection such that 
for each 
$i\in I$ 
the map 
$\tau_{i}=\tau|_{A_{i}}$ 
is a bijection between 
$A_{i}$ 
and 
$B_{i}$. 
Then 
there exists a metric 
$h$ 
on 
$X\sqcup\coprod_{i\in I}B_{i}$ 
such that
\begin{enumerate}
	\item for every 
	$i\in I$ 
	we have 
	$h|_{(B_{i})^2}=(\tau_i^{-1})^{*}e_{i}$; 
	\item $h|_{X^2}=d$; 
	\item for every 
	$x\in \coprod_{i\in I}A_{i}$ 
	we have $h(x, \tau(x))= \eta/2$. 
\end{enumerate}
\end{prop}
\begin{proof}
for every  
$i\in I$, 
Proposition \ref{prop:GH}
enables us to 
take   a metric 
$l_{i}\in \met{A_{i}\sqcup B_{i}}$ 
such that 
\begin{enumerate}[label=\textup{(a\arabic*)}]
	\item $l_{i}|_{A_{i}^2}=d|_{A_{i}^2}$;
	\item $l_{i}|_{B_{i}^2}=(\tau_i^{-1})^{*}e_{i}$; 
	\item for all $x\in A_i$ 
	we have $l_{i}(x, \tau(x))= \eta/2$. 
\end{enumerate}
From now on, 
 we construct a metric mentioned in the proposition 
by transfinite recursion.
We may assume that 
the index set $I$ is a cardinal 
$\kappa$. 
For each 
$\gamma<\theta$, 
we put 
$Z_{\gamma}=X\sqcup\coprod_{\alpha<\gamma}B_{\alpha}$. 

Fix 
$\theta\le \kappa$, 
and 
assume that 
we already constructed a metric 
$\{h_{\alpha}\}_{\alpha<\theta}$
 such that 
\begin{enumerate}[label=\textup{(b\arabic*)}]

\item\label{item:amal241011}
If $\alpha<\beta<\theta$, 
then we have 
$h_{\beta}|Z_{\alpha}=h_{\alpha}$. 

\item\label{item:restrict241011}
If 
$\alpha<\beta<\theta$, 
then 
we have 
$h_{\beta}|_{(A_{\alpha}\cup B_{\alpha})^{2}}=l_{\alpha}$. 
\end{enumerate}

Under this assumptions, 
we shall make 
$h_{\theta}$. 
We divide the 
construction into 
two cases. 

If $\theta=\gamma+1$ for some $\gamma$, 
then 
we 
apply
Proposition 
\ref{prop:amal1}
 to 
$h_{\gamma}$ 
and 
$l_{\gamma}$, 
and 
we then  obtain 
$h_{\theta}$
satisfying 
\ref{item:amal241011}
and 
\ref{item:restrict241011}
for $\theta+1$. 

If 
$\theta$ is a limit cardinal, 
then 
we define 
a metric 
$h_{\theta}$  on 
$Z_{\theta}$ by 
$h_{\theta}(x, y)=h_{\alpha}(x, y)$, 
where $\alpha<\theta$ satisfies 
that 
$x, y\in Z_{\alpha}$. 
The existence of $\alpha$ is guaranteed by 
$Z_{\theta}=\bigcup_{\alpha<\theta}Z_{\alpha}$. 
By \ref{item:amal241011}, 
the metric 
$h_{\theta}$ is well-defined. 
Similarly, 
we see that 
$h_{\theta}$
satisfies 
\ref{item:amal241011}
and 
\ref{item:restrict241011}
for $\theta+1$.

Put 
$h=h_{\kappa}$. 
From the assumptions 
 \ref{item:amal241011}
 and 
  \ref{item:restrict241011} for $\kappa$, 
  and from 
$Z_{\kappa}=X\sqcup \coprod_{i\in I}B_{i}$, 
it follows that 
$h$ 
is as required. 
\end{proof}

\subsection{Proof of Theorem \ref{thm:interpolate}}

To show Theorem \ref{thm:interpolate}, 
we first prove the case 
where the index set is a singleton. 
\begin{thm}\label{thm:ext241009}
Let
 $X$ 
 be a metrizable space,  
and let 
$A$ 
be a closed subset of
 $X$.
Then  
for every  
$d\in \met{X}$, 
and for every  
$e\in \met{A}$, 
there exists  a metric 
$m\in \met{X}$ 
such that:
\begin{enumerate}
	\item $m|_{A^2}=e$; 
	\item $\metdis_{X}(m, d)= \metdis_{A}(e, d|_{A^2})$. 
\end{enumerate}
Moreover, 
if 
$X$ 
is completely metrizable,  
and if 
$e\in \met{A}$ 
is  a complete metric, 
then we can choose
 $m\in \met{X}$ as a complete metric. 
\end{thm}
\begin{proof}
Put 
$\eta=\metdis_{A}(e, d|_{A^2})$. 
If 
$\eta=\infty$, 
then
the theorem  follows 
from 
the Hausdorff extension theorem 
\ref{thm:Hausdorff}.

We may assume 
$\eta<\infty$.
Let 
$B$ 
be a set such that 
$X\cap B=\emptyset$
and $\card(B)=\card(A)$, 
and 
let 
$\tau\colon  A\to B$ 
be a bijection. 
Put 
$Z=X\sqcup B$. 
By Proposition 
\ref{prop:key},  
we find a metric
 $h$ 
 on 
 $Z$ 
 such that
\begin{enumerate}[label=\textup{(H\arabic*)}]
	\item
	  we have $h|_{B^{2}}=(\tau^{-1})^{*}e$; 
	
	\item 
	$h|_{X^{2}}=d$; 
	
	\item\label{item:h:1016}
	for every 
	$x\in A$ 
	we have 
	$h(x, \tau(x))= \eta/2$. 
\end{enumerate}

We can take an isometric embedding 
$H\colon Z\to Y$ 
from 
$(Z, h)$ 
into a Banach space 
$(Y, \|\cdot\|_{Y})$ 
(see e.g.,  the Kuratowski embedding theorem \ref{thm:kur}). 
Define a map 
$\phi\colon  Z\to \conv(Y)$ 
by 
$\phi(x)=B(H(x), \eta/2)$. 
By Corollary 
\ref{cor:lsc}, 
the map 
$\phi$
 is lower semi-continuous. 
We define a map 
$f\colon  A\to Y$ 
by 
$f(x)=H(\tau(x))$. 
Then 
$f$ 
is continuous. 
By the property  
\ref{item:h:1016}
 of 
 $h$,  
for every 
$x\in A$ 
we have 
$f(x)\in \phi(x)$. 
Due to the Stone theorem
 \ref{thm:Stone},  
the space 
$X$ 
is paracompact. 
Thus
we can  apply  the Michael selection  theorem \ref{thm:Michael} 
to the map 
$f$, 
and hence
we obtain a continuous map
$F\colon X\to Y$ 
such that 
$F|_{A}=f$ 
and for every 
$x\in A$ 
we have 
$F(x)\in \phi(x)$. 
Note that 
$F(x)\in \phi(x)$ 
means that 
\[
\|F(x)-H(x)\|_Y\le \eta/2.
\]

Define a map 
$l\colon X\times X\to [0, \infty)$ 
 by 
$l(x, y)=\min\{e(x, y), \eta/2\}$. 
Note that 
$l\in \met{X}$. 
We consider that the product Banach space 
$Y\times C_{b}(X)$ 
is equipped with 
the max norm defined by 
$\|(x, y)\|=\max\{\|x\|_{Y}, \|y\|_{C_{b}(X)}\}$. 

Define a map 
$E\colon X\to Y\times C_{b}(X)$ 
by 
	\[
	E(x)=(F(x), l_x), 
	\]
where 
$l_{x}$ 
is a bounded function on 
$X$  
defined by 
$l_{x}(p)=l(x, p)$. 
By the Kuratowski embedding theorem 
\ref{thm:kur}, 
the map 
$L\colon  X\to C_b(X)$ 
defined by 
$L(x)=l_{x}$ 
is an isometric embedding.   
Therefore 
$E$ 
is a topological embedding. 
We also define a map 
$K\colon  X\to Y\times C_{b}(X)$ 
by 
	\[
	K(x)=(H(x), 0). 
	\]
Then, 
by the definition of the norm of 
$Y\times C_{b}(X)$,  
the map 
$K$ 
from 
$(X, d)$ 
to 
$(Y\times C_{b}(X), \|\cdot\|)$ 
is an isometry. 
Since for every 
$x\in X$ 
we have 
$\|F(x)-H(x)\|_{Y}\le \eta/2$ 
and  
$\|l_x\|_{C_{b}(X)}\le \eta/2$, 
we obtain 
\begin{align}
	\|E(x)-K(x)\|=\max\{\|F(x)-H(x)\|_{Y}, \|l_x\|_{C_{b}(X)}\}\le \eta/2.\label{al:keyinq}
\end{align}

Define a function 
$m:X^2\to [0, \infty)$ 
by 
$m(x, y)=\|E(x)-E(y)\|$, 
then 
$m$ 
is a metric on 
$X$. 
Since 
$E$
 is a topological embedding, 
we see that 
$m\in \met{X}$. 
For every 
pair  
$x, y\in A$, 
we have 
$\|F(x)-F(y)\|_{Y}=e(x, y)$ 
and 
	\[
	\|l_{x}-l_{y}\|_{C_{b}(X)}=l(x, y)\le e(x, y);
	\] 
thus we obtain 
	\[
	\|E(x)-E(y)\|=\max\{\|F(x)-F(y)\|_{Y}, \|l_{x}-l_{y}\|_{C_{b}(X)}\}=e(x, y), 
	\]
and hence 
$m|_{A^2}=e$. 
Moreover,  
we have 
$\eta\le \metdis_{X}(m, d)$. 
We also obtain the opposite inequality 
$\metdis_X(m, d)\le \eta$;
indeed, 
the inequality 
\eqref{al:keyinq}
shows that 
for every pair 
$x, y\in X$, we have 
\begin{align*}
	&|m(x, y)-d(x, y)|=\Bigl|\|E(x)-E(y)\|-\|K(x)-K(y)\|\Bigr|
	\\
	&\le \|E(y)-K(y)\|+\|E(x)-K(x)\|
	\le \eta/2 +\eta/2=\eta. 
\end{align*}
Therefore we conclude that 
$\metdis_{X}(m, d)=\eta$. 
This completes the proof of the  former part of Theorem 
\ref{thm:interpolate}. 

By the latter part of the Hausdorff theorem \ref{thm:Hausdorff}, 
we can choose $l$ as a complete metric. 
Then $m$ become a complete metric. 
This shows  the  latter part of Theorem \ref{thm:interpolate}. 
\end{proof}

\begin{proof}[Proof of Theorem \ref{thm:interpolate}]
Let 
$X$ 
be a metrizable space, and let 
$\{A_{i}\}_{i\in I}$ 
be a discrete family of closed subsets of 
$X$. 
Put $A=\bigcup_{i\in I}A_{i}$. 
Since $\{A_{i}\}_{i\in I}$ is a discrete family, 
the union set 
$A=\bigcup_{i\in I}A_{i}$ is 
also closed in 
$X$. 
We use the same notation of
Proposition 
\ref{prop:key}. 
We define 
$w=\tau^{*}h\in \met{A}$. 
note that 
$w|_{A_{i}^{2}}=e_{i}$ 
for every 
$i\in I$. 
Let us check that 
$\metdis_{A}(d|_{A^{2}}, w)=\eta$. 
In fact, 
for every 
pair 
$x, y\in A$, we have 
\begin{align*}
|d(x, y)-w(x, y)|&\le |h(x, y)-h(x, \tau(y))|+|h(x, \tau(y))-h(\tau(x), \tau(y))|\\
&\le h(y, \tau(y))+h(x, \tau(x))\le \eta/2+\eta/2=\eta. 
\end{align*}
Hence 
$\metdis_{A}(d|_{A^{2}}, w)=\eta$. 
Applying 
Theorem 
\ref{thm:ext241009}
to 
$d$ 
and 
$w$, 
we obtain 
$m\in \met{X}$ such that 
$\metdis_{X}(d, m)=\eta$ and 
$m|_{A^{2}}=w$.
This finishes the proof of Theorem \ref{thm:interpolate}. 
\end{proof}


\section{Transmissible  properties}\label{sec:trans}
In this section we discuss  transmissible properties, 
and prove Theorem \ref{thm:trans}. 
We also show that 
various properties in metric geometry are transmissible properties.  
\subsection{Proof of Theorem \ref{thm:trans}}
By the condition 
\ref{item:tp2} in Definition \ref{def:transp}, 
we obtain the following:
\begin{lem}\label{lem:here}
Let $\mathfrak{G}$ be a transmissible parameter. 
If a metric space 
$(X, d)$ 
satisfies the 
$\mathfrak{G}$-transmissible property, 
then so does every metric subspace of 
$(X, d)$. 
\end{lem}
By the virtue of Lemma \ref{lem:here}, 
we use the word ``transmissible''. 
\begin{cor}\label{cor:anti}
Let $\mathfrak{G}$ be a transmissible parameter, 
and let $(X, d)$ be a metric space. 
If there exists a metric subspace of 
$(X, d)$ 
satisfies the anti-$\mathfrak{G}$-transmissible property, 
then so does $(X, d)$. 
\end{cor}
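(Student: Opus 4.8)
The plan is to observe that this corollary is simply the contrapositive of Lemma \ref{lem:here}, rephrased through the logical relationship between a transmissible property and its anti-version. No new construction is needed; everything reduces to quantifier manipulation.

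First I would recall that, by definition, the anti-$\mathfrak{G}$-transmissible property is exactly the negation of the $\mathfrak{G}$-transmissible property. Hence the hypothesis that some metric subspace $(S, d|_{S^2})$ of $(X, d)$ satisfies the anti-$\mathfrak{G}$-transmissible property is equivalent to asserting that $(S, d|_{S^2})$ fails to satisfy the $\mathfrak{G}$-transmissible property. I would then invoke the contrapositive of Lemma \ref{lem:here}: that lemma states that if $(X, d)$ satisfies the $\mathfrak{G}$-transmissible property then every metric subspace does too, so contrapositively, if even one metric subspace fails the $\mathfrak{G}$-transmissible property, then $(X, d)$ itself must fail it. Applying this to the subspace $(S, d|_{S^2})$ furnished by the hypothesis, I conclude that $(X, d)$ fails the $\mathfrak{G}$-transmissible property, which is precisely the assertion that $(X, d)$ satisfies the anti-$\mathfrak{G}$-transmissible property.

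There is no genuine obstacle here, since the content is purely logical and the substantive work—transporting witnesses between a space and its subspaces via condition (TP2)—has already been carried out in the proof of Lemma \ref{lem:here}. The only point demanding mild care is keeping the negation aligned correctly: the $\mathfrak{G}$-transmissible property is an existential statement over $q \in Q$ (there exists a good $q$), so its negation, the anti-property, is a universal statement over $q$ (every $q$ admits a witnessing pair $(z, a)$ with $\phi^{q, X}(a, z, d) \notin F(q)$). This matches the definitions verbatim, so the contrapositive passes through cleanly and the proof is immediate.
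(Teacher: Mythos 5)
Your proof is correct and matches the paper's intent: the paper gives no separate argument for Corollary \ref{cor:anti}, presenting it as an immediate consequence of Lemma \ref{lem:here}, and your observation that it is precisely the contrapositive of that lemma (using that the anti-$\mathfrak{G}$-transmissible property is by definition the negation of the $\mathfrak{G}$-transmissible property) is exactly the intended reasoning.
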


Let $X$ be a metrizable space, 
and let 
$\mathfrak{G}=(Q, P, F, G, Z, \phi)$ 
be a transmissible  parameter. 
For $q\in Q$, 
for 
$a\in \seq(G(q), X)$ 
and for
$z\in Z$, 
we denote by 
$S( X, \mathfrak{G}, q, a, z)$ 
the set of all 
$d\in \met{X}$ 
such that 
$\phi^{q, X}(a, z, d)\in X\setminus F(q)$. 
We also denote by 
$S(X, \mathfrak{G})$ 
the set of all 
$d\in \met{X}$ 
such that 
$(X, d)$ 
satisfies the 
anti-$\mathfrak{G}$-transmissible property.

\begin{prop}\label{prop:open}
Let $X$ be a metrizable space, 
$\mathfrak{G}=(Q, P, F, G, Z, \phi)$ 
 a transmissible  parameter. 
Then 
for every 
$q\in Q$, 
for every 
$a\in \seq(G(q), X)$, 
and 
for every 
$z\in Z$, 
the set 
$S(X, \mathfrak{G}, q, a, z)$
is open in 
$\met{X}$. 
\end{prop}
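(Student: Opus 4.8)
The plan is to recognize $S(X, \mathfrak{G}, q, a, z)$ as the preimage of an open set under a continuous map, so that its openness is immediate. Fix $q \in Q$, $a \in \seq(G(q), X)$, and $z \in Z$. By definition, a metric $d \in \met(X)$ lies in $S(X, \mathfrak{G}, q, a, z)$ precisely when $\phi^{q, X}(a, z, d) \in P \setminus F(q)$. Writing $\phi^{q, X}(a, z)$ for the map $\met(X) \to P$ sending $d \mapsto \phi^{q, X}(a, z, d)$, I would record the identity
\[
S(X, \mathfrak{G}, q, a, z) = \left(\phi^{q, X}(a, z)\right)^{-1}\bigl(P \setminus F(q)\bigr),
\]
which reduces everything to two observations.

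First I would note that $P \setminus F(q)$ is open in $P$. This is built into the data of a transmissible parameter: the map $F$ takes values in $\mathcal{F}(P)$, the family of closed subsets of $P$, so $F(q)$ is closed in $P$ and its complement is open. Second, I would invoke the continuity of the evaluation map $\phi^{q, X}(a, z)$ in the metric variable. This is exactly the content of condition (TP1) in Definition \ref{def:transp}: for every $a \in \seq(G(q), X)$ and every $z \in Z$, the map $d \mapsto \phi^{q, X}(a, z, d)$ from $\met(X)$ to $P$ is continuous. Since the preimage of an open set under a continuous map is open, the displayed identity shows that $S(X, \mathfrak{G}, q, a, z)$ is open in $\met(X)$, which finishes the argument.

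I do not expect any genuine obstacle here; the statement is essentially a repackaging of the definitions. The only point requiring care is to identify $S(X, \mathfrak{G}, q, a, z)$ correctly as a \emph{single} preimage—with $q$, $a$, and $z$ all held fixed—and then to verify that the two ingredients being combined are precisely the two features encoded in the notion of a transmissible parameter: the closedness of $F(q)$ (from $F \colon Q \to \mathcal{F}(P)$) and the continuity in (TP1). It is worth emphasizing that this proposition treats $a$ and $z$ as fixed; the passage to the full set $S(X, \mathfrak{G})$ of anti-$\mathfrak{G}$-transmissible metrics, where one quantifies over all admissible $a$ and $z$, is a separate matter and will produce the $G_\delta$ structure rather than openness.
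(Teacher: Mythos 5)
Your proof is correct and is essentially the same as the paper's: the paper also fixes $q$, $a$, $z$ and concludes openness of $S(X, \mathfrak{G}, q, a, z)$ from the continuity of $\phi^{q,X}(a,z)$ guaranteed by (TP1), with the preimage identity and the closedness of $F(q)$ left implicit. Your write-up merely makes those two implicit ingredients explicit (and quietly corrects the paper's typo $X\setminus F(q)$ to $P\setminus F(q)$), which is fine.
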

\begin{proof}
Fix $q\in Q$, 
$a\in \seq(G(q), X)$ 
and 
$z\in Z$. 
Since the map 
$\phi^{q, X}(a, z)\colon \met{X}\to P$ 
is continuous
and since 
$X\setminus F(q)$
 is open in 
$P$, 
 the set 
 $S(X, \mathfrak{G}, q, a, z)$ 
 is open in 
 $\met{X}$. 
\end{proof}
\begin{cor}\label{cor:open}
Let $X$ be a metrizable space, 
$\mathfrak{G}=(Q, P, F, G, Z, \phi)$ 
 a transmissible  parameter. 
Then 
the set 
$S(X, \mathfrak{G})$ 
is 
$G_{\delta}$ 
in 
$\met{X}$.
Moreover, 
if the set 
$Q$ 
is finite, 
then 
$S(X, \mathfrak{G})$ 
is open in 
$\met{X}$. 
\end{cor}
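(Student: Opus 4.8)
The plan is to unwind the logical structure of the anti-$\mathfrak{G}$-transmissible property into a union--intersection of the basic sets $S(X, \mathfrak{G}, q, a, z)$, and then invoke Proposition \ref{prop:open} together with the countability of $Q$.

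By the definition of the anti-$\mathfrak{G}$-transmissible property, a metric $d\in \met(X)$ belongs to $S(X, \mathfrak{G})$ precisely when for every $q\in Q$ there exist $z\in Z$ and $a\in \seq(G(q), X)$ with $\phi^{q, X}(a, z, d)\in X\setminus F(q)$; that is, with $d\in S(X, \mathfrak{G}, q, a, z)$. Translating the quantifiers into set operations, I obtain
\[
S(X, \mathfrak{G}) = \bigcap_{q\in Q}\left( \bigcup_{z\in Z}\ \bigcup_{a\in \seq(G(q), X)} S(X, \mathfrak{G}, q, a, z)\right).
\]
For each fixed $q\in Q$, Proposition \ref{prop:open} guarantees that every set $S(X, \mathfrak{G}, q, a, z)$ is open in $\met(X)$, so the inner double union---being an arbitrary union of open sets---is open, regardless of the (possibly uncountable) cardinalities of $Z$ and $\seq(G(q), X)$. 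Writing $O_q$ for this open set, I have $S(X, \mathfrak{G})=\bigcap_{q\in Q}O_q$. Since $Q$ is at most countable by Definition \ref{def:transp}, this exhibits $S(X, \mathfrak{G})$ as a countable intersection of open sets, hence $G_{\delta}$ in $\met(X)$.

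For the moreover clause, if $Q$ is finite then $\bigcap_{q\in Q}O_q$ is a finite intersection of open sets, which is itself open; this yields the second assertion at once.

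There is no serious obstacle here, and the argument is essentially a bookkeeping of quantifiers. The only point requiring genuine attention is the asymmetry in that bookkeeping: the existential quantifiers over $z$ and $a$ become unions, and since openness is preserved under \emph{arbitrary} unions their possibly uncountable range causes no difficulty, whereas the universal quantifier over $q$ becomes an intersection, and it is precisely the standing hypothesis that $Q$ is at most countable that confines this intersection to the $G_{\delta}$ class (and the additional hypothesis that $Q$ is finite that promotes it to an open set).
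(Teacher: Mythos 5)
Your proposal is correct and follows exactly the paper's own argument: the same decomposition $S(X, \mathfrak{G})=\bigcap_{q\in Q}\bigcup_{z\in Z}\bigcup_{a\in \seq(G(q), X)}S(X, \mathfrak{G}, q, a, z)$ combined with Proposition \ref{prop:open}, with countability of $Q$ giving the $G_{\delta}$ conclusion and finiteness giving openness. The only difference is that you spell out the quantifier bookkeeping that the paper leaves implicit.
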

\begin{proof}
By the definitions of 
$S(X, \mathfrak{G})$ 
and 
$S(X, \mathfrak{G}, q, a, z)$, 
we have 
\[
S(X, \mathfrak{G})
=
\bigcap_{q\in Q}
\bigcup_{a\in \seq(G(q), X)}
\bigcup_{z\in Z}
S(X, \mathfrak{G}, q, a, z). 
\]
This equality and  Proposition
 \ref{prop:open} prove
the lemma. 
\end{proof}

We say that a topological space is an
 \emph{$(\omega_{0}+1)$-space}
if it is homeomorphic to the one-point compactification of the countable 
discrete topological  space. 
Namely, it is  homeomorphic to the 
ordinal 
$\omega_{0}+1$. 
\begin{lem}\label{lem:sing}
Let 
$\mathfrak{G}$
 be a singular transmissible parameter. 
Then there exists an 
$(\omega_{0}+1)$-metric space 
with arbitrary small diameter 
satisfying the 
 anti-$\mathfrak{G}$-transmissible property. 
\end{lem}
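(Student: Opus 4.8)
The plan is to realize, inside a single convergent sequence, all the witnesses produced by singularity. Since $Q$ is at most countable, first I would fix an enumeration $n\mapsto q^{(n)}$ of $Q$ that meets every element of $Q$; if $Q$ is finite I let this enumeration cycle through $Q$, so that each $q\in Q$ occurs as $q^{(n)}$ for infinitely many $n$, which is what will force the resulting space to be countably infinite. Fix $\epsilon\in(0,\infty)$. For each $n\in\nn$, apply the singularity of $\mathfrak{G}$ to $q=q^{(n)}$ with parameter $\epsilon 2^{-n-1}$ to obtain $z_n\in Z$ and a finite metric space $(R_n,d_n)$ with $\delta_{d_n}(R_n)\le \epsilon 2^{-n-1}$, with $\card(R_n)\in G(q^{(n)})$, and with $\phi^{q^{(n)},R_n}(R_n,z_n,d_n)\notin F(q^{(n)})$. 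Passing to isometric copies, I may assume the $R_n$ are pairwise disjoint and disjoint from a fixed new point $*$, and I set $W=\{*\}\cup\bigcup_{n\in\nn}R_n$.

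Next I would equip $W$ with a ``star'' metric that keeps each cluster $R_n$ isometric while pushing the clusters toward $*$. Put $t_n=\epsilon 2^{-n}$, so that $t_n\ge \delta_{d_n}(R_n)/2$. Define a symmetric $D\colon W^2\to[0,\infty)$ by $D(x,x)=0$, by $D(*,x)=t_n$ and $D(x,y)=d_n(x,y)$ for $x,y\in R_n$, and by $D(x,y)=t_m+t_n$ for $x\in R_m$, $y\in R_n$ with $m\neq n$. The inequality $t_n\ge\delta_{d_n}(R_n)/2$ together with the fact that every cross-cluster distance is computed ``through $*$'' reduces the triangle inequality to a routine case analysis (entirely parallel to Propositions \ref{prop:amal1} and \ref{prop:amal2}), so $D$ is a metric on $W$.

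I would then identify the topology and bound the diameter. Because $t_n\to 0$, every neighbourhood of $*$ contains all but finitely many clusters, so $*$ is the unique accumulation point, while each $x\in R_n$ is isolated, its distance to every other point being at least $\min\{t_n,\min_{y\neq x}d_n(x,y)\}>0$; as $W$ is countably infinite and compact, it is homeomorphic to the one-point compactification of a countable discrete space, i.e.\ an $(\omega_0+1)$-space. Moreover every value of $D$ is bounded by $\max\{\sup_n\delta_{d_n}(R_n),\ \sup_{m\neq n}(t_m+t_n)\}\le \epsilon$, so $\delta_D(W)\le\epsilon$, giving arbitrarily small diameter as $\epsilon$ varies.

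Finally I would verify the anti-$\mathfrak{G}$-transmissible property via condition (TP2). Fix $q\in Q$ and choose $n$ with $q^{(n)}=q$; let $a_n$ be the injective sequence enumerating the finite set $R_n$, so that $a_n\in\seq(G(q),R_n)\subset\seq(G(q),W)$ because $\card(R_n)\in G(q)$. Since $R_n\subset W$ and $D|_{R_n^2}=d_n$, condition (TP2) yields $\phi^{q,W}(a_n,z_n,D)=\phi^{q,R_n}(a_n,z_n,d_n)\notin F(q)$, which is exactly the witness required for $q$; as $q$ was arbitrary, $(W,D)$ satisfies the anti-$\mathfrak{G}$-transmissible property. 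The only genuine care points are the bookkeeping that guarantees $W$ is countably infinite (handled by cycling the enumeration when $Q$ is finite) and the coupled choice of the diameter bounds $\epsilon 2^{-n-1}$ and the separations $t_n=\epsilon 2^{-n}$; the remaining metric and topology verifications are routine, and that coupling is the one mild obstacle.
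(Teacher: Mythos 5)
Your proof is correct and follows essentially the same route as the paper: apply singularity along an enumeration of $Q$ with geometrically decreasing diameters, glue the resulting finite witness spaces into a star-shaped space converging to one added point (the one-point compactification of their disjoint union), and transfer each witness back via (TP2). The only differences are cosmetic --- you use the sum $t_m+t_n$ where the paper uses $\epsilon\cdot\max\{2^{-i},2^{-j}\}$ for cross-cluster distances, and you explicitly cycle the enumeration when $Q$ is finite, a detail the paper glosses over by tacitly indexing $Q$ by $\mathbb{N}$.
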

\begin{proof}
Let 
$\mathfrak{G}=(Q, P, F, G, Z, \phi)$. 
Fix 
$\epsilon\in (0, \infty)$. 
By the singularity of 
$\mathfrak{G}$, 
there exists a sequence 
$\{(R_{i}, d_{i})\}_{i\in \nn}$ 
of finite metric spaces  such that for each 
$i\in \nn$ 
there exist
$q_{i}\in Q$ 
and 
$z_{i}\in Z$ 
satisfying
\begin{enumerate}[label=\textup{(R\arabic*)}]
	
	\item\label{item:ep1017}
	 $\delta_{d_{i}}(R_{i})\le \epsilon\cdot2^{-i}$; 
	
	\item\label{item:rin1016}
	 $\card(R_{i})\in G(q_{i})$; 
	 
	\item\label{item:rphi1016}
	$\phi^{q_{i}, R_{i}}(R_{i}, z_{i}, d_{i})\in X\setminus F(q_{i})$. 
\end{enumerate}
Put
	\[
	L=\{\infty\}\sqcup \coprod_{i\in \nn}R_{i}, 
	\]
and define a metric 
$d_{L}$ 
on 
$L$ 
by 
\[
d_{L}(x, y)=
\begin{cases}
		d_{i}(x,y) & \text{if $x,y\in R_{i}$ for some $i$;}\\
		\epsilon\cdot \max\{2^{-i}, 2^{-j}\} & \text{if $x\in R_{i}, y\in R_{j}$ for some $i\neq j$; }\\
		\epsilon\cdot2^{-i} & \text{if $x=\infty, y\in R_{i}$ for some $i$;}\\
		\epsilon\cdot2^{-i} & \text{if $x\in R_{i}, y=\infty$ for some $i$.}
	\end{cases}
\]
On account of 
\ref{item:ep1017}, 
$d_{L}$ is actually a metric  (compare with \cite[Definition 3.3]{Ishiki2019}). 
We also observe that the space 
$(L, d_{L})$ 
is an 
$(\omega_0+1)$-metric space with
 $\delta_{d_{L}}(L)\le \epsilon$. 
By the properties 
\ref{item:rin1016} 
and 
\ref{item:rphi1016}
 of 
$\{(R_{i}, d_{i})\}_{i\in \nn}$, 
the metric space 
$(L, d_{L})$ 
satisfies the 
anti-$\mathfrak{G}$-transmissible property. 
\end{proof}
\begin{rmk}
It is also true that 
a transmissible parameter 
$\mathfrak{G}$ 
is singular if and only if 
there exists an 
$(\omega_0+1)$-metric space 
with arbitrary small diameter 
satisfying  the 
anti-$\mathfrak{G}$-transmissible property. 
\end{rmk}

Let 
$\mathfrak{G}$
 be a transmissible parameter. 
For a non-discrete metrizable space
 $X$, 
and for an
 $(\omega_0+1)$-subspace 
 $R$ 
 of
  $X$, 
we denote by 
$T(X, R, \mathfrak{G})$ 
the set of all 
$d\in \met{X}$
for which 
$(R, d|_{R^2})$ 
satisfies the 
anti-$\mathfrak{G}$-transmissible 
property.

As a consequence of 
Theorem \ref{thm:ext241009}, 
we obtain the following:
\begin{prop}\label{prop:dense}
Let 
$\mathfrak{G}=(Q, P, F, G, Z, \phi)$ 
be a singular transmissible parameter. 
Then for every non-discrete metrizable space
 $X$, 
and for every 
$(\omega_0+1)$-subspace 
$R$ 
of 
$X$, 
the set 
$T(X, R, \mathfrak{G})$ 
is dense in 
$\met{X}$.  
\end{prop}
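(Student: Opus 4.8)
The plan is to reduce the density statement to the extension theorem with exact distance control, Corollary \ref{cor:ext}, by feeding it a metric on $R$ that carries the anti-$\mathfrak{G}$-transmissible property yet differs only slightly from the ambient metric. Fix $d_0 \in \met(X)$ and $\epsilon > 0$; I must produce $m \in T(X, R, \mathfrak{G})$ with $\mathcal{D}_X(m, d_0) < \epsilon$. Write $f = d_0|_{R^2}$, and note that $R$, being an $(\omega_0+1)$-space, is compact, hence closed in the metrizable (so Hausdorff) space $X$; thus Corollary \ref{cor:ext} will be applicable with $A = R$. Let $p$ denote the unique non-isolated point of $R$ and $\{r_n\}$ its isolated points, so $r_n \to p$ and therefore $f(r_n, p) \to 0$.

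First I would invoke Lemma \ref{lem:sing} to obtain, for a parameter $\epsilon' \in (0, \epsilon)$ to be fixed below, an $(\omega_0+1)$-metric space $(L, d_L)$ of diameter at most $\epsilon'$ satisfying the anti-$\mathfrak{G}$-transmissible property. The idea is to plant an isometric copy of $(L, d_L)$ deep inside $R$, near $p$, where $f$ is already tiny. Concretely, since $f(r_n, p) \to 0$, I can choose $N$ so large that the $f$-diameter of the tail $R' := \{p\} \cup \{\, r_n \mid n \ge N\,\}$ is at most $\epsilon'$. This $R'$ is again an $(\omega_0+1)$-space, and its complement in $R$ consists of isolated points, so $R'$ is closed in $R$. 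As any two $(\omega_0+1)$-spaces are homeomorphic (by a homeomorphism necessarily matching their unique non-isolated points), I transport $d_L$ along a homeomorphism $L \to R'$ to obtain $e' \in \met(R')$ with $(R', e')$ isometric to $(L, d_L)$; in particular $e'$ has diameter at most $\epsilon'$, and $(R', e')$ satisfies the anti-$\mathfrak{G}$-transmissible property. Here one transfers the property across the isometry by realizing the finite witnesses supplied by Lemma \ref{lem:sing} as genuine subspaces and applying condition (TP2).

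Next I would glue $e'$ back to a metric on all of $R$ without disturbing closeness to $f$. Applying Corollary \ref{cor:ext} inside the metrizable space $R$, with the closed subset $R'$, the ambient metric $f$, and the target $e' \in \met(R')$, yields $e \in \met(R)$ with $e|_{R'^2} = e'$ and
\[
\mathcal{D}_R(e, f) = \mathcal{D}_{R'}(e', f|_{R'^2}).
\]
Because $e'$ and $f|_{R'^2}$ both take values in $[0, \epsilon']$, the right-hand side is at most $\epsilon'$. Now $(R', e|_{R'^2}) = (R', e')$ is a metric subspace of $(R, e)$ satisfying the anti-$\mathfrak{G}$-transmissible property, so Corollary \ref{cor:anti} shows that $(R, e)$ itself satisfies it. Finally, a second application of Corollary \ref{cor:ext}, now in $X$ with the closed subset $R$, the ambient metric $d_0$, and the target $e \in \met(R)$, produces $m \in \met(X)$ with $m|_{R^2} = e$ and $\mathcal{D}_X(m, d_0) = \mathcal{D}_R(e, d_0|_{R^2}) = \mathcal{D}_R(e, f) \le \epsilon' < \epsilon$. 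Since $(R, m|_{R^2}) = (R, e)$ satisfies the anti-$\mathfrak{G}$-transmissible property, we have $m \in T(X, R, \mathfrak{G})$, and $d_0$ has been approximated within $\epsilon$; this proves density.

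The step I expect to be the main obstacle is the simultaneous control of two quantities: guaranteeing that the planted witnesses genuinely force the anti-property on $(R, e)$, and keeping $\mathcal{D}_X(m, d_0)$ small. The key point resolving both is that the anti-property constrains only finitely many points at a time and, by singularity through Lemma \ref{lem:sing}, admits witnesses of arbitrarily small diameter; these can be hidden in a tail of $R$ near $p$, precisely where $f = d_0|_{R^2}$ is itself within $\epsilon'$ of zero. Thus both $e'$ and $f|_{R'^2}$ live in the same tiny interval $[0, \epsilon']$, and the exact identity $\mathcal{D}_X(m, d_0) = \mathcal{D}_R(e, f)$ furnished by Corollary \ref{cor:ext} converts this into the required approximation. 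The one delicate point to handle with care is the transfer of the anti-property across the isometry $L \cong R'$, which is why the realization of the witnesses as honest subspaces together with condition (TP2) must be made explicit.
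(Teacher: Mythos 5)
Your proof is correct and follows essentially the same route as the paper's: obtain a small-diameter $(\omega_0+1)$-space with the anti-$\mathfrak{G}$-transmissible property from Lemma \ref{lem:sing}, plant it on a small-diameter tail of $R$ near its limit point, extend with exact distance control via Corollary \ref{cor:ext}, and conclude by heredity (Corollary \ref{cor:anti}). The only cosmetic difference is that you extend in two steps (from $R'$ to $R$, then from $R$ to $X$) where the paper extends once, directly from the tail subspace of $R$ to $X$; both arguments rest on the same transfer of the anti-property along the homeomorphism with the planted metric, which you rightly flag and which the paper uses implicitly.
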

\begin{proof}
Fix 
$d\in \met{X}$ 
and 
$\epsilon\in (0, \infty)$.  
From the singularity of 
$\mathfrak{G}$,  
by Lemma 
\ref{lem:sing}, 
it follows that there exists an 
$(\omega_0+1)$-metric space 
$(L, e)$
satisfying the 
anti-$\mathfrak{G}$-transmissible  property 
and  
$\delta_{e}(L)<\epsilon/2$. 
Since 
$R$ 
is an 
$(\omega_0+1)$-space, 
there exists an 
$(\omega_0+1)$ 
subspace 
$S$ 
of 
$R$ 
with 
$\delta_{d}(S)<\epsilon/2$. 
Let 
$\tau\colon  S\to L$ 
be a homeomorphism. 
By the definitions of 
$S$ 
and 
$e$, 
we have 
$\metdis_{S}(d|_{S^2}, \tau^{*}e)<\epsilon$. 
Theorem 
\ref{thm:ext241009}
guarantees 
the existence of  a metric 
$m\in \met{X}$ 
such that 
$m|_{S^2}=\tau^{*}e$ 
and 
$\metdis_{X}(m, d)<\epsilon$. 
Due to  Corollary 
\ref{cor:anti}, 
the metric space  
$(R, m|_{R^{2}})$ 
satisfies the 
anti-$\mathfrak{G}$-transmissible property. 
Since 
$d$ 
and 
$\epsilon$
 are arbitrary, 
we conclude that  
$T(X, R, \mathfrak{G})$
 is dense in 
$\met{X}$. 
\end{proof}

\begin{proof}[Proof of Theorem \ref{thm:trans}]
Let 
$X$ 
be a non-discrete metrizable space,  
and 
let 
$\mathfrak{G}$ 
be a singular transmissible parameter. 
Since 
$X$ 
is non-discrete, 
there exists an
 $(\omega_0+1)$-subspace 
$R$ 
of 
$X$. 
By the definitions, we have 
	\[
	T(X, R, \mathfrak{G})\subset S(X, \mathfrak{G}). 
	\] 
By Proposition
 \ref{prop:dense} 
 and 
 Corollary \ref{cor:open}, 
 the set 
 $S(X, \mathfrak{G})$ 
 is dense 
 $G_{\delta}$ 
 in 
 $\met{X}$. 
 This finishes the proof. 
\end{proof}

For a complete metrizable space 
$X$, 
we denote by 
$\cm(X)$ the set of 
all complete metrics in 
$\met{X}$. 
From  the latter part of
Theorem 
\ref{thm:ext241009}, 
we deduce the following:
\begin{thm}\label{thm:cm1}
Let 
$\mathfrak{G}$ 
be a singular transmissible parameter. 
For every non-discrete completely metrizable space $X$, 
the set of all 
$d\in \cm(X)$ 
for which 
$(X, d)$ 
satisfies 
the anti-$\mathfrak{G}$-transmissible property 
is dense 
$G_{\delta}$ 
in 
$\cm(X)$.  
\end{thm}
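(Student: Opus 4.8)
The plan is to rerun the proof of Theorem \ref{thm:trans} verbatim, but carried out inside the subspace $\cm(X)$ of $\met(X)$; the only new ingredient is the completeness-preserving part of Corollary \ref{cor:ext}. As in Section \ref{sec:trans}, write $S(X,\mathfrak{G})$ for the set of all $d\in\met(X)$ for which $(X,d)$ satisfies the anti-$\mathfrak{G}$-transmissible property. The set appearing in the theorem is precisely $S(X,\mathfrak{G})\cap\cm(X)$, and I would establish separately that it is $G_{\delta}$ in $\cm(X)$ and that it is dense in $\cm(X)$.

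For the $G_{\delta}$ part I would simply use that relative $G_{\delta}$-ness is inherited by subspaces. By Corollary \ref{cor:open} we may write $S(X,\mathfrak{G})=\bigcap_{n}U_n$ with each $U_n$ open in $\met(X)$; then $S(X,\mathfrak{G})\cap\cm(X)=\bigcap_{n}(U_n\cap\cm(X))$, and each $U_n\cap\cm(X)$ is open in the subspace topology of $\cm(X)$. Hence $S(X,\mathfrak{G})\cap\cm(X)$ is $G_{\delta}$ in $\cm(X)$. Neither singularity nor completeness of $X$ enters here.

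For density I would prove the complete-metric analogue of Proposition \ref{prop:dense}. Fix $d\in\cm(X)$ and $\epsilon\in(0,\infty)$. Exactly as in the proof of Proposition \ref{prop:dense}, Lemma \ref{lem:sing} supplies an $(\omega_0+1)$-metric space $(L,e)$ satisfying the anti-$\mathfrak{G}$-transmissible property with $\delta_e(L)<\epsilon/2$; since $X$ is non-discrete it contains an $(\omega_0+1)$-subspace, inside which I can choose an $(\omega_0+1)$-subspace $S$ with $\delta_d(S)<\epsilon/2$; and a homeomorphism $\tau\colon S\to L$ yields $\mathcal{D}_S(d|_{S^2},\tau^{*}e)<\epsilon$. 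The one extra observation is that an $(\omega_0+1)$-space is compact, so $S$ is closed in the metrizable space $X$ and the metric $\tau^{*}e\in\met(S)$ is automatically complete (every metric inducing the topology of a compact space is complete). Since $X$ is completely metrizable, I may then invoke the moreover clause of Corollary \ref{cor:ext} with $A=S$ and complete boundary metric $\tau^{*}e$ to obtain a \emph{complete} metric $m\in\cm(X)$ with $m|_{S^2}=\tau^{*}e$ and $\mathcal{D}_X(m,d)<\epsilon$. By Corollary \ref{cor:anti} the space $(X,m)$ satisfies the anti-$\mathfrak{G}$-transmissible property, so $m\in S(X,\mathfrak{G})\cap\cm(X)$; as $d$ and $\epsilon$ were arbitrary, density follows.

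Combining the two parts shows that $S(X,\mathfrak{G})\cap\cm(X)$ is dense $G_{\delta}$ in $\cm(X)$, which is the assertion. I expect no serious obstacle: the whole difficulty is concentrated in the extension step, and it is dissolved precisely because the target subspace $S$ is compact, so the metric $\tau^{*}e$ is complete and the completeness-preserving form of Corollary \ref{cor:ext} applies without disturbing the $\mathcal{D}_X$-estimate. In other words, the passage from Theorem \ref{thm:trans} to Theorem \ref{thm:cm1} costs nothing beyond replacing each appeal to the former part of Corollary \ref{cor:ext} by its latter part.
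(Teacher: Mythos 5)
Your proposal is correct and takes essentially the same route as the paper: the paper obtains Theorem \ref{thm:cm1} precisely by rerunning the argument for Theorem \ref{thm:trans} with the density step now powered by the latter (completeness-preserving) part of Corollary \ref{cor:ext}, which is exactly what you do. Your added observation that the $(\omega_0+1)$-subspace $S$ is compact, so that $\tau^{*}e$ is automatically complete and the moreover clause applies, correctly supplies the one detail the paper leaves implicit.
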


\subsection{The doubling property and the uniform disconnectedness}
For a metic space 
$(X, d)$ 
and for a subset 
$A$ 
of 
$X$, 
we set  
	\[
	\alpha_{d}(A)=\inf\{\, d(x, y)\mid \text{$x, y\in A$ and $x\neq y$}\, \}.
	\] 
A metric space 
$(X, d)$ 
is said to be 
\emph{doubling} 
if there exist 
$C\in (0, \infty)$ 
and 
$\lambda\in (0, \infty)$ 
such that  for every finite subset 
$A$ 
of 
$X$  
we have 
	\[
	\card(A)\le C\left(\frac{\delta_{d}(A)}{\alpha_{d}(A)}\right)^{\lambda}. 
	\]
Note that 
$(X, d)$ 
is doubling if and only if 
$(X, d)$ 
has finite Assouad dimension 
(see e.g., 
 \cite[Section 10]{MR1800917}). 

By the definitions of the topology of 
$\met{X}$,  
$\alpha_{d}$ 
and 
$\delta_{d}$,  
we obtain:
\begin{lem}\label{lem:bdcon}
Let 
$X$ 
be a metrizable space. 
 Fix  a finite subset
  $A$ 
  of 
  $X$.  
Then maps 
$\alpha_{*, A}, \delta_{*, A}\colon \met{X}\to \rr$
 defined by 
$\alpha_{*, A}(d)=\alpha_{d}(A)$ 
and 
$\delta_{*, A}(d)=\delta_{d}(A)$ 
is continuous. 
\end{lem}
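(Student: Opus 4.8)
The plan is to express both $D$ and $B$ as finite maxima and minima of evaluation functionals, each of which is $1$-Lipschitz for $\mathcal{D}_X$, and then invoke the elementary fact that a finite maximum or minimum of continuous functions is continuous.

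First I would introduce, for each ordered pair $(x,y)\in A^2$, the evaluation functional $\mathrm{ev}_{x,y}\colon\met(X)\to\rr$ given by $\mathrm{ev}_{x,y}(d)=d(x,y)$. The crucial observation is that for all $d,e\in\met(X)$ we have
\[
|\mathrm{ev}_{x,y}(d)-\mathrm{ev}_{x,y}(e)|=|d(x,y)-e(x,y)|\le\sup_{(p,q)\in X^2}|d(p,q)-e(p,q)|=\mathcal{D}_X(d,e),
\]
so each $\mathrm{ev}_{x,y}$ is $1$-Lipschitz, hence continuous, with respect to $\mathcal{D}_X$. When $\mathcal{D}_X(d,e)=\infty$ the inequality is vacuous, so this argument is insensitive to the fact that $\mathcal{D}_X$ is valued in $[0,\infty]$.

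Next I would use that $A$ is finite. Then $D(d)=\delta_d(A)=\max\{\mathrm{ev}_{x,y}(d):(x,y)\in A^2\}$ and $B(d)=\alpha_d(A)=\min\{\mathrm{ev}_{x,y}(d):(x,y)\in A^2,\ x\neq y\}$ are a finite maximum and a finite minimum, respectively, of the continuous functions $\mathrm{ev}_{x,y}$. Since a finite maximum or minimum of continuous real-valued functions is continuous, both $D$ and $B$ are continuous, which is the assertion.

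I do not expect any serious obstacle here: the entire content is the $1$-Lipschitz bound combined with the finiteness of $A$. The only point requiring mild care is that $B$ be real-valued, which forces the index set $\{(x,y)\in A^2:x\neq y\}$ to be non-empty; this is guaranteed whenever $A$ has at least two points (as is implicit in the doubling definition, where $\alpha_d(A)$ appears in a denominator), and in any case it does not affect the continuity argument once this finite index set is fixed.
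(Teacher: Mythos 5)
Your proof is correct and coincides with the paper's (implicit) argument: the paper states this lemma without a written proof, appealing precisely to the facts that each evaluation map $d\mapsto d(x,y)$ is $1$-Lipschitz with respect to $\mathcal{D}_X$ and that $\delta_d(A)$ and $\alpha_d(A)$ are, for finite $A$, a finite maximum and a finite minimum of such maps over pairs in $A^2$. Your additional remarks on the $[0,\infty]$-valued distance $\mathcal{D}_X$ and on the non-emptiness of the index set for $B$ are sound and only sharpen the same argument.
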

\begin{prop}\label{prop:doubling:241018}
The doubling property on metric spaces is a transmissible  property with a singular transmissible parameter. 
\end{prop}
\begin{proof}
Define a map 
$F_{D}\colon  (\qq_{>0})^2\to \mathcal{F}((\rr_{> 0})^{2})$
by 
\[
	F_{D}((q_1, q_2))
	=
	\{\, (x, y)\in (\rr_{> 0})^2\mid x\le q_{1}\cdot y^{q_{2}}\, \}, 
\]
and define a constant map 
$G_{D}\colon  (\qq_{>0})^{2}\to \mathcal{P}(\nn)^{*}$
by 
$G_{D}(q)=[2, \infty)$.
Put 
$Z_{D}=\{1\}$. 
For each metrizable space 
$X$, 
and for each 
$q\in (\qq_{>0})^2$, 
define a map 
$\phi_{D}^{q, X}\colon  
\seq(G(q), X)\times Z_{D}\times \met{X}\to (\rr_{> 0})^2$
by 
	\[
	\phi_{D}^{q, X}(\{a_{i}\}_{i=1}^{N}, 1, d)=
	\left(
	N, 
	\frac{\delta_{d}(\{\, a_{i}\mid i\in \{1, \dots, N\}\, \})}{\alpha_{d}(\{\, a_{i}\mid i\in \{1, \dots, N\}\, \})}
	\right). 
	\]
Let 
$\mathfrak{DB}=((\qq_{>0})^2, (\rr_{>0})^{2}, F_{D}, G_{D}, \{1\}, \phi_{D})$. 
Then 
$\mathfrak{DB}$ 
satisfies 
the condition
\ref{item:tp2}
in Definition 
\ref{def:transp}. 
By the Lemma
 \ref{lem:bdcon}, 
we see that
 $\mathfrak{DB}$
  satisfies the condition 
  \ref{item:tp1}. 
Hence
 $\mathfrak{DB}$
  is a transmissible  parameter. 
The
 $\mathfrak{DB}$-transmissible  property
  is equivalent to 
the doubling property. 
We next prove that 
$\mathfrak{DB}$ 
is singular. 
For  
$q=(q_{1}, q_{2})\in (\qq_{>0})^{2}$ 
and for 
$\epsilon\in (0, \infty)$, 
we denote by  
$(R_{q}, d_{q})$ 
a finite metric space with 
$\card(R_q)>q_{1}+1$
and 
$d_{q}(x, y)=\epsilon$
whenever $x\neq y$.  
Then 
$\delta_{d_q}(R_q)=\epsilon$,  
and 
	\[
	\phi^{q, R_q}(R_q, 1, d_q)=(\card(R_q), 1)\not\in F_{D}(q).
	\] 
This implies the proposition. 
\end{proof}
\begin{rmk}
Let 
$\zz$ 
be the set of all integers with discrete topology, 
and let 
$h\in \met{\zz}$ be 
the relative metric on 
$\zz$ 
induced from the Euclidean metric on 
 $\rr$. 
Then
 $h$ 
 has a neighborhood
  $U$ 
  in 
$\met{X}$ 
such that for every 
$d\in U$ 
the space  
$(\zz, d)$
is doubling. 
\end{rmk}

A metric space 
$(X, d)$ 
is said to be 
\emph{uniformly disconnected} 
if there exists  
$\delta\in (0, 1)$
such that if a finite sequence 
$\{z_{i}\}_{i=1}^{N}$ 
in 
$X$ 
satisfies 
$\max_{i}d(z_{i}, z_{i+1})< \delta d(z_{1}, z_{N})$, 
then we have 
$N=1$. 
Namely, the inequality  $N>1$
implies  $\max_{i}d(z_{i}, z_{i+1})\ge \delta d(z_{1}, z_{N})$. 
Note that a metric space is uniformly disconnected 
if and only if 
it is bi-Lipschitz to an ultrametric space 
(see e.g.,  \cite[Lemma 5.1.10]{MR2662522}).

By the definition of the topology of 
$\met{X}$, 
we obtain:
\begin{lem}\label{lem:dcon}
Let 
$X$ 
be a metrizable space. 
Fix two points 
$a$, $b$ 
in 
$X$. 
Then a map 
$f\colon \met{X}\to \rr$ 
defined by 
$f(d)=d(a, b)$ 
is 
continuous.  
\end{lem}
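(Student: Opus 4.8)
The plan is to show that $f$ is $1$-Lipschitz with respect to the metric $\mathcal{D}_X$ on $\met(X)$, from which continuity follows at once. The single observation driving the whole argument is that $|f(d)-f(e)|=|d(a,b)-e(a,b)|$ is the absolute value of just one term in the supremum defining $\mathcal{D}_X$, so it cannot exceed that supremum.

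Concretely, I would fix $d,e\in \met(X)$ and note that the pair $(a,b)$ belongs to $X^2$. Hence, directly from the definition
\[
\mathcal{D}_X(d,e)=\sup_{(x,y)\in X^2}|d(x,y)-e(x,y)|,
\]
we obtain
\[
|f(d)-f(e)|=|d(a,b)-e(a,b)|\le \mathcal{D}_X(d,e).
\]
This exhibits $f$ as a $1$-Lipschitz map from $(\met(X),\mathcal{D}_X)$ to $\rr$, and a Lipschitz map between metric spaces is continuous. In $\epsilon$--$\delta$ terms, given $\epsilon\in(0,\infty)$ one simply takes $\delta=\epsilon$: for every $e$ with $\mathcal{D}_X(d,e)<\delta$ the displayed inequality gives $|f(d)-f(e)|<\epsilon$.

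I expect no genuine obstacle here. The only point deserving a moment's care is that $\mathcal{D}_X$ is valued in $[0,\infty]$ rather than in $[0,\infty)$, but this is harmless: the Lipschitz bound $|f(d)-f(e)|\le\mathcal{D}_X(d,e)$ remains valid in the extended reals, and the balls $\{\,e\in\met(X)\mid \mathcal{D}_X(d,e)<\delta\,\}$ with finite $\delta$ still form a neighborhood basis at $d$ in the topology generated by $\mathcal{D}_X$, so the argument above applies verbatim.
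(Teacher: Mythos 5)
Your proof is correct and matches the paper's intent: the paper states this lemma without proof, as an immediate consequence of the definition of the topology on $\met(X)$, and your $1$-Lipschitz estimate $|d(a,b)-e(a,b)|\le \mathcal{D}_X(d,e)$ is exactly the justification being taken for granted. Your remark about $\mathcal{D}_X$ being $[0,\infty]$-valued is a sensible extra precaution but introduces no real difference in the argument.
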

\begin{prop}\label{prop:ud:241018}
The uniform disconnectedness on metic spaces 
is a transmissible  property with a singular parameter. 
\end{prop}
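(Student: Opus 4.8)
The plan is to imitate the treatment of the doubling property and produce an explicit singular transmissible parameter whose associated property is uniform disconnectedness. The first step is to recast the definition in a chain-free, quantitative form: I would show that $(X,d)$ is uniformly disconnected if and only if there is a \emph{rational} $\delta\in(0,1)$ such that for every finite injective sequence $\{a_i\}_{i=1}^N$ in $X$ with $N\ge 2$ we have $\max_{1\le i\le N-1}d(a_i,a_{i+1})\ge \delta\, d(a_1,a_N)$. The negation of the bracketed inequality for all $i$ is exactly the hypothesis $d(z_i,z_{i+1})<\delta\, d(z_1,z_N)$ of a nontrivial $\delta$-chain, so this is a direct contrapositive of the given definition, once one reduces to injective sequences (delete any loop $z_{i+1},\dots,z_j$ whenever $z_i=z_j$; this preserves the endpoints $z_1,z_N$ and keeps every consecutive gap small) and replaces a real $\delta$ by a smaller rational one (the set of admissible $\delta$ is a down-set, since enlarging the requirement fails only for larger $\delta$).

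With this reformulation in hand I would define the parameter $\mathfrak{UD}=(Q,P,F,G,Z,\phi)$ by $Q=\qq\cap(0,1)$, $P=\rr$, $Z=\{1\}$, together with the constant maps $F\colon Q\to\mathcal{F}(\rr)$, $F(q)=[0,\infty)$, and $G\colon Q\to\mathcal{P}^*(\nn)$, $G(q)=\{\,n\in\nn\mid n\ge 2\,\}$, and, for each metrizable space $X$ and each $q\in Q$,
\[
\phi^{q,X}(\{a_i\}_{i=1}^N,1,d)=\max_{1\le i\le N-1}d(a_i,a_{i+1})-q\,d(a_1,a_N).
\]
Condition (TP2) is immediate, because $\phi$ reads off $d$ only through the values $d(a_i,a_{i+1})$ and $d(a_1,a_N)$ on pairs of entries of the sequence $a$, and these are unchanged when $d$ is replaced by $d|_{S^2}$ for any $S$ containing the entries of $a$. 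Condition (TP1) follows from Lemma~\ref{lem:dcon}, since for fixed $a$ the map $d\mapsto\phi^{q,X}(a,1,d)$ is a finite maximum of the continuous functions $d\mapsto d(a_i,a_{i+1})$ minus the continuous function $q\,d(a_1,a_N)$, hence continuous. Thus $\mathfrak{UD}$ is a transmissible parameter, and by the reformulation of the first paragraph the $\mathfrak{UD}$-transmissible property ($\exists q\in Q$ with $\phi^{q,X}(a,1,d)\in[0,\infty)$ for all admissible $a$) is precisely uniform disconnectedness.

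It remains to verify that $\mathfrak{UD}$ is singular. Given $q\in Q$ and $\epsilon\in(0,\infty)$, I would fix an integer $N$ with $N>1+1/q$ and a scale $t\in(0,\epsilon/(N-1)]$, and take $(R,d_R)$ to be the $N$ equally spaced real numbers $a_k=(k-1)t$ for $1\le k\le N$ with the Euclidean metric, enumerated monotonically (as in the doubling proof, the underlying set $R$ is regarded as an element of $\seq(G(q),R)$ via this enumeration). Then $\delta_{d_R}(R)=(N-1)t\le\epsilon$ and $\card(R)=N\in G(q)$; moreover every consecutive gap equals $t$ while $d_R(a_1,a_N)=(N-1)t$, so $\phi^{q,R}(R,1,d_R)=t-q(N-1)t=t\bigl(1-q(N-1)\bigr)<0$ by the choice $q(N-1)>1$, giving $\phi^{q,R}(R,1,d_R)\in\rr\setminus F(q)$. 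The main obstacle is not in this explicit construction but in the reformulation step of the first paragraph: one must check carefully that restricting to \emph{injective} sequences (as forced by $\seq(G(q),X)$) and to \emph{rational} thresholds $\delta$ yields exactly the classical uniform disconnectedness, which is where the loop-removal argument and the monotonicity of the chain condition in $\delta$ are needed.
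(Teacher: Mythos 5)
Your proposal is correct and takes essentially the same route as the paper: identical parameter data $Q=\qq\cap(0,1)$, $G\equiv\{n\ge 2\}$, $Z=\{1\}$, the same appeal to Lemma~\ref{lem:dcon} for (TP1), and the same singularity witness of equally spaced points on a segment; the only (cosmetic) difference is that you encode the chain condition as the scalar $\max_i d(a_i,a_{i+1})-q\,d(a_1,a_N)$ with $F(q)=[0,\infty)$, whereas the paper records the pair $\left(\max_i d(a_i,a_{i+1}),\, d(a_1,a_N)\right)$ and takes $F(q)=\{(x,y)\mid x\ge qy\}$. Your explicit loop-removal and rational-threshold reduction actually justifies the equivalence with classical uniform disconnectedness, a step the paper asserts without proof.
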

\begin{proof}
Define a map 
$F_{UD}\colon  \qq\cap (0, 1)\to \mathcal{F}((\rr_{\ge 0})^{2})$
by 
	\[
	F_{UD}(q)=\{\, (x, y)\in (\rr_{\ge 0})^{2}\mid x\ge qy\, \}, 
	\]
and define a constant map 
$G_{UD}\colon  \qq\cap (0, 1)\to \mathcal{P}^{*}(\nn)$
 by 
 $G_{UD}(q)=[2, \infty)$. 
Put 
$Z_{UD}=\{1\}$. 
 For each metrizable space 
 $X$,  
 and for each 
 $q\in \qq\cap (0, 1)$, 
define a map
$\phi_{UD}^{q, X}\colon  \seq(G_{UD}(q), X)\times Z_{UD}\times \met{X}\to (\rr_{\ge 0})^{2}$
by 
\[
	\phi_{UD}^{q, X}(\{a_{i}\}_{i=1}^{N}, 1, d)
	=
	\left(\max_{1\le i\le N-1}d(a_{i}, a_{i+1}), d(a_{1}, a_{N})\right). 
\]
Let 
$\mathfrak{UD}=(\qq\cap (0, 1), (\rr_{>0})^{2}, F_{UD}, G_{UD},  \{1\}, \phi_{UD})$.  
Then 
$\mathfrak{UD}$ 
satisfies the conditions 
\ref{item:tp2} in Definition \ref{def:transp}. 
By Lemma
 \ref{lem:dcon}, 
we see that
 $\mathfrak{UD}$ 
 satisfies the condition 
 \ref{item:tp1}. 
Hence 
$\mathfrak{UD}$  
is a transmissible parameter,  
and the 
 $\mathfrak{UD}$-transmissible property 
 is equivalent to 
the uniform disconnectedness. 
We next prove that 
$\mathfrak{UD}$
 is singular. 
For every 
$q\in \qq\cap (0,  1)$, 
take 
$n\in \nn$ 
with 
$1/n< q$. 
Put 
\[
	R_q=\{\, \epsilon\cdot i/n\mid i\in \zz\cap [0, n]\, \}, 
\] 
and let 
$d_{q}$
 be the relative metric on 
$R_{q}$ 
induced from the Euclidean metric. 
Then 
$\delta_{d_{q}}(R_{q})=\epsilon$,  
and 
\[
	\phi_{UD}^{q, R_q}(\{a_{i}\}_{i=1}^N, 1, d_q)=
	(\epsilon/n, \epsilon)\not\in U(q).
\] 
This leads to the proposition. 
\end{proof}
\begin{rmk}
Let 
$C$ 
be a countable discrete space, 
and let
 $h\in \met{C}$ 
 be a metric 
 such that 
 $h(x, y)=1$ whenever 
 $x\neq y$. 
Then 
$h$ 
has a neighborhood 
$U$ 
in 
$\met{C}$ 
such that for every 
$d\in U$ 
the space 
$(C, d)$ 
is uniformly disconnected. 
\end{rmk}

From 
Propositions 
\ref{prop:doubling:241018}
and 
\ref{prop:ud:241018} and from 
Theorem
 \ref{thm:trans},  
we deduce the next observations on 
dense 
$G_{\delta}$
subsets of moduli spaces of metrics. 
\begin{thm}\label{thm:dandud:241018}
For every
non-discrete 
 metrizable space 
$X$,  
the following two sets are 
dense $G_{\delta}$ in 
$\met{X}$. 
\begin{enumerate}
\item 
the set of all metrics 
$d\in \met{X}$ 
for which 
$(X, d)$ is not doubling; 
\item
the set of all metrics 
$d\in \met{X}$ 
for which 
$(X, d)$ is not 
uniformly disconnected. 
\end{enumerate}
\end{thm}

\subsection{Rich pseudo-cones}
Let 
$(X, d)$
 be a metric space. 
Let 
$\{A_{i}\}_{i\in \nn}$ 
be a sequence of subsets of 
$X$,  
and let  
$\{u_{i}\}_{i\in \nn}$ 
be a sequence in 
$(0, \infty)$. 
We say that a metric space 
$(P, d_{P})$ 
is a 
\emph{pseudo-cone of $X$} 
approximated by 
$(\{A_{i}\}_{i\in \nn}, \{u_{i}\}_{i\in \nn})$ 
if 
\[
	\lim_{i\to \infty}\mathcal{GH}((A_{i}, u_{i}\cdot d|_{A_{i}^2}), (P, d_{P}))=0
\]
(see \cite{MR4173165}), 
where 
$\mathcal{GH}$ 
is the Gromov--Hausdorff distance (see \cite{MR1835418}). 
For  a metric space 
$(X, d)$,
 we denote by 
 $\pc(X, d)$ 
 the class of all pseudo-cones of 
 $(X, d)$.
Let 
$\mathscr{F}$ 
be the class of 
all finite metric spaces whose  distances 
are rational numbers. 
We denote by 
$\mathscr{G}$ 
the quotient class of 
$\mathscr{F}$ 
divided by the isometric equivalence. 
Note that 
$\mathscr{G}$ is countable.

We say that a metric space 
$(X, d)$ 
\emph{has rich pseudo-cones} 
if 
$\mathscr{F}$
 is contained in 
 $\pc(X, d)$. 
\begin{prop}
The rich pseudo-cones property on metric spaces  is 
an anti-transmissible  property with a singular transmissible  parameter. 
\end{prop}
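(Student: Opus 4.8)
The plan is to realize the rich pseudo-cones property as the anti-$\mathfrak{PC}$-transmissible property of an explicit singular transmissible parameter $\mathfrak{PC}=(Q, P, F, G, Z, \phi)$. For each isometry class in $\mathscr{G}$ I would fix once and for all a finite representative together with an enumeration $p_1,\dots,p_n$ of its points ($n$ being its cardinality) and denote its metric by $d_{\bar q}$; the choice is immaterial for the existential statements below, since reordering the sequence $a$ compensates for relabeling. Put $Q=\mathscr{G}\times(\qq\cap(0,\infty))$, which is at most countable, and write $q=(\bar q,\epsilon)$. Set $G(q)=\{n\}$, $Z=(0,\infty)$, $P=[0,\infty)$, and $F(q)=[\epsilon,\infty)$, which is closed in $P$. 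Finally, for a metrizable space $X$, an injective sequence $a=\{a_i\}_{i=1}^{n}\in\seq(G(q),X)$, a scaling $u\in Z$, and $d\in\met(X)$, define
\[
\phi^{q,X}(a,u,d)=\max_{1\le i,j\le n}\bigl|u\,d(a_i,a_j)-d_{\bar q}(p_i,p_j)\bigr|.
\]
Condition (TP2) is immediate, since this quantity depends on $d$ only through the values $d(a_i,a_j)$ with $a_i,a_j$ in the range of $a$; and (TP1) follows from Lemma \ref{lem:dcon}, as $\phi^{q,X}(a,u)$ is a maximum of finitely many continuous functions of $d$. Thus $\mathfrak{PC}$ is a transmissible parameter.

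Next I would verify that the anti-$\mathfrak{PC}$-transmissible property is exactly the rich pseudo-cones property. Unwinding the definitions, a metric $d$ satisfies the former precisely when, for every fixed representative $\bar q$ (with $n$ points) and every rational $\epsilon>0$, there exist a scale $u>0$ and an injective $n$-tuple $a$ in $X$ with $\max_{i,j}|u\,d(a_i,a_j)-d_{\bar q}(p_i,p_j)|<\epsilon$; that is, some rescaled $n$-point subspace of $(X,d)$ approximates $\bar q$ arbitrarily well in the bijective-distortion sense. The task is to match this with $\mathscr{F}\subset\pc(X,d)$, and the bridge is the standard comparison between the Gromov--Hausdorff distance and the distortion of a correspondence of finite metric spaces (see \cite{BBI}). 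For the easy direction, a bijection of distortion below $\epsilon$ yields $\mathcal{GH}$ at most half the distortion, so letting $\epsilon\to0$ exhibits each $\bar q$ as a pseudo-cone.

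For the converse, suppose $\bar q\in\pc(X,d)$, so $\mathcal{GH}((A_k,u_k\,d|_{A_k^2}),\bar q)\to0$ for some finite subsets $A_k$ and scales $u_k$. Choosing $k$ so that a correspondence $\mathcal{R}_k\subset A_k\times\bar q$ has distortion $D$ below the minimal gap $\gamma=\min_{p\ne p'}d_{\bar q}(p,p')$, I would select for each point $p$ of $\bar q$ a point $a_p\in A_k$ with $(a_p,p)\in\mathcal{R}_k$; the bound $D<\gamma$ forces the $a_p$ to be distinct, so $\{a_p\}$ is an injective $n$-tuple, and the induced bijection has distortion at most $D$. Letting $k\to\infty$ drives $D\to0$ and furnishes the required approximations for every $\epsilon$. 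Since $\pc$ is isometry invariant and $\mathscr{G}$ exhausts $\mathscr{F}$ up to isometry, this gives the desired equivalence. I expect this correspondence-to-injection extraction, carried out uniformly in $\epsilon$, to be the main obstacle, the remainder being bookkeeping.

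Finally, singularity is straightforward. Fix $q=(\bar q,\epsilon)$ and a diameter bound $\epsilon'\in(0,\infty)$. Let $R$ be the underlying point set of $\bar q$ equipped with $d_R=\lambda d_{\bar q}$ for a rational $\lambda\in(0,\infty)$ small enough that $\lambda\,\delta_{d_{\bar q}}(\bar q)\le\epsilon'$, and take $u=1/\lambda\in Z$. Then $\delta_{d_R}(R)\le\epsilon'$, $\card(R)=n\in G(q)$, and $\phi^{q,R}(R,u,d_R)=0<\epsilon$, so $\phi^{q,R}(R,u,d_R)\in P\setminus F(q)$. Hence $\mathfrak{PC}$ is a singular transmissible parameter whose anti-transmissible property is the rich pseudo-cones property, which completes the plan.
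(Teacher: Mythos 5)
Your proposal is correct and follows essentially the same route as the paper: the paper's parameter $\mathfrak{R}=(\nn^2,\rr,R,G_R,(0,\infty),\phi_R)$ indexes pairs (representative $F_n$ of $\mathscr{G}$, threshold $2^{-m}$) and uses $\phi_R(\{a_i\},z,d)=\max_{i,j}|z^{-1}d(a_i,a_j)-d_n(f_{n,i},f_{n,j})|$, which matches your $(\bar q,\epsilon)$-indexed parameter up to replacing $2^{-m}$ by rational $\epsilon$ and $z^{-1}$ by $u$, and the singularity argument via rescaling a representative is identical. The only notable difference is that your converse direction (rich pseudo-cones implies the anti-transmissible property) is carried out more carefully than in the paper, which simply asserts that one may take $\card(A)=\card(F)$; your extraction of an injective $n$-tuple from a correspondence of distortion below the minimal gap $\gamma$ supplies the detail the paper leaves implicit.
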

\begin{proof}
Let 
$\{(L_{n}, d_{n})\}_{n\in \nn}$ 
be a complete representation system of 
$\mathscr{G}$. 
Let 
$L_{n}=\{f_{n, l}\}_{l=1}^{\card(F_{i})}$. 
Define a function 
$F_{R}\colon  \nn^2\to \mathcal{F}(\rr)$ 
by 
\[
	F_{R}(n, m)=\{\, y\in \rr\mid y\ge 2^{-m}\, \}, 
\]
and define a map 
$G_{R}\colon \nn^{2}\to \mathcal{P}^{*}(\nn)$ 
by 
$G_{R}(n, m)=\{\card(L_{n})\}$.

For each  
$k=(n, m)\in \nn^{2}$,  
for each metrizable space 
$X$,  
for each 
$\{a_{i}\}_{i=1}^{M}\in \seq(G_{R}(k), X)$,  
and for all 
$i, j\in \{1, \dots, M\}$ 
we define a function 
$r_{i, j}^{k}(\{a_i\}_{i=1}^{M})
\colon  (0, \infty)\times \met{X}\to \rr$ 
by 
\begin{align*}
	r_{i, j}^{k}(\{a_{i}\}_{i=1}^{M})(z, d)
	=
	|z^{-1}d(a_{i}, a_{j})-d_{n}(f_{n, i}, f_{n, j})| 
\end{align*}
if 
$i, j\in \{1, \dots, M\}$; 
otherwise, 
we define 
$r_{i, j}^{k}(\{a_i\}_{i=1}^{M})(z, d)=0$.
By Lemma
 \ref{lem:dcon}, 
 the map 
 $r_{i, j}^k(\{a_i\}_{i=1}^{M})$
  is continuous.

Define a map 
$\phi_{R}^{k, X}\colon  \seq(G(k), X)\times(0, \infty)\times \met{X}\to \rr$ 
by 
\[
	\phi_R^{k, X}(\{a_{i}\}_{i=1}^{M}, z, d)
	=
	\max_{i, j\in \{1, \dots, M\}}r_{i, j}^{k}(\{a_{i}\}_{i=1}^{M})(z, d). 
\]
Let 
$\mathfrak{R}=(\nn^{2}, \rr, F_{R},  G_{R}, (0, \infty), \phi_r)$. 
Then 
$\mathfrak{R}$ 
satisfies the conditions 
\ref{item:tp1} and 
\ref{item:tp2}
in Definition 
\ref{def:transp}, 
and hence 
it is a transmissible parameter.

For a metric space 
$(X, d)$, 
the 
anti-$\mathfrak{R}$-transmissible property means that 
for every 
$n\in \nn$, 
and for every 
$m\in \nn$, 
there exist 
a finite subspace 
$A=\{a_{i}\}_{i=1}^{\card(L_{n})}$ 
of 
$X$ 
and a positive number 
$z\in (0, \infty)$ 
such that for all 
$i, j\in \{1, \dots, \card(L_{n})\}$ 
we have 
\[
	|z^{-1}d(a_{i}, a_{j})-d_{n}(f_{n,i}, f_{n, j})|<2^{-m};
\]
in particular, 
$\mathcal{GH}((A, z^{-1}d|_{A^{2}}), (L_{n}, d_{n}))<2^{-(m+1)}$. 
Thus 
$\mathscr{F}$ 
is contained in 
$\pc(X, d)$. 
This implies that 
$(X, d)$ 
has rich pseudo-cones. 
We next prove the opposite. 
If 
$(X, d)$ 
has rich pseudo-cones,  
then for every 
$(W, d_{W})\in \mathscr{F}$, 
and for every 
$\epsilon\in (0, \infty)$, 
there exist a positive number 
$z\in (0, \infty)$ 
and  a subset 
$A$ 
of
 $X$ 
 with 
$\card(A)=\card(W)$ such that 
$\mathcal{GH}((A, z^{-1}d|_{A^{2}}), (W, d_{W}))<\epsilon$. 
Thus 
$(X, d)$ 
satisfies the
 anti-$\mathfrak{R}$-transmissible property. 
We next prove that 
$\mathfrak{R}$ 
is singular. 
For each 
$(n, m)\in \nn^{2}$ 
and for each 
$\epsilon\in (0, \infty)$, 
we put 
\[
	(R, d_{R})=\left(L_{n}, \frac{\epsilon}{\delta_{d_{n}}(L_{n})}\cdot d_{n}\right).
\]
Then we have 
$\delta_{d_{R}}(R)=\epsilon$,  
and 
\[
	\phi_R^{(n, m), R}
	\left(
	\{f_{n, l}\}_{l=1}^{\card(L_{n})}, \delta_{d_{n}}(L_{n})/\epsilon, d_{R} 
	\right)
	=
	0
	\not\in F_{R}(n, m). 
\] 
Therefore 
$\mathfrak{R}$
 is singular. 
This completes the proof. 
\end{proof}

Since every compact metric space is arbitrarily approximated by 
members of 
$\mathscr{F}$
 in the sense of Gromov--Hausdorff, 
we obtain: 
\begin{prop}
A metric space 
$(X, d)$ 
has rich pseudo-cones 
if and only if 
$\pc(X, d)$ 
contains all compact metric spaces.  
\end{prop}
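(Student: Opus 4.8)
The plan is to prove the two implications separately, with essentially all of the content in the forward direction, carried out by a diagonal approximation resting on the triangle inequality for the Gromov--Hausdorff distance.

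The reverse implication is immediate. Every member of $\mathscr{F}$ is a finite, hence compact, metric space; so if $\pc(X, d)$ contains all compact metric spaces, then in particular $\mathscr{F}\subset \pc(X, d)$, which is exactly the assertion that $(X, d)$ has rich pseudo-cones.

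For the forward implication, I would assume $(X, d)$ has rich pseudo-cones, that is $\mathscr{F}\subset \pc(X, d)$, and fix an arbitrary compact metric space $(K, d_K)$; the goal is to exhibit a single sequence of rescaled subspaces of $X$ converging to $(K, d_K)$ in $\mathcal{GH}$. The construction proceeds in two layers. First, by the density of $\mathscr{F}$ among compact metric spaces in the Gromov--Hausdorff sense (the fact recalled just before the statement), for each $n\in\nn$ I would choose $(P_n, e_n)\in \mathscr{F}$ with $\mathcal{GH}((P_n, e_n), (K, d_K))<1/(2n)$. Second, since $\mathscr{F}\subset \pc(X, d)$, each $(P_n, e_n)$ is a pseudo-cone of $(X, d)$, so unwinding the definition produces a subset $A_n\subset X$ together with a scale $u_n\in(0, \infty)$ satisfying $\mathcal{GH}((A_n, u_n\cdot d|_{A_n^2}), (P_n, e_n))<1/(2n)$; here one simply extracts a single term of the defining approximating sequence below the prescribed tolerance $1/(2n)$, which is legitimate because the sequence tends to $0$.

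The final step combines the two estimates via the triangle inequality for $\mathcal{GH}$ on isometry classes of compact metric spaces (see \cite{BBI}): for every $n$,
\[
\mathcal{GH}((A_n, u_n\cdot d|_{A_n^2}), (K, d_K))<\frac{1}{2n}+\frac{1}{2n}=\frac{1}{n},
\]
so $\mathcal{GH}((A_n, u_n\cdot d|_{A_n^2}), (K, d_K))\to 0$, and the pair $(\{A_n\}_{n\in\nn}, \{u_n\}_{n\in\nn})$ witnesses $(K, d_K)\in \pc(X, d)$. As $(K, d_K)$ was arbitrary, $\pc(X, d)$ contains all compact metric spaces. The only delicate point is organizational rather than substantive: the second layer a priori supplies, for each $n$, a whole sequence of subspaces approximating $P_n$, and the argument must collapse this doubly-indexed family into one $\nn$-indexed sequence; the diagonal choice above accomplishes exactly this, so no genuine obstacle remains.
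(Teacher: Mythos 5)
Your proof is correct and is essentially the paper's argument: the paper proves this proposition simply by invoking the Gromov--Hausdorff density of $\mathscr{F}$ in the class of compact metric spaces, leaving implicit exactly the diagonal extraction and triangle-inequality step that you spell out. The reverse direction is likewise the same trivial observation that members of $\mathscr{F}$ are compact.
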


From Theorem
 \ref{thm:trans},  
we deduce the following:
\begin{thm}\label{thm:richcones}
For every
non-discrete 
 metrizable space 
$X$,  
the set of all metrics 
$d\in \met{X}$ 
for which 
$(X, d)$ 
has rich pseudo-cones is dense 
$G_{\delta}$ 
in 
$\met{X}$. 
\end{thm}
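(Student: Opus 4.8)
The plan is to deduce Theorem \ref{thm:richcones} directly from Theorem \ref{thm:trans} by exhibiting a singular transmissible parameter whose associated anti-transmissible property coincides with the rich pseudo-cones property. Fortunately, the immediately preceding proposition already establishes exactly this: it constructs a transmissible parameter $\mathfrak{R}=(\nn^2, \rr, R, G_R, (0,\infty), \phi_R)$, verifies that $\mathfrak{R}$ is singular, and proves that a metric space $(X,d)$ has rich pseudo-cones if and only if it satisfies the anti-$\mathfrak{R}$-transmissible property. So the heavy lifting is already done, and this final theorem is essentially a corollary.

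The one subtlety is that Theorem \ref{thm:trans} is stated only for \emph{non-discrete} metrizable spaces, whereas Theorem \ref{thm:richcones} asserts the conclusion for \emph{every} metrizable space $X$. Thus the first thing I would do is dispose of the discrete case. If $X$ is a discrete metrizable space, then $X$ carries no non-trivial convergent sequences, so no metric $d\in\met(X)$ can approximate, say, a two-point space with prescribed small diameter as a rescaled limit; more to the point, a discrete space admits no $(\omega_0+1)$-subspace, which is the feature driving the density argument in Proposition \ref{prop:dense}. One must check what "rich pseudo-cones" means here: a pseudo-cone is a Gromov--Hausdorff limit of rescalings $u_i\cdot d|_{A_i^2}$ of subsets $A_i$, and on a discrete space one can still take finite subsets and rescale them, so potentially a discrete space \emph{can} have rich pseudo-cones if it is infinite. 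I would examine whether every infinite metrizable space (discrete or not) already admits the needed approximating finite configurations, or whether the statement should be read with the convention that the interesting content is the non-discrete case and the discrete case is either vacuous or handled separately.

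Assuming $X$ is non-discrete (the substantive case), the proof is then a single invocation: since $\mathfrak{R}$ is a singular transmissible parameter by the preceding proposition, Theorem \ref{thm:trans} applies verbatim and yields that the set of all $d\in\met(X)$ for which $(X,d)$ satisfies the anti-$\mathfrak{R}$-transmissible property is dense $G_\delta$ in $\met(X)$. By the equivalence established in that proposition, this set is precisely the set of metrics $d$ for which $(X,d)$ has rich pseudo-cones, so the theorem follows. The proof is therefore structured as: (i) recall from the preceding proposition that $\mathfrak{R}$ is singular and that anti-$\mathfrak{R}$ equals rich pseudo-cones; (ii) apply Theorem \ref{thm:trans}.

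The main obstacle I anticipate is not in the non-discrete case—there the result is immediate—but in correctly handling the scope of "every metrizable space $X$." I expect the cleanest resolution is to observe that if $X$ is discrete then $\met(X)$ and the rich-pseudo-cones condition must be analyzed directly: an infinite discrete space admits an $(\omega_0+1)$-metric subspace only after choosing a metric with a convergent sequence, which is impossible for a metric generating the discrete topology, so in fact one needs the topological non-discreteness hypothesis to produce the auxiliary $(\omega_0+1)$-subspace $R$ used throughout Section \ref{sec:trans}. I would therefore either restrict attention to non-discrete $X$ (noting that a non-empty discrete $X$ is an uninteresting or degenerate case) or verify that finite and discrete spaces are handled trivially, and then apply Theorem \ref{thm:trans} to conclude.
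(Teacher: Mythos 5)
Your proposal is correct and coincides with the paper's own proof, which likewise consists of the single observation that the preceding proposition exhibits the rich pseudo-cones property as the anti-$\mathfrak{R}$-transmissible property for a singular transmissible parameter $\mathfrak{R}$, followed by an invocation of Theorem \ref{thm:trans}.

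On the fork you leave open concerning discrete spaces: only your first option (restricting to non-discrete $X$) is viable; the paper's phrase ``every metrizable space'' is an overstatement, and the non-discreteness hypothesis of Theorem \ref{thm:trans} is genuinely needed. If $X$ is finite with $n$ points, then every pseudo-cone of $(X,d)$ has at most $n$ points (a correspondence realizing a small Gromov--Hausdorff distance would force two of any $n+1$ points to be arbitrarily close), so no metric on $X$ has rich pseudo-cones, yet $\met(X)\neq\emptyset$; the set in question is empty, hence not dense. If $X$ is infinite discrete, let $d\in\met(X)$ give distance $1$ to every pair of distinct points; for any $m$ with $\mathcal{D}_X(m,d)\le\epsilon<1/3$, all distances of $(X,m)$ between distinct points lie in $[1-\epsilon,1+\epsilon]$, so in every rescaled finite subspace $u\cdot m|_{A^2}$ the ratio of the largest to the smallest positive distance is at most $(1+\epsilon)/(1-\epsilon)<2$, and therefore the three-point space with distances $1,1,2$ (which belongs to $\mathscr{F}$) is not a pseudo-cone of $(X,m)$. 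Thus no metric in that ball has rich pseudo-cones and density fails. So the theorem must be read for non-discrete $X$, exactly as your application of Theorem \ref{thm:trans} requires; this is a defect in the paper's statement, not a gap in your argument.
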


\begin{rmk}
Chen and Rossi 
\cite{MR3395963}
 introduced the notion of locally rich compact metric spaces. 
 They investigated the distribution of locally rich metric spaces in a space of  compact metric spaces with respect to the Gromov--Hausdorff distance, 
 and they also studied this subject in the Euclidean setting in a space of compact subspaces. 
\end{rmk}
\subsection{Metric inequality}

Let 
$f\colon  \rr^{\binom{n}{2}}\to \rr$ 
be a continuous function. 
We say that a metric space 
$(X, d)$ 
satisfies the 
\emph{$(n, f)$-metric inequality}
 if 
for all $n$ points 
$a_{1}, \dots, a_{n}$ 
in 
$X$ 
we have 
$f(\{d(a_{i}, a_{j})\}_{i\neq j})\ge 0$. 
We say that a function  
$f\colon \rr^{\binom{n}{2}}\to \rr$ 
is \emph{positively sub-homogeneous} 
if there exists 
$s\in (0, \infty)$ 
such that
for every 
$x\in \rr^{\binom{n}{2}}$ 
and for every 
$c\in (0, \infty)$
we have 
$f(r\cdot x)\le r^{c}f(x)$.

\begin{prop}\label{prop:inq}
For 
$n\in \nn$, 
let  
$f\colon  \rr^{\binom{n}{2}}\to \rr$ 
be a continuous function. 
Then satisfying the 
$(n, f)$-metric inequality 
on metric spaces is a transmissible  property. 
Moreover, 
if 
$f$
 is positively sub-homogeneous,  
and if  there exists a metric space 
that does not satisfy 
the 
$(n, f)$-metric inequality, 
then satisfying the 
$(n, f)$-metric inequality on metric spaces  
is 
a  transmissible  property 
with a singular transmissible parameter. 
\end{prop}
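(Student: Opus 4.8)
The plan is to encode the $(n,f)$-metric inequality as a transmissible property whose parameter has a singleton index set, and then to verify singularity by rescaling a fixed violating configuration. Set $Q=\{*\}$, $P=\rr$, $F_I(*)=[0,\infty)$ (closed in $\rr$), and $Z=\{1\}$. Because the inequality must be tested on $n$ points that need not be distinct, while $\seq$ supplies only injective sequences, I let $G_I(*)=\{1,2,\dots,n\}$ and, for an injective sequence $a=\{a_i\}_{i=1}^{m}$ of length $m\le n$, I record every way of collapsing an $n$-tuple onto its distinct entries $a_1,\dots,a_m$. Writing $\Sigma_{n,m}$ for the finite set of surjections $\sigma\colon\{1,\dots,n\}\to\{1,\dots,m\}$, define
\[
\phi_I^{*,X}(\{a_i\}_{i=1}^{m},1,d)=\min_{\sigma\in\Sigma_{n,m}} f\bigl(\{d(a_{\sigma(i)},a_{\sigma(j)})\}_{i\neq j}\bigr).
\]
Each $d\mapsto d(a_{\sigma(i)},a_{\sigma(j)})$ is continuous by Lemma \ref{lem:dcon}, so composing with the continuous $f$ and taking a minimum of finitely many continuous functions yields (TP1); since $\phi_I$ depends only on the mutual distances of the entries of $a$, which are unchanged under restriction of $d$, (TP2) holds. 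Thus $\mathfrak{I}=(\{*\},\rr,F_I,G_I,\{1\},\phi_I)$ is a transmissible parameter.

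Next I would check that the $\mathfrak{I}$-transmissible property is exactly the $(n,f)$-metric inequality. As $Q$ is a singleton, the $\mathfrak{I}$-transmissible property asserts that $\phi_I^{*,X}(a,1,d)\ge 0$ for every injective $a$ of length at most $n$; unravelling the minimum, this reads $f(\{d(a_{\sigma(i)},a_{\sigma(j)})\}_{i\neq j})\ge 0$ for every such $a$ and every $\sigma\in\Sigma_{n,m}$. The key point is that $(a,\sigma)\mapsto(a_{\sigma(1)},\dots,a_{\sigma(n)})$ ranges over precisely all $n$-tuples in $X^n$: any $n$-tuple reduces, by enumerating its distinct values as an injective sequence $a$ and recording the coincidence pattern as a surjection $\sigma$, to exactly one such pair, and conversely. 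Hence the two conditions coincide for every metric space $X$, including those with fewer than $n$ points, proving the first assertion.

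For the second assertion I would show $\mathfrak{I}$ is singular. Fix a metric space $(Y,\rho)$ violating the inequality and points $c_1,\dots,c_n\in Y$ with $f(\{\rho(c_i,c_j)\}_{i\neq j})<0$; let $b_1,\dots,b_m$ be the distinct values among the $c_i$ and $\sigma_0\in\Sigma_{n,m}$ the coincidence pattern, so the distance vector $v=\{\rho(b_{\sigma_0(i)},b_{\sigma_0(j)})\}_{i\neq j}$ satisfies $f(v)<0$. Given $\epsilon\in(0,\infty)$, pick $r\in(0,1]$ with $r\,\delta_{\rho}(\{b_1,\dots,b_m\})\le\epsilon$, and let $(R,d_R)$ be $\{b_1,\dots,b_m\}$ with $d_R=r\cdot(\rho|_{R^2})$. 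Then $\card(R)=m\in G_I(*)$ and $\delta_{d_R}(R)\le\epsilon$, and positive sub-homogeneity furnishes $s\in(0,\infty)$ with $f(rv)\le r^{s}f(v)<0$, whence
\[
\phi_I^{*,R}(R,1,d_R)\le f\bigl(\{d_R(b_{\sigma_0(i)},b_{\sigma_0(j)})\}_{i\neq j}\bigr)=f(rv)<0.
\]
Thus $\phi_I^{*,R}(R,1,d_R)\in\rr\setminus F_I(*)$, verifying the three conditions of singularity.

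I expect the main obstacle to be the bookkeeping for non-injective $n$-tuples. Since $\seq$ furnishes only injective sequences, the naive choice $G_I(*)=\{n\}$ would leave violations supported on fewer than $n$ distinct points undetected (for instance on one-point spaces), breaking the equivalence with the genuine $(n,f)$-inequality; the surjection-minimum device repairs this, and the delicate step is confirming that it reproduces exactly $X^n$ in every space, small ones included. By contrast the role of positive sub-homogeneity is routine: it serves only to preserve the strict negativity of $f$ under the rescaling needed to force the diameter below $\epsilon$.
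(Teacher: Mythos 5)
Your proposal is correct, and its skeleton coincides with the paper's: a singleton index set $Q$, target space $\rr$ with closed set $F=[0,\infty)$, trivial $Z=\{1\}$, a map $\phi$ given by evaluating $f$ on the vector of mutual distances (continuity, i.e.\ (TP1), via Lemma \ref{lem:dcon}, and (TP2) because only distances between the listed points enter), and singularity obtained by rescaling a fixed violating configuration, with positive sub-homogeneity preserving the strict negativity of $f$ under the rescaling. The genuine difference is your surjection-minimum device. The paper takes $G(1)=\{n\}$ and evaluates $f$ only on injective $n$-sequences, so the property it encodes is literally the inequality tested on $n$ \emph{distinct} points; correspondingly, its singularity argument asserts, without justification, that a failure of the inequality can be witnessed by a metric space with exactly $n$ points. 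Both steps are harmless for the paper's concrete examples (ultratriangle, Ptolemy, $\cycl_m(0)$), where configurations with repeated entries cannot produce violations, but for an arbitrary continuous $f$ they leave gaps: a space with fewer than $n$ points satisfies the paper's encoded property vacuously yet may violate the $(n,f)$-inequality as defined over arbitrary, possibly repeating, $n$-tuples (e.g.\ any $f$ with $f(0,\dots,0)<0$, detected on a one-point space); and if every violating $n$-tuple has repeated entries, the paper exhibits no $n$-point violating subspace (this is repairable by perturbing the pseudometric $\rho(c_i,c_j)+\delta[i\neq j]$ and invoking continuity of $f$, but the paper does not do so). Your choice $G_I(*)=\{1,\dots,n\}$ together with the minimum over surjections $\Sigma_{n,m}$ makes the encoded property agree exactly with the $(n,f)$-inequality on every space, small ones included, and makes singularity follow from any violating tuple whatsoever. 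In short, your route is the same in spirit but strictly more careful: the paper's version buys brevity, while yours buys that Proposition \ref{prop:inq} holds verbatim for every continuous $f$.
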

\begin{proof}
Let
 $Q=\{1\}$ 
and define a map  
$F\colon Q\to \mathcal{F}(\rr)$ 
by 
$F(1)=[0, \infty)$. 
Define a map 
$G\colon  Q\to \mathcal{P}^*(\nn)$ 
by 
$G(1)=\{n\}$. 
For each  metrizable space
 $X$,  
we define a map
$\phi^{1, X}\colon  
\seq(\{n\}, X)\times \{1\}\times \met{X}\to \rr$ 
by 
\[
	\phi^{1, X}(\{a_{i}\}_{i=1}^{n}, 1, d)=f(\{d(a_{i}, a_{j})\}_{i\neq j}). 
\]
Let 
$\mathfrak{G}=(\{1\}, \rr,  F, G, \{1\}, \phi)$. 
Then 
$\mathfrak{G}$ 
is a transmissible parameter.

We next show the latter part. 
Since there exists a metric space
 that does not satisfy 
  the 
$(n, f)$-metric inequality, 
there exists a metric space 
$(S, d_S)$ 
with 
$\card(S)=n$ 
that does not satisfy the 
$(n, f)$-metric inequality. 
Let 
$c\in (0, \infty)$ 
be a positive number such that for every 
$x\in \rr^{\binom{n}{2}}$, 
and for every 
$r\in (0, \infty)$ 
we have 
$f(r\cdot x)\le r^{c}f(x)$. 
Let 
$S=\{s_{i}\}_{i=1}^n$ 
and assume that 
$f(\{d_{S}(s_{i}, s_{j})\}_{i\neq j})<0$. 
For every 
$\epsilon\in (0, \infty)$,  
put $(R, d_{R})=(S, \epsilon\cdot d_{S})$. 
Thus we have 
$\delta_{d_{R}}(R)=\epsilon$,  
and 
\[
	\phi^{1, R}(\{s_{i}\}_{i=1}^{n}, 1, d_R)=f(\{\epsilon\cdot d_{S}(s_{i}, s_{j})\})
	=\epsilon^cf(\{d_{S}(s_{i}, s_{j})\})<0. 
\]
This implies that 
$\mathfrak{G}$ 
is singular. 
This finishes the proof. 
\end{proof}
Combining Theorem \ref{thm:trans},  
Corollary \ref{cor:open} 
and Proposition \ref{prop:inq}, 
we obtain the following corollary:
\begin{cor}\label{cor:inqopen}
Let 
$X$ 
be a non-discrete metrizable space. 
For 
$n\in \nn$, 
let 
$f\colon  \rr^{\binom{n}{2}}\to \rr$ 
be a continuous function. 
If
 $f$ 
 is positively sub-homogeneous,  
and if  there exists a metric space not satisfying the 
$(n, f)$-metric inequality, 
then the set of all metrics 
$d$
 in 
$\met{X}$ 
for which the space 
$(X, d)$
does not satisfy the 
$(n, f)$-metric inequality is 
dense open in 
$\met{X}$. 
\end{cor}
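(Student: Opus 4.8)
The plan is to combine the three earlier results that the corollary explicitly cites. The claim is that the set $U$ of all $d \in \met(X)$ for which $(X,d)$ \emph{fails} the $(n,f)$-metric inequality is dense open in $\met(X)$, under the hypotheses that $X$ is non-discrete, $f$ is positively sub-homogeneous, and some metric space fails the $(n,f)$-metric inequality. First I would invoke Proposition \ref{prop:inq}: its latter part says that, under exactly these hypotheses, satisfying the $(n,f)$-metric inequality is a transmissible property realized by a \emph{singular} transmissible parameter $\mathfrak{G}$. Failing the inequality is then precisely the anti-$\mathfrak{G}$-transmissible property, so $U = S(X, \mathfrak{G})$ in the notation of the preceding subsection.

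Next I would feed this $\mathfrak{G}$ into Theorem \ref{thm:trans}. Since $X$ is non-discrete and $\mathfrak{G}$ is singular, that theorem immediately gives that $U = S(X,\mathfrak{G})$ is dense $G_\delta$ in $\met(X)$, which already delivers density. To upgrade the $G_\delta$ conclusion to \emph{openness}, I would appeal to the ``moreover'' clause of Corollary \ref{cor:open}: when the index set $Q$ of the transmissible parameter is finite, $S(X,\mathfrak{G})$ is in fact open. The parameter constructed in the proof of Proposition \ref{prop:inq} uses $Q = \{1\}$, a singleton, so this finiteness hypothesis is satisfied. Combining openness with the density from Theorem \ref{thm:trans} yields that $U$ is dense open, as required.

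The only point demanding any care is verifying that the transmissible parameter produced by Proposition \ref{prop:inq} is the same one whose finiteness I want to exploit in Corollary \ref{cor:open}; I would make this explicit by naming $\mathfrak{G} = (\{1\}, \rr, F, G, \{1\}, \phi)$ as in that proof and noting $\card(Q) = 1 < \infty$. Everything else is a direct citation, so there is no genuine obstacle here: the corollary is essentially a bookkeeping consequence of assembling Proposition \ref{prop:inq} (to get a singular parameter with singleton $Q$), Theorem \ref{thm:trans} (to get dense $G_\delta$), and Corollary \ref{cor:open} (to promote $G_\delta$ to open when $Q$ is finite). I would write the argument in three short sentences tracking these three inputs and conclude that $U$ is dense open in $\met(X)$.
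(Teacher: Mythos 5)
Your proof is correct and is essentially the paper's own argument: the paper derives the corollary by exactly the same combination of Proposition \ref{prop:inq} (singular parameter with $Q=\{1\}$), Theorem \ref{thm:trans} (dense $G_\delta$), and the finite-$Q$ clause of Corollary \ref{cor:open} (openness). Your explicit check that the parameter from Proposition \ref{prop:inq} has singleton $Q$ is the only detail the paper leaves implicit, and it is the right one to verify.
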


We  say that 
a metric space 
$(X, d)$ 
satisfies the 
\emph{ultratriangle inequality} 
if 
for all three points 
$a_{1}, a_{2}, a_{3}$ 
in 
$X$ 
we have 
\[
	d(a_{1}, a_{3})\le \max\{d(a_{1}, a_{2}), d(a_{2}, a_{3})\}. 
\]

\begin{prop}
Define a function 
$f\colon  \rr^{\binom{3}{2}}\to \rr$ 
by 
\[
	f(x)=\max\{x_{1, 2}, x_{2, 3}\}-x_{1, 3}. 
\]
Then 
the ultrametric  inequality on metric spaces is equivalent to the 
$(3, f)$-metric inequality,  
and 
$f$
 is positively sub-homogeneous. 
\end{prop}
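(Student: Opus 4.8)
The plan is to verify the two assertions separately, both of which reduce to unwinding the relevant definitions; I do not expect a genuine obstacle, only bookkeeping.

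For the equivalence, I would first fix three points $a_1, a_2, a_3 \in X$ and identify the coordinates of $\rr^{\binom{3}{2}}$ with the pairwise distances via $x_{i, j} = d(a_i, a_j)$, using the symmetry $d(a_i, a_j) = d(a_j, a_i)$ so that the tuple $\{d(a_i, a_j)\}_{i \neq j}$ is unambiguous. Substituting into the given $f$ yields
\[
f(\{d(a_i, a_j)\}_{i \neq j}) = \max\{d(a_1, a_2), d(a_2, a_3)\} - d(a_1, a_3),
\]
so the inequality $f(\{d(a_i, a_j)\}_{i \neq j}) \ge 0$ is literally the inequality $d(a_1, a_3) \le \max\{d(a_1, a_2), d(a_2, a_3)\}$. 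Since the $(3, f)$-metric inequality and the ultratriangle inequality quantify this same statement over exactly the same family of triples $a_1, a_2, a_3$, the two properties coincide term by term, which gives the claimed equivalence.

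For the sub-homogeneity, I would observe that $f$ is positively homogeneous of degree $1$: for every $r \in (0, \infty)$ and every $x \in \rr^{\binom{3}{2}}$, the positive homogeneity of $\max$ gives $\max\{r x_{1, 2}, r x_{2, 3}\} = r \max\{x_{1, 2}, x_{2, 3}\}$, whence
\[
f(r \cdot x) = r \max\{x_{1, 2}, x_{2, 3}\} - r x_{1, 3} = r f(x).
\]
Taking the exponent in the definition of positive sub-homogeneity to be $1$, this reads $f(r \cdot x) = r^1 f(x) \le r^1 f(x)$, so the required inequality holds, in fact with equality, for every $x$ and every $r$, regardless of the sign of $f(x)$. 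Hence $f$ is positively sub-homogeneous.

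The only points requiring care are organizational: matching the index set $\binom{3}{2}$ of $f$ to the three pairwise distances, and confirming that the universal quantification over triples is identical in the two definitions, so that no relabelling of the ``apex'' point $a_2$ is needed. Because the exponent is chosen to be $1$, the sub-homogeneity inequality degenerates to an equality and no sign analysis is necessary; thus I anticipate no real difficulty, and the main ``hard part'' is merely stating the index-matching cleanly.
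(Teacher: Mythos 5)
Your proposal is correct: the equivalence is exactly the definitional unwinding (both properties quantify $f(\{d(a_i,a_j)\}_{i\neq j})\ge 0$, i.e.\ $d(a_1,a_3)\le\max\{d(a_1,a_2),d(a_2,a_3)\}$, over the same triples), and degree-$1$ homogeneity of $f$ gives $f(r\cdot x)=r^1f(x)$, which yields positive sub-homogeneity with exponent $1$ irrespective of the sign of $f(x)$. The paper states this proposition without proof, treating precisely this routine verification as immediate, so your argument matches the intended one.
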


We  say that a metric space  
$(X, d)$ 
satisfies the 
\emph{Ptolemy inequality} if 
for all four points 
$a_{1}, a_{2}, a_{3}, a_{4}$ 
in 
$X$ 
we have 
\[
	d(a_{1}, a_{3})d(a_{2}, a_{4})\le d(a_{1}, a_{2})d(a_{3}, a_{4})+d(a_{1}, a_{4})d(a_{2}, a_{3}). 
\]
\begin{prop}
Define a function  
$f\colon  \rr^{\binom{4}{2}}\to \rr$ 
by 
\[
	f(x)=x_{2, 3}x_{1, 4}+x_{1,2}x_{3, 4}-x_{1, 3}x_{2, 4}. 
\]
Then 
the Ptolemy inequality on metric spaces  is equivalent to the  
$(4, f)$-metric inequality, 
 and 
$f$ 
is positively sub-homogeneous. 
\end{prop}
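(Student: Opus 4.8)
The plan is to verify two separate assertions: that the Ptolemy inequality coincides with the $(4,f)$-metric inequality for the stated $f$, and that this $f$ is positively sub-homogeneous. Both are elementary once the bookkeeping of indices is fixed, so I would keep the proof short and direct.

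First I would unwind the definition of the $(4,f)$-metric inequality. By definition, $(X,d)$ satisfies it precisely when for all four points $a_1,a_2,a_3,a_4$ in $X$ we have $f(\{d(a_i,a_j)\}_{i\neq j})\ge 0$. Substituting $x_{i,j}=d(a_i,a_j)$ into the given $f$, the inequality $f(x)\ge 0$ reads
\[
	d(a_2,a_3)d(a_1,a_4)+d(a_1,a_2)d(a_3,a_4)-d(a_1,a_3)d(a_2,a_4)\ge 0,
\]
which is exactly the Ptolemy inequality as stated (after transposing the subtracted term). So the equivalence is immediate from matching the coordinate $x_{i,j}$ with the distance $d(a_i,a_j)$; the only thing to check is that every pair of indices appearing in $f$ corresponds to a genuine pairwise distance, which it does since $f$ uses all six pairs from $\{1,2,3,4\}$.

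For the sub-homogeneity, I would exhibit the witnessing exponent. Observe that $f$ is a homogeneous polynomial of degree $2$ in its arguments: each of the three terms $x_{2,3}x_{1,4}$, $x_{1,2}x_{3,4}$, and $x_{1,3}x_{2,4}$ is a product of exactly two coordinates. Hence for every $r\in(0,\infty)$ and every $x$ we have $f(r\cdot x)=r^2 f(x)$. Taking $s=2$ in the definition of positively sub-homogeneous, the required bound $f(r\cdot x)\le r^c f(x)$ holds with $c=2$ (indeed with equality), so $f$ is positively sub-homogeneous.

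I do not expect a genuine obstacle here; the statement is a direct verification. The only point demanding a little care is the index convention in $\rr^{\binom{4}{2}}$: one must confirm that the three monomials of $f$ pair up the four points into the three perfect matchings of $\{1,2,3,4\}$, matching the classical form of Ptolemy's inequality where the product of the diagonals is bounded by the sum of the products of opposite sides. Once that correspondence is pinned down, both claims follow immediately.
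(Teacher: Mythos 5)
Your proof is correct. The paper states this proposition without any proof, treating both claims as immediate verifications, and your direct check --- matching the three monomials of $f$ with the three terms of the Ptolemy inequality under the substitution $x_{i,j}=d(a_i,a_j)$, and observing that $f$ is a homogeneous polynomial of degree $2$, so that $f(r\cdot x)=r^2f(x)$ gives the sub-homogeneity bound with exponent $c=2$ (with equality) --- is exactly the intended argument.
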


Gromov
 \cite{MR1871000} 
introduced the cycle condition for metric spaces as follows:  
Let 
$m\in \nn$ 
and 
$\kappa\in \rr$. 
Let 
$(M(\kappa), d_{M(\kappa)})$ 
be the two-dimensional space form of 
constant  curvature 
$\kappa$. 
We say that a metric space 
$(X, d)$ 
satisfies the 
\emph{$\cycl_m(\kappa)$ condition}
if for every map 
$f\colon \zz/m\zz\to X$ 
there exists a map 
$g\colon  \zz/m\zz\to M(\kappa)$ 
such that 
\begin{enumerate}[label=\text{(\arabic*)}]
	\item\label{item:gcyc11018}
	for all 
	$i\in \zz/m\zz$, 
	we have 
\[
	d_{M(\kappa)}(g(i), g(i+1))\le d(f(i), f(i+1));
\] 
	\item\label{item:gcyc21018}
	 for all 
	$i, j\in \zz/m\zz$ 
	with 
	$i-j\neq \pm 1$, 
	we have 
\[
	d_{M(\kappa)}(g(i), g(j))\ge d(f(i), f(j)), 
\]
where the symbol 
$``+''$ 
stands for the addition of 
$\zz/m\zz$. 
\end{enumerate}
\begin{prop}
For every  
$m\in \nn$, 
the 
$\cycl_{m}(0)$  
condition can be represented by an 
$(m, C)$-metric inequality 
for some positively sub-homogeneous function
 $C$.  
\end{prop}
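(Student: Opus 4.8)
The plan is to realize the $\cycl_m(0)$ condition as the non-negativity of one explicit continuous function of the $\binom{m}{2}$ pairwise distances. Identify the flat model space $M(0)$ with the Euclidean plane $\rr^2$ and write $|\cdot|$ for its norm. For $x=\{x_{\{i,j\}}\}\in\rr^{\binom{m}{2}}$, indexed by the pairs of $\zz/m\zz$, and for a configuration $g\colon \zz/m\zz\to\rr^2$, set
\[
\Phi(x,g)=\min\Bigl\{\min_{i\in\zz/m\zz}\bigl(x_{\{i,i+1\}}-|g(i)-g(i+1)|\bigr),\ \min_{i-j\neq\pm1}\bigl(|g(i)-g(j)|-x_{\{i,j\}}\bigr)\Bigr\},
\]
with the convention that a minimum over the empty index set is $+\infty$ (the second term is vacuous exactly when $m=3$). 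I then define $C(x)=\sup_{g}\Phi(x,g)$, the supremum ranging over all $g\colon\zz/m\zz\to\rr^2$. By construction, for a fixed $m$-tuple $a_1,\dots,a_m$ with $x_{\{i,j\}}=d(a_i,a_j)$, the inequality $\Phi(x,g)\ge 0$ says precisely that $g$ realizes the two defining inequalities of $\cycl_m(0)$ for the map $f(i)=a_i$. Thus everything reduces to verifying that $C$ is real-valued, continuous, positively sub-homogeneous, and that $C(x)\ge 0$ is \emph{equivalent} to the existence of a witnessing $g$.

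First I would check that $C$ takes values in $\rr$: since $\Phi(x,g)\le x_{\{i,i+1\}}-|g(i)-g(i+1)|\le \min_i x_{\{i,i+1\}}$ for every $g$, the supremum is bounded above, and evaluating at any single $g$ shows it is bounded below. For continuity I would observe that for each fixed $g$ the map $x\mapsto\Phi(x,g)$ is $1$-Lipschitz for the supremum norm on $\rr^{\binom{m}{2}}$ (each term depends affinely on a single coordinate of $x$, and a minimum of $1$-Lipschitz functions is $1$-Lipschitz), and that this Lipschitz constant is uniform in $g$; hence $C$, being the supremum of a uniformly $1$-Lipschitz family, is again $1$-Lipschitz and in particular continuous. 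That $C$ is positively sub-homogeneous is immediate once one notes that replacing $g$ by $rg$ with $r\in(0,\infty)$ scales every term of $\Phi$ by $r$, so $\Phi(rx,rg)=r\,\Phi(x,g)$; taking suprema gives $C(rx)=r\,C(x)$, whence $C(rx)\le r^{1}C(x)$ and $C$ is positively sub-homogeneous with exponent $c=1$.

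The step I expect to be the main obstacle is the exact equivalence $C(x)\ge 0\iff \exists\,g\ \text{with}\ \Phi(x,g)\ge 0$, because a priori the value $C(x)$ could equal $0$ without being attained. To close this gap I would show the supremum is attained by a compactness argument. Since $\Phi(x,g)$ depends on $g$ only through the distances $|g(i)-g(j)|$, it is translation invariant, so I may normalize $g(0)=0$. Along a sequence $g_n$ with $\Phi(x,g_n)\to C(x)$, eventually $\Phi(x,g_n)\ge C(x)-1$, which forces $|g_n(i)-g_n(i+1)|\le x_{\{i,i+1\}}-C(x)+1$ for every $i$; summing along the cycle bounds every $|g_n(i)|$, so the normalized $g_n$ lie in a compact subset of $(\rr^2)^m$. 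A convergent subsequence together with continuity of $g\mapsto\Phi(x,g)$ yields a maximizer $g^{\ast}$ with $\Phi(x,g^{\ast})=C(x)$, and hence $C(x)\ge 0$ exactly when some $g$ achieves $\Phi(x,g)\ge 0$. Combining this with the first paragraph, for every metric space $(X,d)$ the $(m,C)$-metric inequality (quantified over all $m$-tuples, equivalently over all maps $\zz/m\zz\to X$) holds if and only if every $m$-cycle of $X$ admits a Euclidean comparison configuration, i.e.\ if and only if $(X,d)$ satisfies the $\cycl_m(0)$ condition. This is the desired representation, with $C$ continuous and positively sub-homogeneous.
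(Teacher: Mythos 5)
Your construction is essentially the paper's own: both define $C(x)$ as the supremum over configurations $g:\zz/m\zz\to\rr^2$ of the minimum of the edge defects $x_{\{i,i+1\}}-|g(i)-g(i+1)|$ and the chord defects $|g(i)-g(j)|-x_{\{i,j\}}$, and both derive positive sub-homogeneity (in fact homogeneity of degree $1$) from the substitution $g\mapsto rg$. The only difference is one of rigor, in your favor: the paper simply asserts that $C(x)\ge 0$ yields a map $g$ satisfying both families of inequalities, whereas you correctly identify this attainment question as the delicate point and settle it by normalizing $g(0)=0$ and extracting a maximizer from a bounded maximizing sequence, and your uniform Lipschitz argument likewise substantiates the continuity of $C$, which the paper states without proof.
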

\begin{proof}
For a map 
$g\colon  \zz/m\zz\to \rr^2$, 
we define two functions 
$C_{1, g}, C_{2, g}:\rr^{\binom{m}{2}}\to \rr$ 
by 
\begin{align*}
	&C_{1, g}(x)=
	\min_{i\in \zz/m\zz}\{x_{i, i+1}-d_{M(0)}(g(i), g(i+1))\}, \\
	&C_{2, g}(x)=
	\min_{i, j\in\zz/m\zz,\  i-j\neq \pm1 }\{d_{M(0)}(g(i), g(j))-x_{i, j}\}. 
\end{align*}
We define a function  
$C\colon \rr^{\binom{m}{2}}\to \rr$ 
by 
\[
C(x)=
	\sup_{g:\zz/m\zz\to M(0)}
	\{
	C_{1, g}(x), \ 
	C_{2,  g}(x)
	\}.
\]
Then 
$C$ 
is continuous. 
For every 
$r\in (0, \infty)$ 
we have 
\begin{align*}
C(r\cdot x)&=\sup_{g:\zz/m\zz\to M(0)}
	\{
	C_{1, g}(r\cdot x), \ 
	C_{2,  g}(r\cdot x)
	\}\\
	&=
	r\cdot \sup_{g:\zz/m\zz\to M(0)}
	\{
	C_{1, g/r}(r\cdot x), \ 
	C_{2,  g/r}(r\cdot x)
	\}\\
	&=
	r\cdot \sup_{g:\zz/m\zz\to M(0)}
	\{
	C_{1, g}( x), \ 
	C_{2 g}(x)
	\}. 
\end{align*}
Thus the function 
$C$ 
is  positively sub-homogeneous.

If 
 $m$-many 
  points 
$a_{1}, \dots, a_{m}$ 
in 
$X$ 
satisfy 
 $C(\{d(a_i, a_j)\}_{i\neq j})\ge 0$, 
 then 
there exists a map 
$g\colon  \zz/m\zz\to M(0)$ 
such that 
$C_{1, g}(\{d(a_i, a_j)\}_{i \neq j})\ge 0$ 
and 
$C_{2, g}(\{d(a_i, a_j)\}_{i \neq j})\ge 0$. 
These two inequalities are equivalent to the conditions 
\ref{item:gcyc11018}
and 
\ref{item:gcyc21018} 
in the $\cycl_m(0)$
 condition, respectively. 
Therefore the 
$\cycl_m(0)$
 condition is equivalent to the 
 $(m, C)$-metric inequality. 
\end{proof}

\subsection{Gromov hyperbolicity}

Gromov 
\cite{MR919829}
 introduced the notion of 
the Gromov hyperbolicity. 
We say that 
$(X, d)$ 
is \emph{Gromov hyperbolic} if 
there exists 
$\delta\in [0, \infty)$ 
such that for all four points 
$a_{1}, a_{2}, a_{3}, a_{4}$ 
in  
$X$ 
we have 
\begin{multline*}
	d(a_{1}, a_{3})+d(a_{2}, a_{4})\\
	\le \max\{d(a_{1}, a_{2})+d(a_{3}, a_{4}), d(a_{1}, a_{4})+d(a_{2}, a_{3})\}+2\delta, 
\end{multline*}

\begin{prop}
 The Gromov hyperbolicity on metric spaces  is equivalent to a transmissible parameter. 
\end{prop}
\begin{proof}
Let
 $Q=\qq_{\ge 0}$. 
 For each 
 $\delta\in Q$, 
define a function 
$g_{\delta}\colon  \rr^{\binom{4}{2}}\to \rr$ 
by 
\[
	g_{\delta}(x)=
\max\{x_{1, 2}+x_{3, 4},  x_{1, 4}+x_{2, 3}\}+2\delta-(x_{1, 3}+x_{2, 4}). 
\]
We also  define a map  
$F\colon Q\to \mathcal{F}(\rr)$ 
by 
$F(\delta)=[0, \infty)$. 
Define a map 
$G\colon  Q\to \mathcal{P}^*(\nn)$ 
by 
$G(\delta)=\{4\}$. 
For each  metrizable space
 $X$,  
we define a map
$\phi^{\delta, X}\colon  
\seq(\{4\}, X)\times \{1\}\times \met{X}\to \rr$ 
by 
\[
	\phi^{\delta, X}(\{a_{i}\}_{i=1}^{n}, 1, d)=g_{\delta}(\{d(a_{i}, a_{j})\}_{i\neq j}). 
\]
Let 
$\mathfrak{G}=(Q, \rr,  F, G, \{1\}, \phi)$. 
Then 
$\mathfrak{G}$ 
is a transmissible parameter. 
Under this notations, 
we observe that 
$(X, d)$ is Gromov hyperbolic 
if and only if 
it satisfies the 
$\mathfrak{G}$-transmissible property. 
\end{proof}

Since for every metrizable space 
$X$ 
the set of all bounded metrics in 
$\met{X}$ 
is open in
$\met{X}$,  
and since every bounded metric space is Gromov hyperbolic, 
we obtain the following:
\begin{prop}\label{prop:Gro}
The Gromov hyperbolicity on metric spaces  is 
not equivalent to any transmissible  property 
with a singular transmissible parameter. 
\end{prop}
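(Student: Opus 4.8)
The statement to prove is Proposition \ref{prop:Gro}: the Gromov hyperbolicity is \emph{not} equivalent to any $\mathfrak{G}$-transmissible property arising from a singular transmissible parameter. Let me plan how to establish this.

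The plan is to argue by contradiction using the hint provided in the preceding sentence. Suppose, for contradiction, that there exists a singular transmissible parameter $\mathfrak{G}$ whose $\mathfrak{G}$-transmissible property coincides with Gromov hyperbolicity. The key tool is Lemma \ref{lem:sing}: since $\mathfrak{G}$ is singular, there exists an $(\omega_0+1)$-metric space $(L, d_L)$ with arbitrarily small diameter satisfying the \emph{anti}-$\mathfrak{G}$-transmissible property. In particular, I can choose such an $(L, d_L)$ with $\delta_{d_L}(L) \le 1$, so that $(L, d_L)$ is a bounded metric space.

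The crux of the argument is the two facts flagged in the excerpt: every bounded metric space is Gromov hyperbolic, and Gromov hyperbolicity was assumed to equal the $\mathfrak{G}$-transmissible property. Thus $(L, d_L)$, being bounded, is Gromov hyperbolic, and therefore satisfies the $\mathfrak{G}$-transmissible property. But $(L, d_L)$ was produced precisely to satisfy the anti-$\mathfrak{G}$-transmissible property, which is the negation of the $\mathfrak{G}$-transmissible property. This is a direct contradiction. The reason every bounded metric space is Gromov hyperbolic is immediate: if $\delta_d(X) \le C$, then for any four points $a_1, a_2, a_3, a_4$ one has $d(a_1,a_3) + d(a_2,a_4) \le 2C$, which is bounded above by $\max\{d(a_1,a_2)+d(a_3,a_4),\, d(a_1,a_4)+d(a_2,a_3)\} + 2\delta$ as soon as $\delta \ge C$, so the defining inequality holds with $\delta = C$.

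I expect the main (though modest) obstacle to be purely bookkeeping: making precise that ``satisfies the anti-$\mathfrak{G}$-transmissible property'' is genuinely the logical negation of ``satisfies the $\mathfrak{G}$-transmissible property,'' so that no metric space can satisfy both. This is exactly the content of the definition of the anti-transmissible property, so the tension is resolved by simply invoking that definition. A secondary subtlety is that Lemma \ref{lem:sing} is stated for a fixed $\epsilon$; I would apply it with $\epsilon = 1$ (or any finite value) to guarantee boundedness, which is all that is needed. No quantitative estimates beyond the trivial diameter bound are required, so the proof is short once the contradiction is set up correctly.
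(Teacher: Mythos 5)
Your proof is correct, but it takes a different route from the paper's. You argue directly at the level of metric spaces: assuming a singular transmissible parameter $\mathfrak{G}$ equivalent to Gromov hyperbolicity, you invoke Lemma \ref{lem:sing} with $\epsilon = 1$ to produce a bounded $(\omega_0+1)$-metric space satisfying the anti-$\mathfrak{G}$-transmissible property, hence non-hyperbolic, contradicting the fact that every bounded metric space is hyperbolic (with $\delta$ equal to the diameter). The paper instead works in the space of metrics: it notes that the set of bounded metrics is a non-empty \emph{open} subset of $\met(X)$ consisting entirely of hyperbolic metrics, so if hyperbolicity had a singular parameter, Theorem \ref{thm:trans} would make the non-hyperbolic metrics dense in $\met(X)$ for a non-discrete metrizable $X$, and density fails on that open set. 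Both arguments pivot on the same key fact (bounded $\Rightarrow$ hyperbolic, while singularity forces arbitrarily small-diameter counterexamples); yours is more elementary, needing only Lemma \ref{lem:sing} and bypassing the interpolation machinery behind Theorem \ref{thm:trans}, whereas the paper's is a one-liner once that theorem and the openness of bounded metrics are in hand, and it highlights the interplay between transmissible properties and the topology of $\met(X)$ that the paper is developing. One small point of care in your write-up is handled correctly: you explicitly note that the anti-$\mathfrak{G}$-transmissible property is by definition the negation of the $\mathfrak{G}$-transmissible property, which is exactly what makes the contradiction immediate.
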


\section{Local Transmissible  properties}\label{sec:loctrans}
In this section, 
we prove Theorem 
\ref{thm:loctrans}. 
The following lemma plays  a key role in the proof of Theorem \ref{thm:loctrans}. 
\begin{lem}\label{lem:Baire}
For 
every  second-countable  locally compact metrizable space 
$X$, 
the space 
$\met{X}$
 is  completely metrizable. 
 In particular,  it is Baire. 
\end{lem}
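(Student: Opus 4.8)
The plan is to realize $(\met(X),\mathcal{D}_X)$ as a $G_\delta$ subset of a completely metrizable space and then invoke Lemma \ref{lem:gdelta}. First I would decompose $\met(X)$ into its \emph{galaxies}: for $d_0\in\met(X)$ put $\met_{d_0}(X)=\{\,d\in\met(X)\mid \mathcal{D}_X(d,d_0)<\infty\,\}$. Each galaxy is clopen in $\met(X)$, the galaxies partition $\met(X)$, and $\met(X)$ carries the corresponding direct sum topology; since a topological sum of Baire spaces is Baire, it suffices to prove that each galaxy is a Baire space. Fixing $d_0$, the translation $d\mapsto d-d_0$ maps $\met_{d_0}(X)$ into the Banach space $V$ of bounded continuous symmetric functions on $X^2$ with the supremum norm, and on the galaxy the metric $\mathcal{D}_X$ agrees with the induced sup-norm metric. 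As $V$ is complete, it remains to show that the image of $\met_{d_0}(X)$ is $G_\delta$ in $V$.

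The membership $d_0+g\in\met(X)$ splits into three requirements on the continuous function $d=d_0+g$: that $d$ be a pseudometric (nonnegativity, vanishing on the diagonal, and the triangle inequality), that $d$ separate points, and that $d$ generate the topology of $X$. The pseudometric conditions are preserved under uniform limits, so they cut out a \emph{closed} subset of $V$. For separation I would fix a countable base $\{U_n\}$ of relatively compact open sets (available since $X$ is second countable and locally compact) and note that a continuous pseudometric separates points if and only if $\operatorname{dist}_d(\overline{U_n},\overline{U_{n'}})>0$ for every pair with $\overline{U_n}\cap\overline{U_{n'}}=\emptyset$; since each $d\mapsto\operatorname{dist}_d(\overline{U_n},\overline{U_{n'}})$ is $1$-Lipschitz in the sup-norm, this is a countable intersection of open conditions, hence $G_\delta$.

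The crucial and most delicate step is to show that the condition ``$d$ generates the topology of $X$'' is $G_\delta$, and this is where local compactness is essential. Since $d$ is continuous, the metric topology $\tau_d$ is always coarser than $\tau_X$; moreover, a continuous injection of a compact space into a Hausdorff space is a homeomorphism onto its image, so $\tau_d$ and $\tau_X$ automatically coincide on every compact subset. Consequently the two topologies can differ only ``at infinity'': fixing a compact exhaustion $K_1\subseteq\operatorname{int}K_2\subseteq\cdots$ with $\bigcup_m K_m=X$ and setting $A_d(x)=\lim_{m\to\infty}\operatorname{dist}_d(x,X\setminus K_m)$ (the $d$-distance from $x$ to infinity), I would prove that a separating continuous pseudometric $d$ generates $\tau_X$ if and only if $A_d(x)>0$ for every $x\in X$. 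The remaining task is to turn this pointwise positivity (a priori an uncountable conjunction) into countably many open conditions. For fixed $x$ the function $d\mapsto A_d(x)$ is $1$-Lipschitz in the sup-norm, and for fixed $d$ the function $x\mapsto A_d(x)$ is lower semicontinuous; since a lower semicontinuous function attains its infimum on a compact set, for each $n$ the quantity $B_n(d)=\inf_{x\in\overline{U_n}}A_d(x)$ is again $1$-Lipschitz in $d$, and $\{\,d\mid A_d>0 \text{ on } X\,\}=\bigcap_n\{\,d\mid B_n(d)>0\,\}$ is $G_\delta$. Intersecting the three $G_\delta$ (and closed) conditions exhibits $\met_{d_0}(X)$ as a $G_\delta$ subset of $V$, and Lemma \ref{lem:gdelta} then makes each galaxy, and hence $\met(X)$, a Baire space. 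I expect the verification of the distance-to-infinity criterion $A_d>0$ and its reduction to the compact sets $\overline{U_n}$ to be the main obstacle, as it is exactly here that second countability and local compactness enter.
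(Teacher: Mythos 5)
Your proposal is correct and takes essentially the same route as the paper: both realize $\met(X)$ as a $G_\delta$ subset of a completely metrizable space of continuous functions on $X^2$ (the paper uses all of $C(X^2)$ with a truncated uniform metric, you translate each galaxy into the Banach space of bounded continuous symmetric functions) and then invoke Lemma \ref{lem:gdelta}. The membership characterization is also the same three-part one --- continuous pseudometric, positivity off the diagonal, and positive distance to infinity along a compact exhaustion --- so the remaining differences (galaxy decomposition versus metric truncation, separation via disjoint basis closures versus the sets $E_n$, your function $A_d$ versus the sets $L_n$) are organizational rather than substantive.
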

\begin{proof}
Let 
$C(X^{2})$ 
be the set of all real-valued continuous functions on $X^{2}$.
We define a metric 
$\mathcal{E}$ 
on
 $C(X^{2})$ 
by 
\begin{equation*}\label{eq:E}
\mathcal{E}(f, g)=\min\left\{1,\sup_{(x, y)\in X^{2}}|f(x, y)-g(x, y)|\right\}. 
\end{equation*}
Note that
the metric 
$\mathcal{E}|_{\met{X}^{2}}$ 
on 
$\met{X}$  
generates the same topology as  
$\metdis_{X}$ 
on 
$\met{X}$, 
which coincides  with the uniform topology.

Note that  the space 
$(C(X^{2}), \mathcal{E})$ 
is completely metrizable. 
By Lemma 
\ref{lem:gdelta}, 
to prove the lemma, 
it suffices to show that 
$\met{X}$ 
is 
$G_{\delta}$ 
in  
$(C(X^{2}), \mathcal{E})$.
We denote by
 $\yopset$ 
 the set of all 
$f\in C(X^{2})$ 
such that 
\begin{enumerate}
	
	\item for every 
	$x\in X$ 
	we have 
	$f(x, x)=0$;
	
	\item 
	for all
	 $x, y\in X$ 
	 we have 
	 $f(x, y)=f(y, x)$
	 and 
	 $f(x, y)\ge 0$;
	 
	\item for every triple 
	$x, y, z\in X$
	 we have  $
	 f(x, y)\le f(x, z)+f(z, y)$. 
\end{enumerate}
Namely, 
$\yopset$ 
is the set of all 
continuous pseudo-metrics on 
$X$. 
Note that 
$\yopset$ 
is 
a closed subset in  the metric space 
$(C(X^{2}), \mathcal{E})$. 
Since all closed subsets 
of a metric space are 
$G_{\delta}$ 
in the whole space, 
the set 
$\yopset$ 
is 
$G_{\delta}$ 
in 
$(C(X^{2}), \mathcal{E})$.

Using the assumption that 
$X$ 
is second-countable and  locally compact, 
now  we take a sequence 
$\{\yodset{n}\}_{n\in \nn}$ 
of compact subsets of 
$X^{2}$ 
with 
$\bigcup_{n\in \nn}\yodset{n}=X^{2}\setminus \Delta_{X}$, 
where 
$\Delta_{X}$ 
is the diagonal set of 
$X^{2}$, 
and take a sequence 
$\{\yokset{n}\}_{n\in \nn}$ 
of compact subsets of 
$X$ 
with 
$\yokset{n}\subset \nai(\yokset{n+1})$ 
and 
$\bigcup_{n\in \nn} \yokset{n}=X$, 
where 
``$\nai$''
means the interior.

For every 
$n\in \nn$,  
let
 $\yolset{n}$ 
be the set of all 
$f\in C(X^{2})$ 
for which 
there exists
$c\in (0, \infty)$ 
such that 
for each  
$s>n$ 
we have 
\[
\inf\{\, f(x, y)\mid x\in \yokset{n}, \  y\in X\setminus \yokset{s}\, \} >c. 
\]

For each 
$n\in \nn$,  
let 
$\yoeset{n}$ 
be 
the set of all 
$f\in C(X^{2})$ 
such that
for each 
$(x, y)\in \yodset{n}$ 
we have 
$0<f(x, y)$.  
Note that each 
$\yolset{n}$ 
and each 
$\yoeset{n}$ 
are open subsets 
of 
$(C(X^{2}), \mathcal{E})$. 

We next prove that
\[
	\met{X}=
	\yopset
	\cap 
	\left(
	\bigcap_{n\in \nn} \yolset{n}
	\right)
	\cap  
	\left(
	\bigcap_{n\in \nn}\yoeset{n}
	\right). 
\]
To prove
$\met{X}\subset
	\yopset
	\cap 
	\left(
	\bigcap_{n\in \nn} \yolset{n}
	\right)
	\cap  
	\left(
	\bigcap_{n\in \nn}\yoeset{n}
	\right)$, 
take 
$d\in \met{X}$. 
Since 
$d$
 is a metric on 
 $X$, 
we have
 $d\in \yopset$ 
and  
$d\in \bigcap_{n\in \nn}\yoeset{n}$. 
To show 
$d\in \bigcap_{n\in \nn}\yolset{n}$,  
take an arbitrary number 
$n\in \nn$. 
Since 
$\yokset{n}\subset \nai(\yokset{n+1})$ 
and 
since 
$d\in \met{X}$, 
for every 
$s> n$ 
we have 
\begin{align*}
	0&<d(\yokset{n}, X\setminus \nai(\yokset{n+1}))
	\le
	d(\yokset{n}, X\setminus \yokset{n+1})
	\\
	&\le
	d(\yokset{n}, X\setminus \yokset{s}).
\end{align*}
Thus  
$d\in \bigcap_{n\in \nn}\yolset{n}$. 
Hence we obtain
\[
	\met{X}
	\subset 
	\yopset
	\cap 
	\left(
	\bigcap_{n\in \nn} \yolset{n}
	\right)
	\cap
	\left(
	\bigcap_{n\in \nn}\yoeset{n}
	\right). 
\]
Next we  take 
$d\in \yopset\cap \left(\bigcap_{n\in \nn} \yolset{n}\right)\cap  \left(\bigcap_{n\in \nn}\yoeset{n}\right)$. 
Since 
$d\in \yopset\cap \left( \bigcap_{n\in \nn}\yoeset{n}\right)$, 
the function 
$d$ 
is  continuous on 
$X^{2}$ 
and it is  a metric on 
$X$. 
Fix 
$e\in \met{X}$. 
We show that  
the metric  
$d$ 
is topologically equivalent to
 $e$. 
Since 
$d$ 
is continuous on 
 $X^{2}$, 
the metric
 $d$ 
 generates a weaker topology than that of
  $(X, e)$. 
Namely, 
if 
$\lim_{n\to \infty}x_{n}= a$ 
in 
$(X, e)$, 
then 
$\lim_{n\to \infty}x_{n}=a$ 
in 
$(X, d)$. 
Assume next  that 
$\lim_{n\to \infty}x_{n}= a$
 in 
 $(X, d)$. 
Our purpose is 
to prove 
that 
$\lim_{n\to \infty}x_{n}\to a$
in 
$(X, e)$. 
Since 
$\{\yokset{n}\}_{n\in \nn}$ 
is a covering of 
$X$, 
there exists 
$s\in \nn$
such that 
$a\in \yokset{s}$. 
For the sake of contradiction, 
suppose that 
there exist infinitely many 
$i\in \nn$ 
with
 $(X\setminus \yokset{i})\cap \{\, x_{n}\mid {n\in \nn}\, \}\neq \emptyset$. 
Then we have
\[
	\liminf_{i\to \infty}d(\yokset{s}, X\setminus \yokset{i})
	\le 
	\lim_{j\to \infty}d(a, x_{j})
	=
	0.
\] 
This contradicts the fact that 
$d\in \bigcap_{n\in \nn}\yolset{n}$. 
Hence there exists 
a sufficiently large number 
$m\in \nn$ 
such that  
the sequence 
$\{x_{n}\}_{n\in \nn}$ 
is contained in 
$\yokset{m}$. 
 We may assume that 
 $s\le m$, which implies that 
 $a\in \yokset{m}$. 
By the compactness of 
$\yokset{m}$ and by 
the fact that 
$(X, d)$ 
is Hausdorff, 
the map 
$1_{\yokset{m}}\colon (\yokset{m}, e)\to (\yokset{m}, d)$
becomes a homeomorphism. 
Thus 
the restricted metrics 
$e|_{\yokset{m}^{2}}$ and 
$d|_{\yokset{m}^{2}}$ 
generate the same topology on 
$\yokset{m}$. 
Since  
$\lim_{n\to\infty}x_{n}=a$ in $(X, d)$
and since 
$\{x_{n}\}_{n\in \nn}$
 is 
 contained in 
$\yokset{m}$, 
we see that 
$\lim_{n\to\infty}x_{n}=a$
in 
$(X, e)$. 

As a result, 
the metric 
$d$
 generates the same topology as 
 $e$,  
and hence 
\[
	\met{X}
	\supset 
	\yopset
	\cap 
	\left(
	\bigcap_{n\in \nn} \yolset{n}
	\right)
	\cap  
	\left(
	\bigcap_{n\in \nn}\yoeset{n}
	\right). 
\]
Therefore we conclude that 
$\met{X}$ 
is a 
$G_{\delta}$ 
subset of 
$(C(X^{2}), \mathcal{E})$. 
\end{proof}

\begin{proof}[Proof of Theorem \ref{thm:loctrans}]
Let 
$X$ 
be a second-countable,  
locally compact locally non-discrete space, 
and let 
$\mathfrak{G}$ 
be a  singular transmissible parameter. 
Let 
$\mathfrak{G}=(Q, P, F, G, Z, \phi)$, 
and 
let 
$S$ 
be the set of all metrics 
$d\in \met{X}$ 
for which 
$(X, d)$ 
satisfies 
the local 
anti-$\mathfrak{G}$-transmissible property. 
Take
 a countable open base 
 $\{U_{i}\}_{i\in \nn}$ 
 of $X$,  and take 
 a family 
 $\{R_{i}\}_{i\in \nn}$
 of 
$(\omega_0+1)$-subspaces of 
$X$ with 
$R_{i}\subset U_i$. 
Since 
$\{U_{i}\}_{i\in \nn}$ 
is  an open base of 
$X$, 
according to 
 Lemma
 \ref{lem:here}, 
we have
\[
	S
	=
	\bigcap_{i\in \nn}
	\bigcap_{q\in Q}
	\bigcup_{z\in Z}
	\bigcup_{a\in \seq(G(q), U_{i})}
	S(X, \mathfrak{G}, q, a, z). 
\]
Corollary  \ref{cor:open} 
implies that 
$S$ 
is 
$G_{\delta}$ 
in 
$\met{X}$. 
For every 
$i\in \nn$,  
the set 
\[
	\bigcap_{q\in Q}
	\bigcup_{z\in Z}
	\bigcup_{a\in \seq(G(q), U_i)}
	S(X, \mathfrak{G}, q, a, z)
\]
contains 
$T(X, R_i, \mathfrak{G})$.  
Proposition \ref{prop:dense} 
implies that 
each 
$T(X, R_{i}, \mathfrak{G})$ 
is dense in 
$\met{X}$. 
By Lemma \ref{lem:Baire},  
the space 
$\met{X}$ 
is Baire, 
and hence 
$S$ 
is dense 
$G_{\delta}$ 
in 
$\met{X}$. 
This completes the proof.
\end{proof}




\bibliographystyle{myplaindoidoi}
\bibliography{bibtexmet/bibmet.bib}

\end{document}